\newtheorem{theorem}{Theorem}[section]
\newtheorem{lemma}[theorem]{Lemma}
\newtheorem{corollary}[theorem]{Corollary}
\newtheorem{assumption}[theorem]{Assumption}
\theoremstyle{definition}
\newtheorem{remark}[theorem]{Remark}
\newtheorem{example}[theorem]{Example}
\newcommand{\R}{\mathbb{R}}
\newcommand{\N}{\mathbb{N}}
\renewcommand{\epsilon}{\varepsilon}
\newlength{\dhatheight}
\newcommand{\doublehat}[1]{%
    \settoheight{\dhatheight}{\ensuremath{\hat{#1}}}%
    \addtolength{\dhatheight}{-0.35ex}%
    \hat{\vphantom{\rule{1pt}{\dhatheight}}%
    \smash{\hat{#1}}}}
\newcommand{\as}{\mbox{-a.s.}}
\DeclareMathOperator{\sgn}{sign}
\DeclareMathOperator{\supp}{supp}
\newcommand{\1}{\mathbf{1}}
\newcommand{\cB}{\mathcal{B}}
\newcommand{\cD}{\mathcal{D}}
\newcommand{\cM}{\mathcal{M}}
\newcommand{\cP}{\mathcal{P}}
\newcommand{\cX}{\mathcal{X}}
\keywords{Martingale Schr\"odinger bridge, Potentials, Duality}
\subjclass[2020]{60G42, 90C05, 94A17}
\begin{document}
\title{On the Martingale Schr\"odinger Bridge between Two Distributions}

\date{\today}

\author{Marcel Nutz}
\thanks{MN acknowledges support by NSF Grants DMS-1812661,  DMS-2106056.}
\address[MN]{Departments of Statistics and Mathematics, Columbia University,  1255 Amsterdam Avenue, New York, NY 10027, USA}
\email{mnutz@columbia.edu}

\author{Johannes Wiesel}
\thanks{JW acknowledges support by NSF Grant DMS-2345556.}
\address[JW]{Department of Mathematics, University of Copenhagen, Universitetsparken 5, 2100 Copenhagen, Denmark}
\email{wiesel@math.ku.dk}

\maketitle

\vspace{-2em}

\begin{abstract}
We study a martingale Schr\"odinger bridge problem: given two probability distributions on the real line, find their martingale coupling with minimal relative entropy. Our main result provides Schr\"odinger potentials for this coupling. Namely, under certain conditions, the log-density of the optimal coupling is given by a triplet of real functions representing the marginal and martingale constraints. The potentials are also described as the solution of a dual problem.
\end{abstract}

\section{Introduction}

The Schr\"odinger bridge problem originates in a question of Schr\"odinger~\cite{Schrodinger.31} about the most likely evolution of a particle system from a distribution~$\mu$ into another distribution~$\nu$. This dynamic problem can be reduced to the \emph{static} Schr\"odinger bridge problem
\begin{align}\label{eq:staticSB}
  \inf_{\pi\in\Pi(\mu,\nu)} H(\pi|R)
\end{align} 
where $H(\cdot|R)$ denotes relative entropy with respect to the reference measure~$R$ and $\Pi(\mu,\nu)$ denotes the set of couplings; i.e., joint distributions with marginals $\mu$ and $\nu$. See \cite{Follmer.88,Leonard.14} for surveys. The problem~\eqref{eq:staticSB} includes the entropically regularized optimal transport (EOT) problem
\begin{align}\label{eq:EOTintro}
  \inf_{\pi\in\Pi(\mu,\nu)} \int c(x,y) \, \pi(dx,dy) +  H(\pi|\mu\otimes\nu)
\end{align} 
where $c$ is a given cost function, by choosing $dR\propto e^{-c}d(\mu\otimes\nu)$. %
EOT is a regularized version of Monge--Kantorovich optimal transport. Thanks to its computational and statistical properties, EOT has been the subject of hundreds of works in the last few years (see, e.g., the monograph~\cite{PeyreCuturi.19} for extensive references).  A central result in EOT states that a coupling~$\pi$ is optimal for~\eqref{eq:EOTintro} if and only if its density is of the form
\begin{align}\label{eq:EOTpotentials}
\frac{d\pi}{d(\mu\otimes\nu)}(x,y)=e^{f(x)+g(y)-c(x,y)}
\end{align} 
for some functions $f,g$ which are called the \emph{Schr\"odinger potentials} and can also be seen as the solution of the dual problem of~\eqref{eq:EOTintro}. Specifically, $(f,g)$ exist in $L^{1}(\mu)\times L^{1}(\nu)$ if $c\in L^{1}(\mu\otimes\nu)$; see, e.g., \cite{Nutz.20}. Potentials are key objects for most aspects of EOT; we give just three examples: On the computational side, potentials are the output of the Sinkhorn--Knopp algorithm, the Gauss--Seidel iteration commonly used to compute EOT~\cite{PeyreCuturi.19}. On the statistical side, sample complexity bounds of EOT are based on growth properties of the potentials \cite{genevay2019sample}. In a mathematical application, regularity properties of the potentials are used to prove a generalization of Caffarelli's contraction theorem~\cite{ChewiPooladian.23}. The rigorous construction of the potentials is surprisingly delicate and was achieved for general costs by~\cite{FollmerGantert.97, RuschendorfThomsen.93, RuschendorfThomsen.97}. Counterexamples in those references highlight some pitfalls in earlier results, related to integrability and convergence issues. 

In this paper we study a \emph{martingale Schr\"odinger bridge} problem, first introduced by~\cite{HenryLabordere.19} in financial mathematics. Financially, the martingale constraint for couplings corresponds to a risk-neutral pricing model that is calibrated to given marginals. The marginals, in turn, represent observed prices of call options. Starting with \cite{BeiglbockHenryLaborderePenkner.11,GalichonHenryLabordereTouzi.11,Hobson.98}, the  optimal transport problem with martingale constraint (\emph{martingale optimal transport}) has been studied in great detail over the last decade; see, e.g., \cite{BeiglbockJourdainMargheritiPammer.22, Wiesel.23} for extensive references. 
The martingale Schr\"odinger bridge problem can be seen as entropically regularized martingale optimal transport. %
Specifically, let $\mu,\nu$ be distributions on $\R$ with finite first moment. A coupling $\pi\in\Pi(\mu,\nu)$ is a martingale if its disintegration $\pi(dx,dy)=\mu(dx)\otimes\pi_{x}(dy)$ satisfies $\int (y-x)\,\pi_x(dy)=0$ for $\mu$-a.e.\ $x\in\R$. Let $\mathcal{M}(\mu,\nu)$ be the set of martingale couplings of~$\mu$ and~$\nu$, then we study the minimization
\begin{align}\label{eq:MSBintro}
\inf_{\pi\in \mathcal{M}(\mu,\nu)} H(\pi|\mu\otimes\nu).
\end{align}
This is connected to the EOT problem~\eqref{eq:EOTintro}: one may suspect that~\eqref{eq:MSBintro} is equivalent to an EOT problem with a suitably chosen cost function~$c$; namely, a Lagrange multiplier for the martingale constraint.

Assuming that $\mathcal{M}(\mu,\nu)\neq\emptyset$, standard arguments show that~\eqref{eq:MSBintro} admits a unique optimizer~$\pi^{\ast}$. Our main result (Theorem~\ref{thm:main}) is that, under integrability and irreducibility conditions detailed below, this optimizer has a density of the form
\begin{align}\label{eq:densityFormIntro}
\frac{d\pi^*}{d(\mu\otimes\nu)}(x,y)=e^{f(x)+g(y)-h(x)(y-x)}
\end{align}
for some functions $f\in L^1(\mu)$, $g\in L^1(\nu)$, and $h\in L^{0}(\mu)$ with $h(x)(y-x)\in L^{1}(\pi)$ for all $\pi\in\mathcal{M}_{\text{fin}}(\mu,\nu):=\{\pi\in \mathcal{M}(\mu,\nu):\, H(\pi|\mu\otimes \nu)<\infty\}$.
These functions are unique (up to shift by an affine function) and constitute the potentials of our martingale Schr\"odinger bridge problem. Like in~\eqref{eq:EOTpotentials}, the functions $(f,g)$ can be seen as Lagrange multipliers for the marginal constraints $(\mu,\nu)$. The additional term $h(x)(y-x)$ in~\eqref{eq:densityFormIntro} is a Lagrange multiplier for the martingale constraint. We see that~\eqref{eq:MSBintro} is formally equivalent to the EOT problem~\eqref{eq:EOTintro} with cost $c(x,y)=h(x)(y-x)$. To be precise, the equivalence holds if $h(x)(y-x)$ has good integrability. However we will see that this can fail: in Example~\ref{ex:strictSeparationHelps}, the positive part of that function is not integrable under $\mu\otimes\nu$, and then the cost $c(x,y)=h(x)(y-x)$ does not fall within the setting generally covered by EOT theory. That may give a hint that the potentials cannot be constructed by simply invoking a standard result from EOT.

Existence of the potentials in the aforementioned sense yields a strong duality theorem (Corollary~\ref{co:duality}). Indeed, 
\begin{align*}
  &\inf_{\pi\in \mathcal{M}(\mu,\nu)} H(\pi|\mu\otimes\nu) \\
  &= \sup_{\tilde f,\tilde g,\tilde h} \int \tilde f(x)\,\mu(dx)+ \int \tilde g(y)\,\nu(dy) - \log\int e^{\tilde f(x)+\tilde g(y)-\tilde h(x)(y-x)}\,\mu(dx)\nu(dy)
\end{align*} 
where the supremum is taken over all $\tilde f\in L^1(\mu)$, $\tilde g\in L^1(\nu)$, and $\tilde h\in L^{0}(\mu)$ with $\tilde h(x)(y-x)\in  L^1(\pi)$ for all $\pi\in \mathcal{M}_{\text{fin}}(\mu,\nu)$. Moreover, the dual supremum is attained: the maximizers are precisely the potentials $(f,g,h)$, leading to the formula $\inf_{\pi\in \mathcal{M}(\mu,\nu)} H(\pi|\mu\otimes\nu)=\int f(x)\,\mu(dx)+ \int g(y)\,\nu(dy)$.

First statements about potentials for martingale Schr\"odinger bridges can be found in~\cite{HenryLabordere.19} for a dynamic setting, though those formal statements are not meant to be mathematically rigorous. Compared with the setting of classical Schr\"odinger bridges and EOT, the additional dual term $h(x)(y-x)$ due to the martingale constraint creates an interaction between the variables $x,y$ which turns out to be a substantial difficulty. This difficulty was first highlighted by a counterexample of~\cite{AcciaioLarssonSchachermayer.17}. The example exhibits an $L^{p}$-convergent sequence $g_{n}(y)-h_{n}(x)(y-x)$ with nice functions $g_{n},h_{n}$ where the limit is not of the form $g(y)-h(x)(y-x)$ with an integrable~$g$. The follow-up work~\cite{NutzWieselZhao.22a} showed that this stems from a loss of integrability in the passage to the limit. 
The results closest to the present paper are obtained in~\cite{NutzWieselZhao.22b} for a martingale Schr\"odinger 
bridge problem where only the second marginal~$\nu$ is given (indeed, a pier rather than a bridge). As there is no constraint on the first marginal, the dual takes the form $g(y)-h(x)(y-x)$ without the function~$f$ as in~\eqref{eq:densityFormIntro}. The approach in~\cite{NutzWieselZhao.22b} rests on the flexibility of manipulating the support of the first marginal, a nonstarter in the present setting where~$\mu$ is prescribed. A different type of result is obtained in~\cite{NutzWieselZhao.22a} by algebraic considerations. It implies the algebraic form of the log-density of $\pi^{*}$, but it yields functions that a priori are not integrable (in general, possibly even not measurable). As far as we could see, the result of~\cite{NutzWieselZhao.22a} does not help in establishing the present results.

Instead, we use a completely new approach. We first study an auxiliary problem where the delicate martingale constraint is replaced by a weaker constraint that is easier to handle. We show that the coupling $\hat\pi$ solving that relaxed problem has a density of the desired form~\eqref{eq:densityFormIntro}. Second, we show that $\hat\pi$ is actually a martingale, which implies that $\hat\pi=\pi^{*}$ is also the solution of the martingale Schr\"odinger bridge problem. As a result, the density of $\pi^{*}$ has the desired form. A more detailed outline of the approach can be found in Remark~\ref{rk:proofIdea}. 

A similar approach was adopted in the follow-up work \cite{ChenFanConforti.24} to the present paper, which shows convergence of Sinkhorn iterates of the martingale Schr\"odinger bridge problem. In particular, this establishes the existence of Schr\"odinger potentials. The setting of \cite{ChenFanConforti.24} includes the generalization to reference measures that are not necessarily of product type, which is equivalent to generalizing the problem formulation~\eqref{eq:MSBintro} to
\begin{align*}
\inf_{\pi\in \mathcal{M}(\mu,\nu)} \int c(x,y) \, \pi(dx,dy) + H(\pi|\mu\otimes\nu).
\end{align*}
On the other hand, the proof necessitates an assumption that is significantly more stringent than ours, namely that there exists $\bar{\pi}\in \mathcal{M}(\mu,\nu)$ with uniformly bounded log-density relative to $\mu\otimes \nu$.

Continuing with the broadly related literature, similar potentials also play an important role in~\cite{Guyon.20, Guyon.21} tackling the joint S\&P\,500/VIX calibration problem; here there are additional constraints pertaining to observing prices on VIX derivatives. The existence of integrable potentials is posited, but showing existence from first principles remains an open problem. To the best of our knowledge, the present result is the first to provide integrable potentials for a martingale Schr\"odinger bridge with more than one marginal. In a different direction, \cite{DeMarchHenryLabordere.19} develop a version of Sinkhorn's algorithm for martingale optimal transport using the martingale Schr\"odinger bridge as a regularized version of the optimal transport problem. See also~\cite{GuoObloj.19} for a related algorithm using a different relaxation. In another related direction, \cite{DoldiFrittelli.23, DoldiFrittelliRosazzaGianin.23} study an entropic martingale optimal transport problem different from the present one. Instead of minimizing entropy over couplings, these works consider a martingale version of the transport problem proposed in \cite{LieroMielkeSavare.18}. In that problem, the marginal constraints $\mu,\nu$ are relaxed into an entropic penalty, whereas the entropy of the coupling does not enter the picture.

The remainder of the paper is organized as follows. Section~\ref{se:main} details the problem formulation and states the main results. The proofs are reported in Section~\ref{se:proofs}, while Section~\ref{se:closing} concludes with an example illustrating a technical condition used in the main results.

\section{Main Results}\label{se:main}

We write $\cB(\cX)$ and $\mathcal{P}(\mathcal{X})$ for the collections of Borel sets and probability measures on~$\cX$, respectively. Measures are endowed with the weak topology induced by $C_b(\mathcal{X})$.
Let $\mu,\nu\in\mathcal{P}(\R)$ be probability measures on $\R$ with finite first moments and in convex order, where the latter is denoted by $\mu\preceq_c\nu$ and defined by $\int \varphi(x) \,\mu(dx)\leq \int \varphi(x) \,\nu(dx)$ for any convex function $\varphi:\R\to\R$. This implies that $\mu$ and $\nu$ have the same mean. Without loss of generality, we can then assume that the measures are centered,
\begin{align}\label{eq:centered}
  \int x\,\mu(dx)=\int y\,\nu(dy)=0.
\end{align} 
Their set of couplings is defined by 
\begin{align*}
\Pi(\mu,\nu)=\{\pi\in \mathcal{P}(\R\times \R): \pi(A\times \R)=\mu(A),\, \pi(\R\times A)=\nu(A) \text{ for all }A\in\cB(\R)\}.
\end{align*} 
For $\pi\in \Pi(\mu,\nu)$, we denote by $\pi_x$ the conditional probability distribution of $\pi$ given $x$, meaning that the Markov kernel $(\pi_x)_{x\in \R}$ satisfies $\pi(A\times B)=\int_A \pi_x(B)\,\mu(dx)$ for all $A,B\in\cB(\R)$. 
The set of martingale couplings is defined by
\begin{align*}
\mathcal{M}(\mu,\nu)= \left\{\pi\in \Pi(\mu,\nu): \int (y-x)\,\pi_x(dy)=0\quad \mu\text{-a.s.}\right\}.
\end{align*}
By Strassen's theorem \cite{Strassen.65},  $\mu\preceq_c\nu$ is equivalent to $\mathcal{M}(\mu,\nu)\neq \emptyset$. 
If $\mu=\delta_0$, then $\mathcal{M}(\mu,\nu)=\{\mu\otimes\nu\}$ and all our results are trivial. Hence we may assume without loss of generality that $\mu\neq\delta_0$, which in view of $\mu\preceq_c\nu$ also implies that  $\nu\neq\delta_0$.

We consider the martingale Schr\"odinger bridge problem 
\begin{align}\label{eq:opt}
\inf_{\pi\in \mathcal{M}(\mu,\nu)} H(\pi | \mu\otimes\nu),
\end{align}
where $H$ denotes the relative entropy
\begin{align*}
H(\pi |\mu\otimes\nu)=\begin{cases}
\int \log(d\pi/d(\mu\otimes\nu))\,d\pi\quad &\text{if } \pi\ll \mu\otimes\nu,\\
\infty \quad&\text{otherwise}.
\end{cases}
\end{align*}
For~\eqref{eq:opt} to be well-posed, we clearly require some $\pi\in \mathcal{M}(\mu,\nu)$  with $H(\pi | \mu\otimes\nu)<\infty$. A stronger condition is necessary for our main result~\eqref{eq:opt1} below: if~\eqref{eq:opt1} holds, then the coupling $\pi^{*}\in \mathcal{M}(\mu,\nu)$ is equivalent to $\mu\otimes\nu$, denoted $\pi^{*}\sim \mu\otimes \nu$. We state these necessary conditions as a standing assumption.

\begin{assumption}\label{ass:0} 
There exists $\bar{\pi}\in \mathcal{M}(\mu,\nu)$ such that $$\bar{\pi}\sim \mu\otimes \nu  \quad \text{and} \quad H(\bar{\pi}|\mu\otimes\nu)<\infty.$$
\end{assumption}

The existence of a martingale coupling equivalent to $\mu\otimes\nu$ can be characterized in terms of the marginals~$\mu$ and~$\nu$ as follows.

\begin{remark}\label{rk:irred}
Define the potential function $\varphi_{\rho}$ of $\rho\in\cP(\R)$ by $\varphi_{\rho}(x)=\int |x-y|\,\rho(dy)$. Then $\varphi_{\mu}\leq \varphi_{\nu}$ due to $\mu\preceq_c\nu$. Existence of a martingale coupling  $\bar{\pi}\sim\mu\otimes\nu$ is equivalent to a \emph{strict} inequality on an open interval of full $\mu$-measure. This condition is known in martingale optimal transport theory as irreducibility:  $(\mu,\nu)\in\cP(\R)\times\cP(\R)$ is called irreducible if the set $I:=\{\varphi_{\mu}< \varphi_{\nu}\}$ is connected and satisfies $\mu(I)=1$.

To see the equivalence, note that if $(\mu,\nu)$ is irreducible, then \cite[Lemma~3.3 and its proof]{BeiglbockNutzTouzi.15} show that there exists $\bar{\pi}\in \mathcal{M}(\mu,\nu)$  with $\bar\pi\sim \mu\otimes\nu$. Conversely, if $(\mu,\nu)$ is not irreducible, there exists $x\in\R$ such that $\varphi_{\mu}(x)=\varphi_{\nu}(x)$ and either $\mu((-\infty,x])>0$ and $\mu((x,\infty))>0$, or $\mu((-\infty,x))>0$ and $\mu([x,\infty))>0$.
  In the first case, the set $A:=(-\infty,x]\times (x,\infty)$ satisfies $\pi(A)=0$ for all $\pi\in\mathcal{M}(\mu,\nu)$ by \cite[Theorem~3.2]{BeiglbockNutzTouzi.15} but $(\mu\otimes\nu)(A)>0$, so that $\bar{\pi}$ cannot exist. The second case is analogous with $A:=(-\infty,x)\times [x,\infty)$.
\end{remark} 

Remark~\ref{rk:irred} shows that a strict inequality between the potential functions $\varphi_{\mu},\varphi_{\nu}$ is necessary for our main result to hold. A strict inequality means that the difference of the potential functions is uniformly bounded away from zero on compact subintervals of~$I$. Our proof of the main result uses a slightly stronger condition. Writing $[s_-, s_+]$ for the convex hull of the support of $\mu$, we require that either the difference is uniformly bounded away from zero on $[s_-, s_+]$ , or if the potentials touch at an endpoint of~$[s_-, s_+]$, then at least they should meet at a nonzero angle (which means that~$\nu$ has an atom at that point). This condition can be phrased succinctly as follows, where we write $\supp(\mu)$ for the support of~$\mu$.

\begin{assumption}\label{ass:1} 
Let $s_-=\inf(\supp(\mu))$ and $s_+=\sup(\supp(\mu))$ be the left and right endpoints of the support of~$\mu$. We assume that  $\nu((-\infty, s_-])>0$ and $\nu([s_+, \infty))>0$. 
\end{assumption}

If we denote by $t_{\pm}$ the endpoints of the support of $\nu$, then Assumption~\ref{ass:1} means that either $t_{+}>s_{+}$, or $t_{+}=s_{+}$ and $\nu(\{s_{+}\})>0$, and similarly for $s_{-}$. It follows that $\mu$ has bounded support, though this will not be used directly. In fact, Assumption~\ref{ass:1} will be used only a single time, to show uniform integrability of a certain sequence in Step~2 of the proof of Lemma~\ref{lem:hard}. We do not know if Theorem~\ref{thm:main} holds without Assumption~\ref{ass:1}. Example~\ref{ex:strictSeparationHelps} highlights that certain integrability issues arise when the potentials are not properly separated.

We can now state our main result.

\begin{theorem}[Potentials]\label{thm:main}
Under Assumptions \ref{ass:0} and \ref{ass:1}, there is a unique optimizer $\pi^\ast \in \mathcal{M}(\mu,\nu)$ of \eqref{eq:opt}. It is characterized within $\mathcal{M}(\mu,\nu)$ by having a density of the form
\begin{align}\label{eq:opt1}
\frac{d\pi^*}{d(\mu\otimes\nu)}(x,y)=e^{f(x)+g(y)-h(x)(y-x)}
\end{align}
for some functions $f\in L^1(\mu)$, $g\in L^1(\nu)$ and $h\in L^{0}(\mu)$ with $h(x)(y-x)\in  L^1(\pi)$ for all $\pi\in\mathcal{M}_{\text{fin}}(\mu,\nu):=\{\pi\in \mathcal{M}(\mu,\nu):\, H(\pi|\mu\otimes \nu)<\infty\}$ and $\mu(\{x: xh(x)\le 0\})=1$. We call any such triplet \emph{potentials} of~\eqref{eq:opt}.

The potentials are unique up to an affine shift. More precisely, $(f',g',h')$ are potentials if and only if there are constants $\alpha,\beta\in\R$ such that 
\begin{align*}
  f'(x) - f(x) = \alpha x + \beta \quad \mu\as, \qquad g(y)-g'(y)=\alpha y+\beta \quad \nu\as, \qquad  h-h'= \alpha \quad \mu\as
\end{align*}
\end{theorem}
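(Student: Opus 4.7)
The plan is to follow the two-step strategy outlined in Remark~\ref{rk:proofIdea}: first construct an auxiliary problem whose martingale constraint is replaced by a weaker condition amenable to classical Schr\"odinger/EOT duality, obtaining an optimizer $\hat\pi$ with density of the desired exponential form; then show that $\hat\pi$ is in fact a martingale, so that $\hat\pi = \pi^{*}$ by uniqueness. Before that, existence and uniqueness of $\pi^{*}$ in~\eqref{eq:opt} follow from standard arguments: with marginals fixed, $\Pi(\mu,\nu)$ is weakly compact by Prokhorov; $\mathcal{M}(\mu,\nu)$ is weakly closed under our first-moment assumption; and $H(\cdot\,|\,\mu\otimes\nu)$ is weakly lower semicontinuous and strictly convex, with Assumption~\ref{ass:0} ensuring the infimum is finite.

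The key difficulty in invoking EOT duality directly is that the would-be Lagrange multiplier $h(x)(y-x)$ attached to the martingale constraint need not be $\mu\otimes\nu$-integrable (as flagged by Example~\ref{ex:strictSeparationHelps}), so classical cost-based Schr\"odinger theory is not available off the shelf. I would therefore introduce a relaxed problem in which the martingale equality is replaced by a weaker requirement against a suitably chosen class of test functions (or a one-sided inequality), tailored so that existing results such as \cite{FollmerGantert.97,Nutz.20} apply. The output of this step should be an auxiliary optimizer $\hat\pi$ with density of the form $e^{\hat f(x)+\hat g(y)-\hat h(x)(y-x)}$, where $\hat f \in L^{1}(\mu)$, $\hat g \in L^{1}(\nu)$, and $\hat h$ is measurable with $\hat h(x)(y-x)$ integrable against every $\pi \in \mathcal{M}_{\text{fin}}(\mu,\nu)$.

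The main obstacle is then to upgrade $\hat\pi$ to a genuine martingale coupling. My strategy is to exploit first-order optimality of $\hat\pi$ for the relaxed problem: perturbing $\hat\pi$ toward any $\pi \in \mathcal{M}_{\text{fin}}(\mu,\nu)$ should produce inequalities which, combined with the explicit form of the density, force $\int (y-x)\,\hat\pi_{x}(dy) = 0$ $\mu$-a.s. Assumption~\ref{ass:1} is expected to enter precisely at this stage, delivering the uniform integrability that lets one pass to the limit in an approximating sequence (as the paper signals for Lemma~\ref{lem:hard}); without the separation of $\varphi_{\mu},\varphi_{\nu}$ at the endpoints of $\supp(\mu)$, control of $\hat h(x)(y-x)$ near the boundary of $\supp(\mu)$ could fail.

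Finally, uniqueness of the potentials up to affine shift is a short algebraic argument. Since $\pi^{*}\sim\mu\otimes\nu$, if $(f,g,h)$ and $(f',g',h')$ both represent the density, then
\begin{align*}
\bigl[f(x)-f'(x)+x(h(x)-h'(x))\bigr] + \bigl[g(y)-g'(y)\bigr] - (h(x)-h'(x))\,y = 0
\end{align*}
$\mu\otimes\nu$-a.e. Since $\nu\neq\delta_{0}$ has at least two distinct points in its support, evaluating at two such $y$'s forces $h-h'=\alpha$ constant $\mu$-a.s.; substituting back, separation of variables yields $g-g'=\alpha y+\beta$ $\nu$-a.s.\ and $f'-f=\alpha x+\beta$ $\mu$-a.s., with integrability of the affine shifts guaranteed by the finite first moments of $\mu$ and $\nu$.
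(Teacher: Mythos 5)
Your outline reproduces the paper's own roadmap (Remark~\ref{rk:proofIdea}), but the two steps that carry the entire weight of the theorem are left as declarations of intent, and the way you propose to fill them would not work as stated. You say the relaxed problem should be ``tailored so that existing results such as \cite{FollmerGantert.97,Nutz.20} apply,'' producing an optimizer with density $e^{\hat f(x)+\hat g(y)-\hat h(x)(y-x)}$ and integrable potentials. But there is no cost function given in advance: $\hat h$ is itself an unknown dual variable, and, as the paper stresses via Example~\ref{ex:strictSeparationHelps}, the resulting ``cost'' $h(x)(y-x)$ may fail to be $\mu\otimes\nu$-integrable, so EOT theory cannot simply be invoked off the shelf. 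The actual construction is precisely the missing content: a minimax duality for $\widehat{\mathcal{M}}(\mu,\nu)$ (Lemma~\ref{lem:duality}), saddle points $(h_m,\pi_m)$ over the truncated classes $L_m(\R)$ (Lemma~\ref{lem:saddle}), an $L^1(\tilde\mu)$ bound on $(h_m)$ obtained from a specially constructed coupling $P$ with strictly signed conditional barycenters (Lemmas~\ref{lem:P} and~\ref{lem:bounded}), equicontinuity-type bounds for $(f_m,g_m)$ (Lemma~\ref{lem:equi}), a passage to weak limits (Lemmas~\ref{lem:weak} and~\ref{lem:form}), and the uniform integrability of $(f_m^+,g_m^+)$ in Lemma~\ref{lem:hard}. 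That last step is where Assumption~\ref{ass:1} is used, to obtain $\hat f\in L^1(\mu)$ and $\hat g\in L^1(\nu)$ --- not, as you suggest, in the martingale-upgrade step.

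Second, your plan to show $\hat\pi\in\mathcal{M}(\mu,\nu)$ by ``perturbing $\hat\pi$ toward any $\pi\in\mathcal{M}_{\text{fin}}(\mu,\nu)$'' is not an argument: first-order optimality of $\hat\pi$ within $\widehat{\mathcal{M}}(\mu,\nu)$ only yields variational inequalities relative to the relaxed constraint set and does not by itself force the conditional barycenter to vanish. The paper's proof is different and uses the specific outputs of Lemma~\ref{lem:hard}: the complementary-slackness identity $\int \hat h(x)(y-x)\,\hat\pi(dx,dy)=0$, the sign constraint $x\hat h(x)\le 0$, and the centering of $\mu,\nu$ force $\hat h=0$ on the set where the barycenter deviates, whereupon the everywhere-valid Schr\"odinger equations $\int e^{\hat f(x)+\hat g(y)}\,\nu(dy)=1$ contradict the existence of points $x_2\le 0\le x_1$ with barycenters on opposite sides (Lemma~\ref{lem:martingale}). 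Likewise, the asserted integrability $h(x)(y-x)\in L^1(\pi)$ for \emph{every} $\pi\in\mathcal{M}_{\text{fin}}(\mu,\nu)$ requires a separate argument (\cite[Lemma~1.4]{Nutz.20} together with the martingale property and Fubini--Tonelli), which your sketch omits. Your uniqueness paragraph is essentially right in spirit, but ``evaluating at two such $y$'s'' must be made rigorous since the identity only holds $\mu\otimes\nu$-a.e.; the paper does this with a cross-section lemma (a Fubini argument over pairs of $x$'s would also work).
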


Theorem~\ref{thm:main} implies a strong duality result.

\begin{corollary}[Strong duality]\label{co:duality}
Define the dual domain
\begin{align*}%
\cD = \Big\{(\tilde f,\tilde g,\tilde h)\in &L^{1}(\mu)\times L^{1}(\nu)\times L^{0}(\mu):\,\tilde{h}(x)(y-x)\in L^1(\pi) \quad\mbox{for all}\quad \pi\in \mathcal{M}_{\text{fin}}(\mu,\nu)\Big \}.
\end{align*}
We have 
\begin{align*}%
&\inf_{\pi\in \mathcal{M}(\mu,\nu)} H(\pi | \mu\otimes\nu)\\
&= \sup_{(\tilde f,\tilde g,\tilde h)\in \cD} \int \tilde f(x)\, \mu(dx) + \int \tilde g(y) \, \nu(dy) - \log \int e^{\tilde f(x)+\tilde g(y)-\tilde h(x)(y-x)}\, \mu(dx)\nu(dy)
\end{align*}
and the argmax of the right-hand side is precisely the set of potentials $(f,g,h)$. 
In particular,
\begin{align}\label{eq:rev1}
\inf_{\pi\in \mathcal{M}(\mu,\nu)} H(\pi | \mu\otimes\nu) 
 = \int f(x)\, \mu(dx) + \int g(y) \, \nu(dy).
\end{align}
\end{corollary}

\begin{proof}
  If $\pi=\pi^{*}$ and $( f, g, h)$ are potentials, the two sides of the first display are equal because the last integral has value $\pi^{*}(\R\times\R)=1$. By the weak duality inequality, shown in Lemma~\ref{le:weakDuality} below, it follows that $(f,g,h)$ are dual maximizers and \eqref{eq:rev1} holds.
Conversely, let $(\tilde f,\tilde g,\tilde h)\in \mathcal{D}$ be dual maximizers. Then strict concavity of the dual problem implies that $(\tilde f,\tilde g,\tilde h)$ are potentials. More precisely, \eqref{eq:jensen_rev} with $\pi=\pi^{*}$ and strict concavity of $x\mapsto \log(x)$ show that $\tilde{f}(x)+\tilde{g}(y)-\tilde{h}(x)(y-x)=f(x)+g(y)-h(x)(y-x)$ $(\mu\otimes\nu)$-a.s. 
\end{proof} 

\begin{remark}
  Theorem~\ref{thm:main} is slightly weaker than its analogue in EOT, where it is known that a coupling with density of the form $\exp(f(x) + g(y))$ is necessarily optimal, even without any integrability conditions on $f$ and $g$  (see~\cite{Nutz.20}). The proof of the latter result exploits a particularity of the form $f(x) + g(y)$ and does not apply here. Using the techniques in~\cite{NutzWieselZhao.22a}, one could show, even without Assumption~\ref{ass:1}, that the optimal coupling $\pi^{*}$ has a density of the form $e^{f(x)+g(y)-h(x)(y-x)}$ for some functions $f,g,h$. Moreover, those functions are unique up to an affine shift, by Lemma~\ref{le:uniqueness}. However, the  integrability of the functions is unclear in general. Assumption~\ref{ass:1} allows us to establish integrability from first principles and formulate a straightforward relationship between the primal and dual problems as stated in Theorem~\ref{thm:main}.
\end{remark}

\section{Proofs}\label{se:proofs}  %

The primal existence and uniqueness are straightforward. Indeed, by the lower semicontinuity of $\pi\mapsto H(\pi| \mu\otimes \nu )$ and weak compactness of $\mathcal{M}(\mu,\nu)$ there exists a minimizer $\pi^\ast\in \mathcal{M}(\mu,\nu)$ for the martingale Schr\"odinger bridge problem~\eqref{eq:opt},
\begin{align*}
\inf_{\pi\in \mathcal{M}(\mu,\nu)} H(\pi | \mu\otimes\nu)=H(\pi^\ast| \mu\otimes\nu),
\end{align*}
and the minimizer is unique by the strict convexity of $\pi\mapsto H(\pi| \mu\otimes\nu)$. Alternately, existence and uniqueness also follow from the general result of \cite{Csiszar.75} on entropy minimization.

The essential part of Theorem \ref{thm:main} is to construct the functions $f,g,h$ together with their integrability properties. As the proof is quite long, the following remark sketches its broad idea.

\begin{remark}[Proof idea]\label{rk:proofIdea}
We try to relax the delicate martingale constraint into a weaker constraint that is easier to handle. Specifically, we consider 
\begin{align}\label{eq:opt3}
\inf_{\pi \in \widehat{\mathcal{M}}(\mu,\nu)} H(\pi |\mu\otimes \nu),
\end{align}
where 
\begin{align}\label{eq:hatM}
\widehat{\mathcal{M}}(\mu,\nu) = \left\{\pi\in \Pi(\mu,\nu):\ x \int (y-x)\,\pi_x(dy) \ge 0\quad \mu\text{-a.s.}\right\}.
\end{align}
Clearly $\widehat{\mathcal{M}}(\mu,\nu)\supseteq \mathcal{M}(\mu,\nu)$. We will establish the desired properties for the optimizer~$\hat\pi$ of~\eqref{eq:opt3}, and then show that it is in fact also the optimizer for~\eqref{eq:opt}. The intuition for this is as follows.  
The measure $\pi=\mu\otimes\nu$ satisfies $\pi_x=\nu$ and hence
\begin{align*}%
\int (y-x)\,\pi_x(dy)=\int y\,\nu(dy) -x=-x, \qquad \pi:=\mu\otimes\nu.
\end{align*}
That is, the sign of the (shifted) barycenter of $\pi_x$ is opposite to the sign of~$x$, which is the exact opposite of the condition in~\eqref{eq:hatM}. As the optimizer $\hat\pi$ of \eqref{eq:opt3} strives to be ``close'' to $\mu\otimes \nu$ by its definition, we may speculate that it saturates the constraint in~\eqref{eq:hatM}, which is to say that $\hat\pi\in \cM(\mu,\nu)$.

Our proof below starts from  the  dual of the convex problem~\eqref{eq:opt3}. We  will show in Lemma \ref{lem:duality} that
\begin{align*}%
\inf_{\pi\in \widehat{\mathcal{M}}(\mu,\nu)} H(\pi| \mu\otimes\nu)=\sup_{h\in C_b(\R):\  x h(x)\le 0} \inf_{\pi\in \Pi(\mu,\nu)} \int h(x)(y-x)\,\pi(dx,dy)+H(\pi| \mu\otimes\nu),
\end{align*}
where $C_b(\R)$ denotes the bounded continuous functions on $\R$. 
Taking a maximizing sequence $(h_m)$ for the supremum, we define $(f_m, g_m)$ as the corresponding Schr\"odinger potentials from EOT theory and argue that the sequence of measures $(f_m,g_m,h_m)_*(\mu\otimes\nu)$ is tight (where $\ast$ denotes pushforward). Denoting the weak limit by $(f,g,h)_*(\mu\otimes\nu)$, we prove that $(f,g,h)$ give rise to the form \eqref{eq:opt1} for $\hat{\pi}$, and that the optimizers $\hat{\pi}$ and $\pi^*$ coincide. 
\end{remark} 

Moving on to the actual proof, we recall the centering~\eqref{eq:centered} as well as $\mu\neq\delta_0$ and $\nu\neq\delta_0$. We also define $\sgn(x):=\1_{\R^+}(x) -\1_{\R^-}(x)$, where 
$$\R^+:=[0,\infty), \qquad \R^-:=(-\infty,0).$$ 
We first introduce a probability measure $P\in \Pi(\mu,\nu)$ which will act as a counterpart of $\mu\otimes\nu$ in the sense that its kernel satisfies inequalities opposite to the ones that $\nu$ (the kernel of $\mu\otimes\nu$) satisfies. 

\begin{lemma}\label{lem:P}
There exists $P\in \Pi(\mu,\nu)$ such that $H(P|\mu\otimes\nu)<\infty$ and $\mu$-a.s.,
$$ \int (y-x)\,P_x(dy)
\begin{cases} 
>0 \quad \text{for }x\ge 0,\\
<0 \quad \text{for }x< 0.
\end{cases}$$
\end{lemma}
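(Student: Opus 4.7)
My plan is to construct $P$ as a bounded signed perturbation of the product coupling $\mu \otimes \nu$, exploiting Assumption~\ref{ass:1} to create a bias coupling positive values of $x$ to the upper tail of $\nu$ and, symmetrically, negative values of $x$ to the lower tail. Let $m_\pm := \mu(\R^\pm)$, which are strictly positive by the centering~\eqref{eq:centered} and $\mu \neq \delta_0$, and let $\alpha := \nu([s_+, \infty))$, $\beta := \nu((-\infty, s_-])$, which are strictly positive by Assumption~\ref{ass:1}. I would then define the signed measure
\[
 \eta := \left(\tfrac{\mu|_{\R^+}}{m_+} - \tfrac{\mu|_{\R^-}}{m_-}\right) \otimes \left(\tfrac{\nu|_{[s_+,\infty)}}{\alpha} - \tfrac{\nu|_{(-\infty,s_-]}}{\beta}\right),
\]
which factors as a product of two signed measures, each of total mass zero; hence both marginals of $\eta$ vanish, and its Radon--Nikodym density with respect to $\mu\otimes\nu$ is uniformly bounded.

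For $\epsilon > 0$ small enough that $1 + \epsilon\,d\eta/d(\mu\otimes\nu) \geq 0$ pointwise, the measure $P_\epsilon := \mu\otimes\nu + \epsilon\eta$ lies in $\Pi(\mu,\nu)$ with density bounded above and away from zero with respect to $\mu\otimes\nu$, hence $H(P_\epsilon | \mu\otimes\nu) < \infty$. A direct disintegration yields, for $x \in \R^+ \cap \supp(\mu)$,
\[
 \int (y-x)\,(P_\epsilon)_x(dy) = \tfrac{\epsilon}{m_+}\!\left(\tfrac{M^R}{\alpha} - \tfrac{M^L}{\beta}\right) - x,
\]
where $M^R := \int y\,\nu|_{[s_+,\infty)}(dy) \geq s_+\alpha$ and $M^L := \int y\,\nu|_{(-\infty,s_-]}(dy) \leq s_-\beta$; the analogous identity with opposite sign holds on $\R^-$. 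The leading constant is strictly positive and bounded below by $\epsilon(s_+ + |s_-|)/m_+$.

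It remains to choose $\epsilon$ within the admissible range so that the leading constant strictly dominates $x$ for $\mu$-a.e. $x \geq 0$, and this is the technical heart of the proof. Assumption~\ref{ass:0} is essential to rule out the degenerate boundary configurations that would otherwise preclude the strict inequality: if $\mu(\{s_+\}) > 0$ and $t_+ := \sup\supp(\nu) = s_+$, then the martingale kernel $\bar{\pi}_{s_+}$ from Assumption~\ref{ass:0}, being equivalent to $\nu$ with mean $s_+$, would be forced to equal $\delta_{s_+}$, contradicting equivalence with $\nu \neq \delta_{s_+}$; an analogous statement holds at $s_-$. The main obstacle is combining these boundary observations with the explicit perturbation to guarantee that a suitable $\epsilon$ exists for every $(\mu,\nu)$ satisfying the assumptions; in corner cases where the simple linear perturbation misses the threshold, a supplementary convex combination with $\bar{\pi}$, or a replacement of $P_\epsilon$ by the entropic optimal transport coupling with biased cost $c(x,y) = -\lambda\sgn(x)\,y$ for sufficiently large $\lambda$, provides the needed extra flexibility while retaining finite entropy.
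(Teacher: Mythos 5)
There is a genuine gap, and it sits exactly where you flag it (``the technical heart''): the admissible range of $\epsilon$ is too small to win against $x$ near $s_+$, and neither of your proposed rescues works. Quantitatively, nonnegativity of $1+\epsilon\,d\eta/d(\mu\otimes\nu)$ forces $\epsilon\le \min\{m_+\beta,\,m_-\alpha\}$, so the barycenter shift $\tfrac{\epsilon}{m_+}(M^R/\alpha-M^L/\beta)$ need not exceed $s_+$. More structurally, your $P_\epsilon$ (and also the EOT coupling for the cost $-\lambda\sgn(x)y$) has a kernel depending on $x$ only through $\sgn(x)$; for any such coupling, $m_+\rho_++m_-\rho_-=\nu$ forces the barycenter of the kernel $\rho_+$ on $\{x\ge 0\}$ to be at most the mean of the upper $m_+$-quantile part of $\nu$, which can be strictly smaller than $s_+$ while Assumptions~\ref{ass:0} and~\ref{ass:1} hold. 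For instance, take $\mu$ uniform on $[-1,1]$ and $\nu=(1-2\beta)\mu+\beta\delta_{-c}+\beta\delta_{c}$ with $\beta=0.01$, $c=2.5$: then $(\mu,\nu)$ is irreducible with an equivalent finite-entropy martingale coupling, $\nu([s_+,\infty))=\nu((-\infty,s_-])=\beta>0$, yet the mean of the top half of $\nu$ is $0.54<1=s_+$, so $\int(y-x)P_x(dy)<0$ on a $\mu$-non-null set of $x\in(0.54,1)$ for \emph{every} sign-based kernel. The suggested convex combination with $\bar\pi$ cannot repair this either: since $\bar\pi_x$ has barycenter exactly $x$, mixing merely multiplies the defect $\int(y-x)(P_\epsilon)_x(dy)$ by the mixing weight and cannot change its sign. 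The aside about $\mu(\{s_+\})>0$ being incompatible with $t_+=s_+$ under Assumption~\ref{ass:0} is correct but does not bear on this obstruction.

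The missing idea, which is how the paper proceeds, is to perturb the martingale coupling $\bar\pi$ of Assumption~\ref{ass:0} rather than $\mu\otimes\nu$: because $\bar\pi_x$ already has barycenter exactly $x$, an arbitrarily small, $x$-dependent tilt suffices. Concretely, one fixes a coupling $\gamma$ of $\mu|_{\R^+}/\mu(\R^+)$ and $\mu|_{\R^-}/\mu(\R^-)$, and for each pair $(x^+,x^-)$ transfers an amount $\epsilon(x^+,x^-)=\bar\pi_{x^+}((-\infty,x^-))\wedge\bar\pi_{x^-}((x^+,\infty))>0$ (positive because $\bar\pi_x\sim\nu$) of kernel mass from $(-\infty,x^-)$ to $(x^+,\infty)$ in $\bar\pi_{x^+}$ and the reverse in $\bar\pi_{x^-}$. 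This produces strict inequalities at every $x$, preserves both marginals by construction, and finite entropy follows from joint convexity of $H$ and the chain rule, using $H(\bar\pi|\mu\otimes\nu)<\infty$. If you want to keep a perturbative scheme, it must be anchored at $\bar\pi$ in this way; no perturbation of $\mu\otimes\nu$ whose kernel sees only $\sgn(x)$ can work in general.
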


\begin{proof}
By Assumption \ref{ass:0}, there exists $\bar{\pi}\in \mathcal{M}(\mu,\nu)$ satisfying $\bar{\pi}\sim \mu\otimes \nu$ and $H(\bar{\pi}|\mu\otimes\nu)<\infty$. In order to construct $P$, we start from $\bar{\pi}$ and carefully manipulate its conditional distributions $\bar{\pi}_x$: for any $x_-<0\le x_+$, we exchange parts of $\bar{\pi}_{x_+}$ and $\bar{\pi}_{x_-}$ in such a way that the marginal laws do not change.

Fix an arbitrary coupling $$\gamma\in \Pi\Big(\tfrac{1}{\mu(\R^+)} \mu|_{\R^+},\tfrac{1}{\mu(\R^-)} \mu|_{\R^-} \Big);$$ 
it exists due to $\mu\neq \delta_0$. We claim that
\begin{align*}
\epsilon(x^+, x^-):= \bar\pi_{x^+}((-\infty, x^-)) \wedge \bar\pi_{x^-}(x^+,\infty))>0 \quad\mbox{$\gamma(dx^+,dx^-)$-a.s.}
\end{align*}
Indeed, suppose that $\bar\pi_{x^+}((-\infty, x^-)) = 0$ on a set of positive $\gamma$-measure, then as $\bar\pi_x\sim \nu$ $\mu$-a.s.\ due to $\bar\pi \sim \mu\otimes \nu$, we have
\begin{align*}
  \bar\pi_{x^+}((-\infty, x^-)) = 0 = \nu((-\infty, x^-)) = 0 = \bar\pi_{x^-}((-\infty, x^-)).
\end{align*}
As $\bar\pi_{x^-}$ has barycenter $x^-$, this implies that $\bar\pi_{x^-}=\delta_{x^-}$. This contradicts that $\bar\pi_x\sim \nu$ and~$\nu$ is not a Dirac measure.

Next, we define 
\begin{align*}
P^+_{x^+,x^-} &:=  \mu(\R^+) \Big(\bar\pi_{x^+} +\epsilon(x^+, x^-) \mu(\R^-) \Big[\frac{\bar\pi_{x^-}|_{(x^+,\infty)}}{\bar\pi_{x^-}((x^+,\infty))} - \frac{\bar\pi_{x^+}|_{(-\infty, x^-)}}{\bar\pi_{x^+}((-\infty, x^-))} \Big] \Big), \\
P^-_{x^+,x-} &:= \mu(\R^-) \Big( \bar\pi_{x^-} +  \epsilon(x^+, x^-) \mu(\R^+) \Big[ \frac{\bar\pi_{x^+}|_{(-\infty, x^-)}}{\bar\pi_{x^+}((-\infty, x^-))} - \frac{\bar\pi_{x^-}|_{(x^+,\infty)}}{\bar\pi_{x^-}((x^+,\infty))}\Big] \Big)
\end{align*}
and set 
\begin{align}\label{eq:chain1}
P(dx,dy) := \int_{\{x^-\in \R^-\}}  P^+_{x,x^-}(dy) \gamma(dx,dx^-) +  \int_{\{x^+\in \R^+\}} P^-_{x^+,x}(dy) \gamma(dx^+,dx).   
\end{align}
 To see that $P\in \Pi(\mu,\nu)$, we consider an arbitrary Borel set $A\subseteq \R$ and compute
\begin{align*}
&P(A \times \R) \\
&= \int_{\{(x,x^-,y)\in A\times\R^-\times\R\}}  P^+_{x,x^-}(dy) \gamma(dx,dx^-) +  \int_{\{(x,x^+,y)\in A\times\R^+\times \R \}}  P^-_{x^+,x}(dy) \gamma(dx^+,dx)\\
&=  \mu(\R^+) \int_{\{(x,x^-)\in A\cap\R^+\times \R^-\}} \gamma(dx,dx^-) + \mu(\R^-)  \int_{\{(x,x^+)\in A\cap \R^- \times \R^+\}} \gamma(dx^+,dx) \\
&= \mu(\R^+) \frac{1}{\mu(\R^+)} \mu(A\cap \R^+) +  \mu(\R^-)\frac{1}{\mu(\R^-)} \mu(A\cap \R^-) = \mu(A)
\end{align*}
as well as
\begin{align*}
&P(\R \times A) \\
&= \int_{\{(x,x^-, y)\in \R^+\times\R^-\times A\}}  P^+_{x,x^-}(dy) \gamma(dx,dx^-) +  \int_{\{ (x,x^+, y)\in \R^-\times\R^+\times A\}} P^-_{x^+,x}(dy) \gamma(dx^+,dx)\\
&= \int_{\R^+} \bar\pi_x(A)\,\mu(dx) + \int_{\R^-} \bar\pi_x(A)\,\mu(dx)\\
&\quad +  \int_{\{(x^+,x^-)\in \R^+\times\R^-\}}  \epsilon(x^+, x^-)  \mu(\R^-)  \mu(\R^+)  \Big( \Big[\frac{\bar\pi_{x^-}(A\cap (x^+,\infty))}{\bar\pi_{x^-}((x^+,\infty))} - \frac{\bar\pi_{x^+}(A\cap (-\infty, x^-))}{\bar\pi_{x^+}((-\infty, x^-))} \Big]   \\
&\qquad\qquad\qquad + \Big[ \frac{\bar\pi_{x^+}(A\cap (-\infty, x^-))}{\bar\pi_{x^+}((-\infty, x^-))} - \frac{\bar\pi_{x^-}(A\cap (x^+,\infty))}{\bar\pi_{x^-}((x^+,\infty))}\Big]\Big)\, \gamma(dx^+,dx^-)=\nu(A).
\end{align*}
Furthermore,
$$ \int (y-x)\,P_x(dy)
\begin{cases} 
>0 \quad \text{for }x\ge 0,\\
<0 \quad \text{for }x< 0
\end{cases}$$
follows directly from the  construction of~$P$.

It remains to show $H(P|\mu\otimes\nu)<\infty$. To that end, write 
\begin{align}\label{eq:chain2}
(\mu\otimes \nu) (dx,dy) =  \mu(\R^+) \int_{\{x^-\in \R^-\}}   \nu(dy) \gamma(dx,dx^-) +  \mu(\R^-) \int_{\{x^+\in  \R^+\}}  \nu(dy) \gamma(dx^+,dx)
\end{align}
and recall that for any $R,Q\in\cP(\R^2)$ with first marginals $R^1,Q^1$ we have the ``chain rule''
\begin{align}\label{eq:chain3}
H(R|Q) = H(R^1|Q^1)+ \int H(R_x|Q_x)\,R^1(dx).
\end{align}
Comparing \eqref{eq:chain1} and \eqref{eq:chain2} and using joint convexity of $H(\cdot|\cdot)$ twice, we have
\begin{align*}
H(P | \mu\otimes\nu) &\le \mu(\R^+) \int_{\{x^-\in \R^-\}} H\Big(  \int \frac{P^+_{\cdot ,x^-}}{\mu(\R^+)}  \gamma_{x^-}(\cdot)  \Big| \nu \otimes \gamma_{x^-}\Big) \frac{1}{\mu(\R^-)} \mu(dx^-)\\
&+ 
\mu(\R^-) \int_{\{x^+\in  \R^+\}} H\Big( \int \frac{P^-_{x^+,\cdot }}{\mu(\R^-)}  \gamma_{x^+} (\cdot ) \Big| \nu \otimes \gamma_{x^+}\Big) \frac{1}{\mu(\R^+)} \mu(dx^+).
\end{align*}
Using \eqref{eq:chain3}, we obtain by convexity of relative entropy that
\begin{align*}
H(P | \mu\otimes\nu) &\le  \mu(\R^+) \int_{\{(x,x^-)\in \R^+\times\R^- \}} H( P^+_{x,x^-}/\mu(\R^+) | \nu)\, \gamma(dx,dx^-)\\
 &\qquad+ \mu(\R^-) \int_{\{(x^+,x)\in \R^+\times\R^- \}}    H( P^-_{x^+,x}/\mu(\R^-) |  \nu)\, \gamma(dx^+,dx)\\
& \le \mu(\R^+) \int_{\{(x,x^-)\in \R^+\times\R^- \}} H(   \bar\pi_x | \nu) + H(  \mu(\R^-) \bar\pi_{x^-} |  \nu) \, \gamma(dx,dx^-) \\
&\qquad +  \mu(\R^-) \int_{\{(x^+,x)\in \R^+\times\R^- \}}  H(  \bar\pi_{x} | \nu) + H(   \mu(\R^+) \bar\pi_{x^+} |  \nu)  \, \gamma(dx^+,dx)+C\\
&\le 2 \int H(\bar\pi_x|\nu)\,\mu(dx)+C<\infty,
\end{align*}
where $C>0$ is chosen such that $x\log(x)+C\ge 0$.
This concludes the proof.
\end{proof}

Next, we consider the convex set $\widehat{\mathcal{M}}(\mu,\nu)\supseteq \mathcal{M}(\mu,\nu)$ announced in~\eqref{eq:hatM},
\begin{align*}
\widehat{\mathcal{M}}(\mu,\nu) := \left\{\pi\in \Pi(\mu,\nu):\ x \int (y-x)\,\pi_x(dy) \ge 0\quad \mu\text{-a.s.}\right\}.
\end{align*}
Using the same arguments as for $\mathcal{M}(\mu,\nu)$, the set $\widehat{\mathcal{M}}(\mu,\nu)$ is weakly compact (see e.g. \cite[Proof of Prop.~2.4.]{BeiglbockHenryLaborderePenkner.11}), so there is a unique $\hat{\pi}\in \widehat{\mathcal{M}}(\mu,\nu)$ such that
\begin{align}\label{eq:inclusion}
H(\hat{\pi}| \mu\otimes\nu)=\inf_{\pi\in \widehat{\mathcal{M}}(\mu,\nu)} H(\pi | \mu\otimes\nu)\le \inf_{\pi\in \mathcal{M}(\mu,\nu)} H(\pi | \mu\otimes\nu)<\infty.
\end{align}

We start our analysis with a duality result.

\begin{lemma}\label{lem:duality}
We have
\begin{align*}
&\inf_{\pi\in \widehat{\mathcal{M}}(\mu,\nu)} H(\pi| \mu\otimes\nu)\\
&=\sup_{h\in C_b(\R):\  x h(x)\le 0} \inf_{\pi\in \Pi(\mu,\nu)} \int h(x)(y-x)\,\pi(dx,dy)+H(\pi| \mu\otimes\nu)\\
&=\sup_{h\in C_b(\R):\  x h(x)\le 0}  \sup_{f,g\in C_b(\R)} \int f(x)\,\mu(dx)+\int g(y)\,\nu(dy)\\
&\quad -\int e^{f(x)+g(y)-h(x)(y-x)}\,\mu(dx)\nu(dy)+1.
\end{align*}
In the above, $C_b(\R)$ can be replaced by $L^{1}(\mu)$ and $L^1(\nu)$ respectively.
\end{lemma}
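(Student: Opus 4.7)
The statement contains two equalities: the first is a minimax duality for the implicit inequality constraint defining $\widehat{\mathcal{M}}(\mu,\nu)$, and the second is a pointwise application of entropic optimal transport duality. I would handle them in this order.

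To set up the minimax, let $a_\pi(x):=\int(y-x)\,\pi_x(dy)$, which lies in $L^1(\mu)$ for any $\pi\in\Pi(\mu,\nu)$ because both marginals have finite first moment. The crucial representation is
\[
\sup_{h\in C_b(\R):\,xh(x)\le 0}\int h(x)(y-x)\,\pi(dx,dy)=\begin{cases} 0, & \pi\in\widehat{\mathcal{M}}(\mu,\nu),\\ +\infty, & \text{otherwise.}\end{cases}
\]
The first line follows from the sign of $h\cdot a_\pi$ combined with the value at $h\equiv 0$; the second line is obtained by building continuous bump functions of the correct sign supported near the set $\{xa_\pi<0\}$ and letting their amplitude tend to infinity. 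Writing $F(\pi,h):=H(\pi|\mu\otimes\nu)+\int h(x)(y-x)\,\pi(dx,dy)$, we therefore have $\inf_{\widehat{\mathcal{M}}(\mu,\nu)}H(\cdot|\mu\otimes\nu)=\inf_{\pi\in\Pi(\mu,\nu)}\sup_{h}F(\pi,h)$, and Sion's minimax theorem exchanges $\inf$ and $\sup$: the set $\Pi(\mu,\nu)$ is weakly compact and convex; the admissible $h$'s form a convex set; $F(\cdot,h)$ is convex and weakly lsc (combining lower semicontinuity of the relative entropy with weak continuity of $\pi\mapsto\int h(x)(y-x)\,d\pi$ on $\Pi(\mu,\nu)$, the latter resting on uniform integrability of $|x|+|y|$ across couplings with fixed marginals of finite first moment); and $F(\pi,\cdot)$ is affine and continuous in the uniform topology on $C_b(\R)$. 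This produces the first equality.

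For the second equality, fix an admissible $h$ and set $c(x,y):=h(x)(y-x)$, so that $|c|\le\|h\|_\infty(|x|+|y|)\in L^1(\mu\otimes\nu)$. Standard entropic optimal transport duality (see e.g.\ \cite{Nutz.20}) then gives
\[
\inf_{\pi\in\Pi(\mu,\nu)}\Big[\int c\,d\pi+H(\pi|\mu\otimes\nu)\Big]=\sup_{f,g}\Big[\int f\,d\mu+\int g\,d\nu-\int e^{f+g-c}\,d(\mu\otimes\nu)+1\Big],
\]
a priori with $f\in L^1(\mu)$ and $g\in L^1(\nu)$; a density argument (truncation of the $L^1$-optimizers followed by regularization) reduces the sup to $C_b(\R)$ potentials without changing its value. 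Taking $\sup_h$ on both sides yields the second identity. The main obstacle throughout is the unboundedness of $h(x)(y-x)$ in the $y$-variable, which would block a naive appeal to standard compactness/continuity statements; restricting to $\pi\in\Pi(\mu,\nu)$ instead of to arbitrary probability measures on $\R^2$ is what restores enough uniform integrability of $|x|+|y|$ to rescue both Sion's theorem and the EOT duality.
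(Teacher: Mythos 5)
Your proposal is correct and follows essentially the same route as the paper: the indicator-type representation of the constraint $\widehat{\mathcal{M}}(\mu,\nu)$ combined with Sion's minimax theorem for the first equality, and EOT duality for the fixed cost $c(x,y)=h(x)(y-x)\in L^1(\mu\otimes\nu)$ followed by an approximation reducing $L^1$ potentials to $C_b(\R)$ for the second. The only cosmetic difference is in producing a violating $h$ when $\pi\notin\widehat{\mathcal{M}}(\mu,\nu)$: you sketch bump functions near the Borel set $\{x\,a_\pi(x)<0\}$ (which requires an inner/outer regularity argument to ensure the positive contribution dominates), whereas the paper argues via the signed measure $a_\pi\,d\mu$ and the fact that nonnegative $C_b$ functions separate its positive and negative parts; both are routine implementations of the same step.
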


\begin{proof}
We first claim that 
\begin{align}\label{eq:dualityClaim1}
\inf_{\pi\in \widehat{\mathcal{M}}(\mu,\nu)} H(\pi| \mu\otimes\nu)
&=\inf_{\pi\in \Pi(\mu,\nu)}\sup_{h\in C_b(\R):\  x h(x)\le 0} \int h(x)(y-x)\,\pi(dx,dy)+H(\pi| \mu\otimes\nu).
\end{align}
Here ``$\geq$'' holds because $\int h(x)(y-x)\,\pi(dx,dy)\leq0$ for all $\pi\in \widehat{\mathcal{M}}(\mu,\nu)$ 
and $h\in C_b(\R)$ with $x h(x)\le 0$. To see the inequality ``$\leq$'' we fix 
$\pi\in \Pi(\mu,\nu)\setminus\widehat{\mathcal{M}}(\mu,\nu)$ and show that there exists $h\in C_b(\R)$ with $x h(x)\le 0$ such that $\int h(x)(y-x)\,\pi(dx,dy)>0$. By scaling, this will show that the supremum has value $+\infty$. 

Indeed, let $\varphi(x):=\int (y-x)\,\pi_x(dy)$ and $A:=\{x:\, x \varphi(x) < 0\}$. Then  $\varphi\in L^{1}(\mu)$ and $\mu(A)>0$. As $0\notin A$, either $\mu(A\cap(0,\infty))>0$ or $\mu(A\cap(-\infty,0))>0$. We treat the first case; the second is analogous. Consider the finite signed measure $d\rho=\varphi\,d\mu$ on $(0,\infty)$. By our assumption, its negative part $\rho_{-}\neq0$. As $C_b(0,\infty)$ is distribution-determining and $0\neq\rho_{-}\neq\rho_{+}$, there exists a nonnegative $s\in C_b(0,\infty)$ with $\int s \,d\rho_{-} > \int s \,d\rho_{+}$. Moreover, we can choose $s$ such that $s(0+)=0$ (as $\rho$ has no atom at zero), and then~$s$ can be extended to a bounded continuous function~on $\R$ that vanishes on $(-\infty,0]$. Taking $h(x):=-s(x)$, we have $xh(x)\leq0$ and
\begin{align*}
  \int h(x)(y-x)\,\pi(dx,dy) =  \int h(x)\varphi(x)\,\mu(dx)= \int h(x)\, (\rho_{+}-\rho_{-})(dx) >0,
\end{align*} 
completing the proof of~\eqref{eq:dualityClaim1}.

Note that the sets $\{h\in C_b(\R):\  x h(x)\le 0\}$ and $\Pi(\mu,\nu)$ are convex, and $\Pi(\mu,\nu)$ is compact for the weak topology. Furthermore $h \mapsto  \int h(x)(y-x)\,\pi(dx,dy)$ is concave and $\pi\mapsto  \int h(x)(y-x)\,\pi(dx,dy)$ is convex and continuous on $\Pi(\mu,\nu)$; the continuity follows from the fact that $(x,y)\mapsto h(x)(y-x)$ is a continuous function with $|h(x)(y-x)|\le C(1+|x|+|y|)$ and the assumption that the marginals~$\mu$,$\nu$ have finite first moments.  Using Sion's minimax theorem \cite[Cor. 3.3]{Sion.58} and~\eqref{eq:dualityClaim1},
\begin{align*}
\inf_{\pi\in \widehat{\mathcal{M}}(\mu,\nu)} H(\pi| \mu\otimes\nu)
& =\inf_{\pi\in \Pi(\mu,\nu)}\sup_{h\in C_b(\R):\  x h(x)\le 0} \int h(x)(y-x)\,\pi(dx,dy)+H(\pi| \mu\otimes\nu)\\
&=\sup_{h\in C_b(\R):\  x h(x)\le 0} \; \inf_{\pi\in \Pi(\mu,\nu)} \int h(x)(y-x)\,\pi(dx,dy)+H(\pi| \mu\otimes\nu).
\end{align*}
We focus on the inner minimization for fixed $h$, which is an EOT problem with cost function $c(x,y):=h(x)(y-x)$. As $|h(x)(y-x)|\le C(1+|x|+|y|)$, we have $c\in L^{1}(\mu\otimes\nu)$ and~$c$ is equivalent to a nonnegative cost after adding $C(1+|x|+|y|)$. EOT duality (specifically \cite[Theorem~4.7 and Remark~4.4]{Nutz.20}) allows us to conclude that the above infimum equals
\begin{align*}
\sup_{f\in L^{1}(\mu),\, g\in L^{1}(\nu)} \int f(x)\,\mu(dx)+\int g(y)\,\nu(dy) 
 -\int e^{f(x)+g(y)-h(x)(y-x)}\,\mu(dx)\nu(dy)+1.
\end{align*}
Here we can replace $L^{1}$ with $L^{\infty}$ by monotone approximation, and then further replace $L^{\infty}$ with $C_b(\R)$ by a standard density argument, completing the proof.
\end{proof}

\begin{corollary}
Define
\begin{align*}
L_m(\R):= \{h:\R\to [-m,m]: h\mathrm{\ is\ }m\text{-}\mathrm{Lipschitz}, \,  xh(x)\le 0\}.
\end{align*}
Then
\begin{align}\label{eq:minimax}
\begin{split}
&\inf_{\pi\in \widehat{\mathcal{M}}(\mu,\nu)} H(\pi| \mu\otimes\nu)\\
&=\sup_{m\in \N} \; \sup_{h\in L_m(\R)} \; \inf_{\pi\in \Pi(\mu,\nu)}  \int h(x)(y-x)\,\pi(dx,dy)+H(\pi| \mu\otimes\nu)\\
&=\sup_{m\in \N} \;  \inf_{\pi\in \Pi(\mu,\nu)} \; \sup_{h\in L_m(\R)}  \int h(x)(y-x)\,\pi(dx,dy)+H(\pi| \mu\otimes\nu). 
\end{split}
\end{align}
\end{corollary}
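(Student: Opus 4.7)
The plan is to chain three equalities. First, Lemma~\ref{lem:duality} already gives the identity
$\inf_{\pi\in\widehat{\mathcal{M}}(\mu,\nu)} H(\pi|\mu\otimes\nu)=\sup_{h\in C_b(\R),\, xh\le 0}\ \inf_{\pi\in\Pi(\mu,\nu)} J(h,\pi)$,
where $J(h,\pi):=\int h(x)(y-x)\,\pi(dx,dy)+H(\pi|\mu\otimes\nu)$. The corollary therefore reduces to showing (i) that replacing the outer supremum by $\sup_m\sup_{h\in L_m(\R)}$ does not change the value, and (ii) that Sion's minimax theorem applies on each $L_m(\R)\times\Pi(\mu,\nu)$ to swap the order of $\sup_{L_m}$ and $\inf_\Pi$.

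For (i), the inequality $\sup_m\sup_{L_m}\inf_\Pi J \le \sup_{C_b,\,xh\le 0}\inf_\Pi J$ is immediate because $L_m(\R)\subset\{h\in C_b(\R):\, xh\le 0\}$ for every $m$. For the reverse, I would fix $h\in C_b(\R)$ with $xh\le 0$ and construct $h_k\in L_{m_k}(\R)$ that converge to $h$ uniformly on $\supp(\mu)$. Assumption~\ref{ass:1} ensures $\supp(\mu)$ is compact, where $h$ is uniformly continuous; moreover $h(0)=0$ by continuity plus the sign constraint. A Lipschitz approximation carried out separately on $[0,\infty)$ and $(-\infty,0]$ (for instance by inf-convolution, then truncating by $\pm\|h\|_\infty$) produces $h_k$ that preserve both $xh_k(x)\le 0$ and $\|h_k\|_\infty\le\|h\|_\infty$. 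Because $\int(|x|+|y|)\,d\pi = \int|x|\,d\mu+\int|y|\,d\nu$ is a constant independent of $\pi\in\Pi(\mu,\nu)$,
\begin{align*}
  \sup_{\pi\in\Pi(\mu,\nu)} |J(h_k,\pi)-J(h,\pi)| \le \|h_k-h\|_{L^\infty(\supp(\mu))}\Bigl(\int|x|\,d\mu+\int|y|\,d\nu\Bigr) \to 0,
\end{align*}
so $\inf_\pi J(h_k,\pi)\to\inf_\pi J(h,\pi)$, completing the density step.

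For (ii), fix $m\in\N$. The set $L_m(\R)$ is convex and compact in the topology of uniform convergence on compacts by Arzel\`a--Ascoli (the family is equicontinuous and uniformly bounded), and $\Pi(\mu,\nu)$ is convex and weakly compact. For fixed $\pi$, $h\mapsto J(h,\pi)$ is affine and, by dominated convergence with dominating function $m(|x|+|y|)\in L^1(\pi)$, continuous on $L_m(\R)$. For fixed $h\in L_m(\R)$, $\pi\mapsto J(h,\pi)$ is convex; its lower semicontinuity on $\Pi(\mu,\nu)$ combines the standard weak lower semicontinuity of relative entropy with weak continuity of $\pi\mapsto\int h(x)(y-x)\,d\pi$, where the latter uses that $|x|+|y|$ is uniformly integrable under couplings with fixed marginals. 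Sion's theorem then yields $\sup_{L_m}\inf_\Pi J=\inf_\Pi\sup_{L_m} J$ for every $m$, and taking $\sup_m$ on both sides delivers the second equality of the corollary.

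The main obstacle is the construction in step (i): producing a Lipschitz, sup-norm-bounded approximation of $h$ that still satisfies $xh_k(x)\le 0$. Exploiting $h(0)=0$ and working on the two half-lines separately sidesteps this issue cleanly; the remaining ingredients (Arzel\`a--Ascoli, uniform integrability of first moments, and Sion's theorem) are entirely standard.
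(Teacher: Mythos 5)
Your proposal is correct and follows essentially the same route as the paper's proof: start from Lemma~\ref{lem:duality}, approximate $h\in C_b(\R)$ with $xh(x)\le 0$ by Lipschitz functions obeying the same sign constraint, and then apply Sion's minimax theorem on $L_m(\R)\times\Pi(\mu,\nu)$ (Arzel\`a--Ascoli compactness of $L_m(\R)$, weak compactness of $\Pi(\mu,\nu)$). The only minor difference is that you control the approximation step via compactness of $\supp(\mu)$ (a consequence of Assumption~\ref{ass:1}), whereas the paper instead invokes tightness of $\Pi(\mu,\nu)$ together with uniform Lipschitz approximation on compact intervals; both are fine.
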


\begin{proof}
By Lemma \ref{lem:duality} and an approximation argument,
\begin{align*}
&\inf_{\pi\in \widehat{\mathcal{M}}(\mu,\nu)} H(\pi| \mu\otimes\nu)\\
&= \sup_{h\in C_b(\R):\  x h(x)\le 0} \inf_{\pi\in \Pi(\mu,\nu)} \int h(x)(y-x)\,\pi(dx,dy)+H(\pi| \mu\otimes\nu)\\
&= \sup_{m\in \N}  \sup_{h\in L_m(\R)} \inf_{\pi\in \Pi(\mu,\nu)}  \int h(x)(y-x)\,\pi(dx,dy)+H(\pi| \mu\otimes\nu).
\end{align*}
Specifically, the approximation argument uses tightness of $\Pi(\mu,\nu)$ and the fact that on any given compact interval, we can approximate a given $h\in C_{b}(\R)$ with $x h(x)\le 0 $ uniformly by Lipschitz functions satisfying the same condition. 

The last part of~\eqref{eq:minimax} now follows by another application of Sion's minimax theorem similar to the proof of Lemma~\ref{lem:duality}.
\end{proof}

\begin{lemma}\label{lem:saddle}
Fix $m\in \N$. The problem
\begin{align*}
\sup_{h\in L_m(\R)} \inf_{\pi\in \Pi(\mu,\nu)}   \int h(x)(y-x)\,\pi(dx,dy)+H(\pi| \mu\otimes\nu)
\end{align*}
has a maximizer $h_m\in L_m(\R)$ and the problem 
\begin{align*}
\inf_{\pi\in \Pi(\mu,\nu)}  \sup_{h\in L_m(\R)}  \int h(x)(y-x)\,\pi(dx,dy)+H(\pi| \mu\otimes\nu)
\end{align*}
has a minimizer $\pi_m\in \Pi(\mu,\nu)$. Moreover, $(h_m,\pi_m)$ is a saddle point:
\begin{align}\label{eq:saddle}
\begin{split}
&\sup_{h\in L_m(\R)} \inf_{\pi\in \Pi(\mu,\nu)}   \int h(x)(y-x)\,\pi(dx,dy)+H(\pi| \mu\otimes\nu) \\
&=  \inf_{\pi\in \Pi(\mu,\nu)}   \int h_m(x)(y-x)\,\pi(dx,dy)+H(\pi| \mu\otimes\nu) \\
&= \int h_m(x)(y-x)\,\pi_m(dx,dy)+H(\pi_m| \mu\otimes\nu)\\
&= \sup_{h\in L_m(\R)}  \int h(x)(y-x)\,\pi_m(dx,dy)+H(\pi_m| \mu\otimes\nu)\\
&=  \inf_{\pi\in \Pi(\mu,\nu)}  \sup_{h\in L_m(\R)}  \int h(x)(y-x)\,\pi(dx,dy)+H(\pi| \mu\otimes\nu).
\end{split}
\end{align}
Finally, we have $\pi_m \in \Pi_m(\mu,\nu)$, where 
\begin{align*}
\Pi_m(\mu,\nu):= \Big\{\pi\in \Pi(\mu,\nu): \sup_{h\in L_m(\R)} \int h(x)(y-x)\,\pi(dx,dy)\le H(\hat\pi|\mu\otimes\nu)\Big\}
\end{align*}
and $\hat{\pi}\in \widehat{\mathcal{M}}(\mu,\nu)$ denotes the unique optimizer of \eqref{eq:opt3}. 
\end{lemma}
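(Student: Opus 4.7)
The plan is to prove attainment of both extremum problems via compactness and semicontinuity, then deduce the saddle-point identities by combining attainment with the minimax equality already contained in~\eqref{eq:minimax}. Throughout, write $\Phi(h,\pi):=\int h(x)(y-x)\,\pi(dx,dy)+H(\pi|\mu\otimes\nu)$.

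To produce $h_m$, I would note that $L_m(\R)$ is uniformly bounded by $m$, uniformly $m$-Lipschitz, and closed under locally uniform limits (the constraints $|h|\le m$ and $xh(x)\le 0$ are pointwise and persist); by Arzel\`a--Ascoli it is compact in the topology of locally uniform convergence. For each fixed $\pi\in\Pi(\mu,\nu)$, dominated convergence with the $\pi$-integrable majorant $m(|x|+|y|)$ (finite because the marginals have finite first moment) makes $h\mapsto\Phi(h,\pi)$ continuous, so $h\mapsto\inf_{\pi}\Phi(h,\pi)$ is upper semicontinuous as an infimum of continuous functions and attains its maximum $h_m$ on the compact set $L_m(\R)$. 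To produce $\pi_m$, I would fix $h\in L_m(\R)$: the integrand $(x,y)\mapsto h(x)(y-x)$ is continuous with linear growth, and uniform integrability of $|x|+|y|$ is automatic because the marginals are fixed, so $\pi\mapsto\int h(x)(y-x)\,d\pi$ is weakly continuous on $\Pi(\mu,\nu)$. Hence $\pi\mapsto\sup_{h\in L_m(\R)}\int h(x)(y-x)\,d\pi$ is weakly lsc, and adding the weakly lsc entropy preserves lower semicontinuity, so weak compactness of $\Pi(\mu,\nu)$ yields a minimizer $\pi_m$.

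The saddle-point identities~\eqref{eq:saddle} then follow by a standard sandwich: by Sion's theorem (the same application as in the preceding corollary) $\sup_h\inf_\pi\Phi$ and $\inf_\pi\sup_h\Phi$ share the common value $V_m$, so
\begin{align*}
V_m \;=\; \inf_{\pi}\Phi(h_m,\pi) \;\le\; \Phi(h_m,\pi_m) \;\le\; \sup_{h}\Phi(h,\pi_m) \;=\; V_m,
\end{align*}
forcing every inequality to be an equality and thus yielding all lines of~\eqref{eq:saddle}. For the concluding assertion, \eqref{eq:minimax} gives $V_m\le\sup_{m'}V_{m'}=H(\hat\pi|\mu\otimes\nu)$. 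Using the saddle identity $\sup_{h\in L_m(\R)}\Phi(h,\pi_m)=V_m$ together with $H(\pi_m|\mu\otimes\nu)\ge 0$, every $h\in L_m(\R)$ satisfies $\int h(x)(y-x)\,d\pi_m\le V_m-H(\pi_m|\mu\otimes\nu)\le H(\hat\pi|\mu\otimes\nu)$; taking the supremum over $h$ produces $\pi_m\in\Pi_m(\mu,\nu)$.

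I expect the only mildly subtle point to be the continuity/lsc bookkeeping that allows the linear-growth integrand $h(x)(y-x)$ to pass through locally uniform limits of $h$ and weak limits of $\pi$ (with marginals fixed); no further new ingredient is needed, since once attainment is established the saddle identities and the containment $\pi_m\in\Pi_m(\mu,\nu)$ follow mechanically from the minimax equality and the bound $V_m\le H(\hat\pi|\mu\otimes\nu)$ read off from~\eqref{eq:minimax}.
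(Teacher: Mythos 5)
Your proposal is correct and follows essentially the same route as the paper: Arzel\`a--Ascoli compactness of $L_m(\R)$ and weak compactness of $\Pi(\mu,\nu)$ with the usual semicontinuity arguments for attainment, Sion's theorem plus the sandwich $V_m=\inf_\pi\Phi(h_m,\pi)\le\Phi(h_m,\pi_m)\le\sup_h\Phi(h,\pi_m)=V_m$ for the saddle identities. The only (harmless) divergence is the last containment: you bound the saddle value by $H(\hat\pi|\mu\otimes\nu)$ via \eqref{eq:minimax} and \eqref{eq:inclusion}, whereas the paper gets the same bound directly from $x h_m(x)\le 0$ and $x\int(y-x)\,\hat\pi_x(dy)\ge 0$, which give $\int h_m(x)(y-x)\,d\hat\pi\le 0$; both are valid since \eqref{eq:minimax} precedes the lemma.
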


\begin{proof}
  By the Arzela--Ascoli theorem, $L_m(\R)$ is compact with respect to uniform convergence on compact subsets. The map 
\begin{align*}
L_m(\R) \ni h \mapsto  \int h(x)(y-x)\,\pi(dx,dy)+H(\pi| \mu\otimes\nu)
\end{align*}
is continuous for that convergence, so that the infimum
\begin{align*}
  L_m(\R) \ni h \mapsto  \inf_{\pi\in \Pi(\mu,\nu)}   \int h(x)(y-x)\,\pi(dx,dy)+H(\pi| \mu\otimes\nu)
\end{align*}
is  upper semicontinuous. The existence of a maximizer $h_m\in L_m(\R)$ follows. Similarly, weak compactness of $\Pi(\mu,\nu)$ and integrability of $\mu,\nu$ yield a minimizer $\pi_m\in \Pi(\mu,\nu)$. To verify the saddle point property, note that
\begin{align*}%
\begin{split}
&\sup_{h\in L_m(\R)} \inf_{\pi\in \Pi(\mu,\nu)}   \int h(x)(y-x)\,\pi(dx,dy)+H(\pi| \mu\otimes\nu) \\
&=  \inf_{\pi\in \Pi(\mu,\nu)}   \int h_m(x)(y-x)\,\pi(dx,dy)+H(\pi| \mu\otimes\nu) \\
&\le \int h_m(x)(y-x)\,\pi_m(dx,dy)+H(\pi_m| \mu\otimes\nu)\\
&\le \sup_{h\in L_m(\R)}  \int h(x)(y-x)\,\pi_m(dx,dy)+H(\pi_m| \mu\otimes\nu)\\
&=  \inf_{\pi\in \Pi(\mu,\nu)}  \sup_{h\in L_m(\R)}  \int h(x)(y-x)\,\pi(dx,dy)+H(\pi| \mu\otimes\nu).
\end{split}
\end{align*}
Using once more Sion's minimax theorem, the order of infimum and supremum can be exchanged, showing that equality holds throughout~\eqref{eq:saddle}. %

It remains to show $\pi_m \in \Pi_m(\mu,\nu)$.
As $xh_m(x)\le 0$ and $x \int (y-x)\hat{\pi}_{x}(dy)\ge 0$, we have
\begin{align*}
\int h_m(x)(y-x)\hat{\pi}(dx,dy)=\int h_m(x)\int (y-x)\,\hat{\pi}_x(dy)\,\mu(dx)\le 0.
\end{align*}
Using the definition of $\pi_{m}$, it follows that
\begin{align*}
H(\hat{\pi}|\mu\otimes\nu)&\ge \int h_m(x)(y-x)\,\hat{\pi}(dx,dy)+H(\hat{\pi}| \mu\otimes\nu) \\
&\ge  \int h_m(x)(y-x)\,\pi_m(dx,dy)+H(\pi_m| \mu\otimes\nu) \\
&\stackrel{\eqref{eq:saddle}}{=} \sup_{h\in L_m(\R)}  \int h(x)(y-x)\,\pi_m(dx,dy)+H(\pi_m| \mu\otimes\nu)\\
&\ge \sup_{h\in L_m(\R)}  \int h(x)(y-x)\,\pi_m(dx,dy). \qedhere
\end{align*}
\end{proof}

The following holds for arbitrary couplings in $\Pi_m(\mu,\nu)$, but will be applied to $\pi_{m}$ as defined in Lemma~\ref{lem:saddle}.

\begin{lemma}\label{lem:com}
Let $\pi_m\in \Pi_m(\mu,\nu)$, $m\geq1$. If $(\pi_{m})$ converges weakly, the limit is in  $\widehat{\mathcal{M}}(\mu,\nu)$.
\end{lemma}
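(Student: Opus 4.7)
Let $\pi_\infty$ denote the weak limit of $(\pi_m)$. Since the marginal constraints defining $\Pi(\mu,\nu)$ are tested against bounded continuous functions, $\Pi(\mu,\nu)$ is weakly closed, hence $\pi_\infty\in\Pi(\mu,\nu)$. To conclude $\pi_\infty\in\widehat{\mathcal{M}}(\mu,\nu)$, I would use the contrapositive of the argument that established ``$\leq$'' in \eqref{eq:dualityClaim1}: it suffices to show
\[
 \int h(x)(y-x)\,d\pi_\infty(x,y) \le 0 \qquad\text{for every } h\in C_b(\R) \text{ with } xh(x)\le 0.
\]
Via the Lipschitz approximation argument from the proof of the corollary following Lemma~\ref{lem:duality} (using dominated convergence with majorant $\|h\|_\infty(|x|+|y|)$, which is $\pi_\infty$-integrable since $\mu,\nu$ have finite first moments), we may further reduce to the case $h\in L_{m_0}(\R)$ for some $m_0\in\N$.

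The key idea is a scaling argument. Fix such an $h$ and any $t>0$. For every $m\ge tm_0$ one has $th\in L_m(\R)$: indeed $\|th\|_\infty\le tm_0\le m$, $th$ is $tm_0$-Lipschitz hence $m$-Lipschitz, and $x\cdot(th)(x)=t\,xh(x)\le 0$. By the definition of $\Pi_m(\mu,\nu)$,
\[
 t\int h(x)(y-x)\,d\pi_m(x,y)\;\le\;\sup_{\tilde h\in L_m(\R)}\int \tilde h(x)(y-x)\,d\pi_m\;\le\;H(\hat\pi|\mu\otimes\nu).
\]
Passing to the limit $m\to\infty$ in the leftmost integral (see below), dividing by $t$, and letting $t\to\infty$ then yields the desired inequality, using that $H(\hat\pi|\mu\otimes\nu)<\infty$ by \eqref{eq:inclusion}.

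The main technical obstacle is the limit passage $\int h(x)(y-x)\,d\pi_m \to \int h(x)(y-x)\,d\pi_\infty$: the integrand is continuous but of linear growth in $y$, so bare weak convergence does not suffice. However, since every $\pi_m$ has the same marginals $\mu,\nu$ of finite first moment, a continuous cutoff at level $R$ reduces the integral to that of a bounded continuous function (which converges), while the tail is uniformly bounded by $\|h\|_\infty\bigl(\int_{|x|>R}|x|\,d\mu+\int_{|y|>R}|y|\,d\nu\bigr)$ and tends to zero as $R\to\infty$. This uniform integrability step is standard but is the one point requiring care.
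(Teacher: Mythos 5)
Your proposal is correct and takes essentially the same route as the paper: both exploit the uniform bound $\sup_{h\in L_m(\R)}\int h(x)(y-x)\,d\pi_m\le H(\hat\pi|\mu\otimes\nu)$ through a rescaling argument (the paper divides by $m$, i.e.\ passes to $\sup_{h\in L_1(\R)}\int h(x)(y-x)\,d\pi_m\le H(\hat\pi|\mu\otimes\nu)/m$ and lets $m\to\infty$, while you scale a fixed $h$ by $t$, let $m\to\infty$, then $t\to\infty$), after reducing the membership in $\widehat{\mathcal{M}}(\mu,\nu)$ to testing against bounded continuous $h$ with $xh(x)\le 0$. You merely make explicit two steps the paper leaves implicit (the Lipschitz approximation and the limit passage for the linear-growth integrand via the fixed marginals), and both are handled adequately.
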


\begin{proof}
Let $\pi_{m}\rightarrow\pi$. We need to prove $x \int (y-x)\,\pi_x(dy) \ge 0$ $\mu$-a.s. It suffices to show
\begin{align*}
\int h(x)(y-x)\,\pi(dx,dy)\le 0 \qquad\text{for all }h\in C_b(\R)\text{ satisfying }  x h(x)\le 0.
\end{align*}
By the definition of $\Pi_m(\mu,\nu)$,
\begin{align*}
 \sup_{h\in L_m(\R)} \int h(x)(y-x)\,\pi_m(dx,dy)\le H(\hat{\pi}|\mu\otimes\nu),
\end{align*}
or equivalently
\begin{align*}
\sup_{h\in L_1(\R)} \int h(x)(y-x)\,\pi_m(dx,dy)\le \frac{H(\hat{\pi}|\mu\otimes\nu)}{m}.
\end{align*}
Letting $m\to\infty$, we deduce for any $h\in L_1(\R)$  that
\begin{align*}
\int h(x)(y-x)\,\pi(dx,dy)&\le \liminf_{m\to \infty}\sup_{\tilde h\in L_1(\R)} \int \tilde h(x)(y-x)\,\pi_m(dx,dy)\\
&\le \liminf_{m\to \infty} \frac{H(\hat{\pi}|\mu\otimes\nu)}{m} = 0. 
\end{align*}
By another scaling argument, $\int h(x)(y-x)\,\pi(dx,dy)\le 0$ for all $h\in L_m(\R)$, $m\in \N$. The claim follows by an  approximation of $h\in C_b(\R)$ satisfying $x h(x)\le 0$ by functions in $L_m(\R)$.
\end{proof}

The next lemma will yield weak pre-compactness of $(h_m)_{m\geq1}$, where $h_{m}\in L_{m}(\R)$ was defined in Lemma~\ref{lem:saddle}.

\begin{lemma}\label{lem:bounded}
Recall $P\in \Pi(\mu,\nu)$ as defined in Lemma \ref{lem:P}. There exists $m_0\in \N$ such that
\begin{align}\label{eq:L1bounded}
\int  |h_m(x)|  \Big|\int (y-x)\,P_x(dy)\Big|\,\mu(dx) \le H(P| \mu\otimes\nu) <\infty \qquad \mbox{for all $m\ge m_0$}.
\end{align}
The sequence $(h_m)$ is bounded in $L^1(\tilde{\mu})$, for the finite nonnegative measure $\tilde{\mu}\sim\mu$ defined by
\begin{align*}
\frac{d\tilde{\mu}}{d\mu}(x):=\left|\int (y-x)\,P_x(dy)\right|.
\end{align*}
\end{lemma}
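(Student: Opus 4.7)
The plan is to combine the saddle-point identity of Lemma~\ref{lem:saddle} with the carefully constructed measure~$P$, exploiting that $P$ and $h_m$ have opposite sign patterns so that a one-sided testing inequality becomes a two-sided absolute-value bound.

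\textbf{Step 1 (sign analysis).} Since $h_m\in L_m(\R)$ satisfies $xh_m(x)\le 0$, we have $h_m\le 0$ on $\R^+$ and $h_m\ge 0$ on $\R^-$. By Lemma~\ref{lem:P}, the barycenter $\int(y-x)\,P_x(dy)$ has the opposite sign: strictly positive on $\R^+$, strictly negative on $\R^-$. Hence $h_m(x)\int(y-x)\,P_x(dy)\le 0$ pointwise $\mu$-a.s., and
$$\int h_m(x)(y-x)\,P(dx,dy)=-\int|h_m(x)|\,\Bigl|\int(y-x)\,P_x(dy)\Bigr|\,\mu(dx).$$

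\textbf{Step 2 (test the saddle point against $P$).} By Lemma~\ref{lem:saddle}, $\pi_m$ minimizes $\pi\mapsto \int h_m(x)(y-x)\,\pi(dx,dy)+H(\pi|\mu\otimes\nu)$ over $\Pi(\mu,\nu)$. Plugging in $\pi=P\in\Pi(\mu,\nu)$, which satisfies $H(P|\mu\otimes\nu)<\infty$, gives
$$\int h_m(x)(y-x)\,\pi_m(dx,dy)+H(\pi_m|\mu\otimes\nu)\le \int h_m(x)(y-x)\,P(dx,dy)+H(P|\mu\otimes\nu).$$
By the saddle-point equality \eqref{eq:saddle}, the left-hand side above equals $\sup_{h\in L_m(\R)}\inf_{\pi\in\Pi(\mu,\nu)}\{\int h(x)(y-x)\,\pi(dx,dy)+H(\pi|\mu\otimes\nu)\}$. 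Since the constant $h\equiv 0$ lies in $L_m(\R)$, that supremum dominates $\inf_{\pi\in\Pi(\mu,\nu)}H(\pi|\mu\otimes\nu)=0$, attained at $\mu\otimes\nu$. Combining Steps~1--2 yields \eqref{eq:L1bounded}; in fact $m_0=1$ suffices.

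\textbf{Step 3 ($L^1(\tilde\mu)$ conclusion).} The measure $\tilde\mu$ is finite because $\tilde\mu(\R)\le \int|y-x|\,P(dx,dy)<\infty$ by the finite first moments of $\mu,\nu$, and is equivalent to $\mu$ because its density is strictly positive $\mu$-a.s.\ by Lemma~\ref{lem:P}. The estimate \eqref{eq:L1bounded} is then exactly the uniform bound $\sup_{m\ge m_0}\|h_m\|_{L^1(\tilde\mu)}\le H(P|\mu\otimes\nu)$, as desired.

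I do not anticipate a real obstacle: the entire point of the delicate construction of $P$ in Lemma~\ref{lem:P} was to arrange the opposing sign structure used in Step~1, and the saddle-point formula does the rest. The only minor attentiveness needed is that each integrand is well-defined in $L^1$, which follows from $|h_m|\le m$ combined with the finite first moments of $\mu$ and $\nu$.
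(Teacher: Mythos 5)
Your proof is correct and follows essentially the same route as the paper: test the saddle-point value against the coupling $P$ from Lemma~\ref{lem:P} and use the opposing signs of $h_m$ and the barycenters of $P_x$ to convert the resulting inequality into the $L^1(\tilde\mu)$ bound. The only (harmless) difference is how the nonnegativity of the dual value is obtained: the paper uses the monotone convergence in \eqref{eq:minimax} to $H(\hat\pi|\mu\otimes\nu)>0$, which is why it introduces an $m_0$, whereas you note that $h\equiv 0\in L_m(\R)$ together with $H(\mu\otimes\nu|\mu\otimes\nu)=0$ already gives the lower bound $0$ for every $m$, so $m_0=1$ suffices.
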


\begin{proof}
By~\eqref{eq:minimax} and the definition of $h_{m}$ in Lemma~\ref{lem:saddle},
\begin{align*}
\inf_{\pi\in \Pi(\mu,\nu)} \int h_m(x)(y-x)\,\pi(dx,dy)+H(\pi| \mu\otimes\nu) \;\uparrow \;H(\hat{\pi} | \mu\otimes\nu),\qquad m\to \infty.
\end{align*}
Therefore, as $H(\hat{\pi} | \mu\otimes\nu)>0$,
there exists $m_0\in \R$ such that 
\begin{align*}
\inf_{\pi\in \Pi(\mu,\nu)} \int h_m(x)(y-x)\,\pi(dx,dy)+H(\pi| \mu\otimes\nu)\ge 0, \qquad m\ge m_0.
\end{align*}
Next, let $P\in \Pi(\mu,\nu)$ be as in Lemma \ref{lem:P}. We conclude that for $m\ge m_0$,
\begin{align}\label{eq:easy}
\begin{split}
 0&\le \inf_{\pi \in \Pi(\mu,\nu)} \int h_m(x)(y-x)\,\pi(dx,dy)+H(\pi| \mu\otimes\nu)\\
&\le \int h_m(x)(y-x)\,P(dx,dy)+H(P| \mu\otimes\nu).
\end{split}
\end{align}
As $x\int (y-x)\,P_x(dy)\ge 0$ and $xh_m(x)\leq0$, we have
$
  h_m(x) \int (y-x)\,P_x(dy)\le 0
$
and thus~\eqref{eq:easy} becomes
\begin{align*}
\begin{split}
0&\le -\int  |h_m(x)|  \big|\int (y-x)\,P_x(dy)\big|\,\mu(dx)+H(P| \mu\otimes\nu)
\end{split}
\end{align*}
or equivalently~\eqref{eq:L1bounded}. 

As $P\in \Pi(\mu,\nu)$ and $\mu,\nu$ have finite first moments, $\tilde{\mu}$ is a finite measure, and $\tilde{\mu}\sim \mu$ by Lemma~\ref{lem:P}. The bound~\eqref{eq:L1bounded} can then be rephrased as $\|h_m\|_{L^{1}(\tilde{\mu})}\leq H(P| \mu\otimes\nu)<\infty$.
\end{proof}

\begin{corollary}\label{cor:bdd}
The sequence $(h_m)_{m\geq1}$ is bounded in $\mu$-probability.
\end{corollary}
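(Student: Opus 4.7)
The plan is to deduce boundedness in $\mu$-probability directly from the $L^{1}(\tilde{\mu})$-bound established in Lemma~\ref{lem:bounded}, via Markov's inequality and a routine transfer argument using the equivalence $\tilde{\mu}\sim\mu$. Recall that $d\tilde{\mu}/d\mu = \phi$, where $\phi(x):=|\int(y-x)\,P_{x}(dy)|$, and that $\phi>0$ $\mu$-a.s.\ by the defining property of~$P$ in Lemma~\ref{lem:P}. Markov's inequality gives, for any $M>0$,
\begin{align*}
\tilde{\mu}(\{|h_{m}|>M\}) \le \frac{\|h_{m}\|_{L^{1}(\tilde{\mu})}}{M} \le \frac{H(P|\mu\otimes\nu)}{M}
\end{align*}
uniformly for all $m\ge m_{0}$, so $(h_{m})$ is bounded in $\tilde{\mu}$-probability. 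The remaining task is to convert this into a bound in $\mu$-probability.

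For the transfer, I would fix $\epsilon>0$ and split $\mu$ according to the size of $\phi$. Set $A_{\delta}:=\{\phi>\delta\}$. Since $\phi>0$ $\mu$-a.s., we have $\mu(A_{\delta}^{c})\downarrow 0$ as $\delta\downarrow 0$, so one can choose $\delta>0$ with $\mu(A_{\delta}^{c})<\epsilon/2$. On $A_{\delta}$ one has $\mathbf{1}_{A_{\delta}}\le \delta^{-1}\phi\,\mathbf{1}_{A_{\delta}}$, hence
\begin{align*}
\mu(\{|h_{m}|>M\}) &\le \mu(\{|h_{m}|>M\}\cap A_{\delta}) + \mu(A_{\delta}^{c}) \\
&\le \frac{1}{\delta}\,\tilde{\mu}(\{|h_{m}|>M\}) + \frac{\epsilon}{2} \le \frac{H(P|\mu\otimes\nu)}{\delta M} + \frac{\epsilon}{2}.
\end{align*}
Choosing $M$ large enough makes the right-hand side less than $\epsilon$ uniformly in $m\ge m_{0}$, which yields the claim (the finitely many $h_{m}$ for $m<m_{0}$ can be absorbed by enlarging~$M$).

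\textbf{Main obstacle.} There is no deep obstacle; the only subtle point is that the $L^{1}$-bound of Lemma~\ref{lem:bounded} is with respect to the weighted measure $\tilde{\mu}$, whose density $\phi$ may be arbitrarily small even though it is $\mu$-a.s.\ positive. The density split above is the standard way to handle this and relies only on $\phi>0$ $\mu$-a.s., which is precisely what Lemma~\ref{lem:P} was engineered to guarantee.
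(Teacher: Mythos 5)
Your proof is correct and follows essentially the same route as the paper: Markov's inequality for the $L^{1}(\tilde{\mu})$-bound of Lemma~\ref{lem:bounded}, followed by a routine transfer of boundedness in probability from $\tilde{\mu}$ to $\mu$ using $\tilde{\mu}\sim\mu$. The only cosmetic difference is that the paper phrases the transfer via absolute continuity of the integral of $d\mu/d\tilde{\mu}\in L^{1}(\tilde{\mu})$, whereas you split along the level sets of $d\tilde{\mu}/d\mu$; both are the same standard argument.
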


\begin{proof}
  This follows from $\mu\sim\tilde\mu$ and boundedness of $(h_m)$ in $L^{1}(\tilde\mu)$, by a general fact of measure theory. We recall the argument for the convenience of the reader.
  
  As $\mu,\tilde\mu$ are finite equivalent measures, $\varphi:=d\mu/d\tilde\mu\in L^{1}(\tilde\mu)$. Let $\varepsilon>0$. Then $\varphi\in L^{1}(\tilde\mu)$ yields $\delta>0$  such that $\int_{A} \varphi\, d\tilde\mu\leq \varepsilon$ whenever $A$ is a Borel set with $\tilde\mu(A)\leq \delta$. In particular, choosing $K>0$ such that
 $\tilde{\mu}(|h_m|>K)\leq  K^{-1}\|h_m\|_{L^{1}(\tilde{\mu})}\leq \delta$ for all~$m\geq1$,
 \[
   \mu(|h_m|>K) = \int_{\{\|h_m\|>K\}} \varphi \,d\tilde\mu \leq \varepsilon, \qquad m\geq1. \qedhere
 \]
\end{proof}

\begin{lemma}\label{lem:schrodongerEqns}
  The coupling $\pi_m\in\Pi(\mu,\nu)$ defined in Lemma~\ref{lem:saddle} has a density of the form
\begin{align}\label{eq:rn}
\frac{d\pi_m}{d(\mu\otimes\nu)}(x,y)=e^{f_m(x)+g_m(y)-h_m(x)(y-x)}
\end{align} 
for some functions $(f_m,g_m)\in L^{1}(\mu)\times L^{1}(\nu)$ satisfying 
\begin{align}\label{eq:EOTdualityValuem}
  \int f_{m}(x)\,\mu(dx) + \int g_{m}(y)\,\nu(dy) = \int h_m(x)(y-x)\,\pi_m(dx,dy)+H(\pi_m| \mu\otimes\nu)
\end{align} 
and
\begin{align}\label{eq:schroedinger1}
\begin{split}
\int e^{f_m(x)+g_m(y)-h_m(x)(y-x)}\,\mu(dx)=1 \qquad\forall y\in \R,\\
\int e^{f_m(x)+g_m(y)-h_m(x)(y-x)}\,\nu(dy)=1 \qquad\forall x\in \R.
\end{split}
\end{align} 
\end{lemma}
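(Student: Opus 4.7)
The plan is to recognize the inner minimization
\[
\inf_{\pi\in\Pi(\mu,\nu)} \int h_m(x)(y-x)\,\pi(dx,dy) + H(\pi|\mu\otimes\nu)
\]
as a standard entropic optimal transport problem with cost $c_m(x,y):=h_m(x)(y-x)$, and then invoke the classical EOT duality and potentials theory exactly as already used inside the proof of Lemma~\ref{lem:duality}. Since $h_m\in L_m(\R)$ is bounded by $m$ and $\mu,\nu$ have finite first moments, $|c_m(x,y)|\le m(|x|+|y|)$ belongs to $L^1(\mu\otimes\nu)$, so \cite[Theorem~4.7 and Remark~4.4]{Nutz.20} apply: there exist potentials $(f_m,g_m)\in L^1(\mu)\times L^1(\nu)$ such that the minimizer $\pi_m^{\mathrm{EOT}}$ of the EOT problem has density $e^{f_m(x)+g_m(y)-h_m(x)(y-x)}$ with respect to $\mu\otimes\nu$, and strong duality holds with attainment.

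Next I would identify $\pi_m^{\mathrm{EOT}}$ with the $\pi_m$ from Lemma~\ref{lem:saddle}. The saddle point identity \eqref{eq:saddle} states
\[
\int h_m(x)(y-x)\,\pi_m(dx,dy)+H(\pi_m|\mu\otimes\nu) = \inf_{\pi\in\Pi(\mu,\nu)} \int h_m(x)(y-x)\,\pi(dx,dy)+H(\pi|\mu\otimes\nu),
\]
so $\pi_m$ attains the EOT infimum. Strict convexity of $H(\cdot|\mu\otimes\nu)$ on $\Pi(\mu,\nu)$ yields uniqueness of the EOT minimizer, hence $\pi_m=\pi_m^{\mathrm{EOT}}$, giving \eqref{eq:rn}.

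The value identity \eqref{eq:EOTdualityValuem} is then immediate from EOT strong duality, because for the attaining potentials the dual value equals $\int f_m\,d\mu+\int g_m\,d\nu-\int e^{f_m+g_m-c_m}\,d(\mu\otimes\nu)+1$, while the normalization $\int e^{f_m+g_m-c_m}\,d(\mu\otimes\nu)=\pi_m(\R^2)=1$ reduces this to $\int f_m\,d\mu+\int g_m\,d\nu$.

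Finally, to obtain the pointwise Schrödinger equations \eqref{eq:schroedinger1} (not merely the a.e.\ versions that the marginal conditions produce), I would note that the marginal condition $\pi_m\in\Pi(\mu,\nu)$ forces, for $\mu$-a.e.\ $x$,
\[
\int e^{f_m(x)+g_m(y)-h_m(x)(y-x)}\,\nu(dy)=1,
\]
and symmetrically for $\nu$-a.e.\ $y$. Writing these as $f_m(x)=-\log\int e^{g_m(y)-h_m(x)(y-x)}\,\nu(dy)$ and analogously for $g_m$, I would then redefine $f_m$ and $g_m$ by these formulas on all of $\R$: boundedness of $h_m$ combined with $g_m\in L^1(\nu)$ and $f_m\in L^1(\mu)$ makes the integrals finite everywhere (after possibly setting them to arbitrary values on a $\mu$- or $\nu$-null set where integrability fails), and this redefinition changes $(f_m,g_m)$ only on null sets, preserving \eqref{eq:rn} and \eqref{eq:EOTdualityValuem} while making \eqref{eq:schroedinger1} hold for every $x$ and every $y$. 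There is no real obstacle here; the only mild care required is the measurability and pointwise-versus-a.e.\ bookkeeping in this last step, all of which is routine given the integrability $h_m\in L_m(\R)$, $f_m\in L^1(\mu)$, $g_m\in L^1(\nu)$.
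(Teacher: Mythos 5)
Your main line matches the paper's: you identify $\pi_m$, via the saddle-point identity \eqref{eq:saddle} and strict convexity of the entropy, as the unique minimizer of the EOT problem with cost $c_m(x,y)=h_m(x)(y-x)\in L^1(\mu\otimes\nu)$, and then obtain \eqref{eq:rn} and the value identity \eqref{eq:EOTdualityValuem} from the duality-with-attainment results of \cite{Nutz.20}; this is exactly the citation route the paper takes (it invokes Remark~4.4, Theorem~4.7, Theorem~4.2, Equation~(4.11) and the subsequent discussion there).

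The gap is in your last step, the everywhere version of \eqref{eq:schroedinger1}. The marginal property gives the two identities only $\mu$-a.e.\ and $\nu$-a.e., and your fix---redefining $f_m(x):=-\log\int e^{g_m(y)-h_m(x)(y-x)}\,\nu(dy)$ for every $x$ (and then $g_m$ from the new $f_m$)---requires that this integral be finite at \emph{every} $x\in\R$, not merely $\mu$-a.e. Your justification, that boundedness of $h_m$ together with $f_m\in L^1(\mu)$, $g_m\in L^1(\nu)$ ``makes the integrals finite everywhere,'' does not hold: $g_m\in L^1(\nu)$ gives no control on $\int e^{g_m(y)+sy}\,\nu(dy)$, and at this stage $\nu$ is only assumed to have a finite first moment. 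From the a.e.\ identities one knows finiteness only for the slopes $s=-h_m(x)$ realized at $\mu$-a.e.\ $x$; at an exceptional or off-support point $x^\ast$ the slope $-h_m(x^\ast)$ may exceed all of those, and then $\int e^{g_m(y)-h_m(x^\ast)(y-x^\ast)}\,\nu(dy)=+\infty$ is possible, in which case no real value of $f_m(x^\ast)$ satisfies the equation at $x^\ast$. Your parenthetical escape (``arbitrary values on a null set where integrability fails'') therefore does not produce the asserted ``for all $x\in\R$,'' and this is not a matter of versions: the offending integral is unchanged if $g_m$ is modified on a $\nu$-null set. The paper does not argue this by hand; the everywhere validity is taken from the construction of the Schr\"odinger system in \cite{Nutz.20}, and it matters later (it is emphasized right after the lemma and used, e.g., in Lemma~\ref{lem:equi} and in Step~2 of the proof of Lemma~\ref{lem:hard}, where the system is evaluated at particular points that are only known to lie in sets of positive measure). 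To repair your argument you would either have to prove everywhere-finiteness of the defining integrals in this setting or, like the paper, import the pointwise form of the system directly from the cited reference.
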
 

\begin{proof}
  This follows from the definition of $\pi_{m}$ as the solution of the EOT problem with cost $c(x,y):=h_m(x)(y-x)$. Specifically, we have $c\in L^{1}(\mu\otimes\nu)$ and $c(x,y)\geq -C(1+|x|+|y|)$. The claims then follow from \cite[Remark~4.4, Theorem~4.7, Theorem 4.2, Equation~(4.11) and subsequent discussion]{Nutz.20}. 
\end{proof} 

For later use, we emphasize that the identities in~\eqref{eq:schroedinger1} hold everywhere (not just almost-surely) for the chosen versions of $f_{m},g_{m}$.

Next, we establish bounds and equicontinuity properties of $(f_m,g_m,h_{m})$ which are inspired by \cite{NutzWiesel.21} and crucial for the passage to the limit $m\to\infty$.

\begin{lemma}\label{lem:equi}
Fix $\delta\in (0,1)$. There exist compact sets $\mathcal{X}_{\mathrm{cpt},\delta}, \mathcal{Y}_{\mathrm{cpt},\delta}\subseteq\R$ and Borel sets $$\tilde{A}_{m,\delta}\subseteq A_{m,\delta} \subseteq\mathcal{X}_{\mathrm{cpt},\delta}, \quad B_{m,\delta} \subseteq \mathcal{Y}_{\mathrm{cpt},\delta}\qquad\mbox{with}\qquad \mu(\tilde A_{m,\delta})\wedge \nu(B_{m,\delta})  \ge 1-\delta$$ such that $(h_m\1_{A_{m,\delta}})_{m\in \N}$ is uniformly bounded and for all $m\in \N$,
\begin{align}\label{eq:equi1}
|g_m(y)-g_m(\tilde{y})| &\le \sup_{x\in A_{m,\delta}} |h_m(x)(y-\tilde{y})| -\log(1-\delta)\qquad \forall y, \tilde{y}\in B_{m,\delta},
\end{align}
\begin{align}\label{eq:equi2}
\begin{split}
-\left(\log \int_{B_{m,\delta}} e^{g_{m}(y)-h_m(x)(y-x)} \nu(dy)-\log (1-\delta)\right) & \leq f_{m}(x) \\
& \leq- \log \int_{B_{m,\delta}} e^{g_{m}(y)-h_m(x)(y-x)} \nu(dy) 
\end{split}
\end{align}
for all $x\in \tilde{A}_{m,\delta}$.
\end{lemma}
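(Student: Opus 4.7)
The plan is to build the sets by nested truncations and then extract the two estimates directly from the Schr\"odinger equations \eqref{eq:schroedinger1}. The key point is that the marginal identities $e^{-g_m(y)}=\int e^{f_m(x)-h_m(x)(y-x)}\mu(dx)$ and $e^{-f_m(x)}=\int e^{g_m(y)-h_m(x)(y-x)}\nu(dy)$ allow us to translate ``$\int_A e^{\cdots}\,\mu(dx)\geq(1-\delta)e^{-g_m(y)}$'' into the conditional-probability statement $(\pi_m)^y(A^c)\leq\delta$, and symmetrically for $(\pi_m)_x$. Thus each truncation produces a set on which we can control the restricted Schr\"odinger integrals up to a factor of $1-\delta$, which is precisely the form appearing in \eqref{eq:equi1}--\eqref{eq:equi2}.

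Concretely, fix an auxiliary $\eta=\eta(\delta)>0$ (to be specified). By tightness of $\mu$ and $\nu$, choose compact sets $\mathcal{X}_{\mathrm{cpt},\delta},\mathcal{Y}_{\mathrm{cpt},\delta}\subseteq\R$ with $\mu(\mathcal{X}_{\mathrm{cpt},\delta}^c)\vee\nu(\mathcal{Y}_{\mathrm{cpt},\delta}^c)\leq\eta$, and by Corollary~\ref{cor:bdd} choose $K=K(\eta)$ with $\sup_m \mu(\{|h_m|>K\})\leq\eta$. Put
$$A_{m,\delta}:=\mathcal{X}_{\mathrm{cpt},\delta}\cap\{|h_m|\leq K\},\qquad \mu(A_{m,\delta}^c)\leq 2\eta,$$
so $(h_m\1_{A_{m,\delta}})$ is uniformly bounded. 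Let $(\pi_m)^y$ denote the disintegration of $\pi_m$ with respect to its second marginal $\nu$ and define
$$B_{m,\delta}:=\mathcal{Y}_{\mathrm{cpt},\delta}\cap\bigl\{y:\,(\pi_m)^y(A_{m,\delta}^c)\leq\delta\bigr\}.$$
Since $\int (\pi_m)^y(A_{m,\delta}^c)\,\nu(dy)=\mu(A_{m,\delta}^c)\leq 2\eta$, Markov gives $\nu(B_{m,\delta}^c)\leq\eta+2\eta/\delta$. Finally set
$$\tilde A_{m,\delta}:=A_{m,\delta}\cap\bigl\{x:\,(\pi_m)_x(B_{m,\delta}^c)\leq\delta\bigr\},$$
and by Markov once more, $\mu(\tilde A_{m,\delta}^c)\leq 2\eta+\nu(B_{m,\delta}^c)/\delta$. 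Choosing $\eta$ small enough (e.g.\ $\eta=\delta^3/(2+\delta+2\delta^2)$) ensures $\mu(\tilde A_{m,\delta})\wedge\nu(B_{m,\delta})\geq 1-\delta$.

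For \eqref{eq:equi1}, fix $y,\tilde y\in B_{m,\delta}$. The $x$-marginal identity gives
$$\int_{A_{m,\delta}} e^{f_m(x)+g_m(y)-h_m(x)(y-x)}\mu(dx)=1-(\pi_m)^y(A_{m,\delta}^c)\geq 1-\delta,$$
so $\int_{A_{m,\delta}} e^{f_m(x)-h_m(x)(y-x)}\mu(dx)\geq(1-\delta)e^{-g_m(y)}$. Writing $e^{-h_m(x)(\tilde y-x)}=e^{-h_m(x)(y-x)}\,e^{h_m(x)(y-\tilde y)}$ and bounding the last factor below by $e^{-\sup_{A_{m,\delta}}|h_m(x)(y-\tilde y)|}$,
$$e^{-g_m(\tilde y)}\geq\int_{A_{m,\delta}} e^{f_m(x)-h_m(x)(\tilde y-x)}\mu(dx)\geq e^{-\sup_{A_{m,\delta}}|h_m(x)(y-\tilde y)|}\,(1-\delta)\,e^{-g_m(y)},$$
yielding $g_m(y)-g_m(\tilde y)\leq\sup_{A_{m,\delta}}|h_m(x)(y-\tilde y)|-\log(1-\delta)$; swapping $y$ and $\tilde y$ gives the absolute value. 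For \eqref{eq:equi2}, the $y$-marginal identity $e^{-f_m(x)}=\int e^{g_m(y)-h_m(x)(y-x)}\nu(dy)$ gives the upper bound on $f_m$ immediately since $B_{m,\delta}\subseteq\R$. For the lower bound, if $x\in\tilde A_{m,\delta}$ then
$$\int_{B_{m,\delta}} e^{f_m(x)+g_m(y)-h_m(x)(y-x)}\nu(dy)=1-(\pi_m)_x(B_{m,\delta}^c)\geq 1-\delta,$$
so $\int_{B_{m,\delta}} e^{g_m(y)-h_m(x)(y-x)}\nu(dy)\geq(1-\delta)e^{-f_m(x)}$, which is the required left inequality after taking logs.

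I expect the main (mild) obstacle to be the bookkeeping: the three sets $A_{m,\delta},B_{m,\delta},\tilde A_{m,\delta}$ must be defined in this order because each Markov step consumes a factor of $\delta$, so $\eta$ has to be chosen small relative to $\delta^2$. Everything else is routine once one recognizes that the relevant ``defect'' of each restricted Schr\"odinger integral is exactly the conditional mass $(\pi_m)^y(A_{m,\delta}^c)$ or $(\pi_m)_x(B_{m,\delta}^c)$.
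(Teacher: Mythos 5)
Your proof is correct and takes essentially the same route as the paper: the same set $A_{m,\delta}=\mathcal{X}_{\mathrm{cpt},\delta}\cap\{|h_m|\le K\}$, the same definitions of $B_{m,\delta}$ and $\tilde A_{m,\delta}$ via thresholds on the restricted Schr\"odinger integrals (your conditional masses $(\pi_m)^y(A_{m,\delta}^c)$, $(\pi_m)_x(B_{m,\delta}^c)$ are exactly these, using the everywhere-valid system \eqref{eq:schroedinger1}), Markov-type mass estimates in place of the paper's inline computation with $\kappa=\delta^2$, and the identical sandwich argument for \eqref{eq:equi1}--\eqref{eq:equi2}. The only blemishes are cosmetic: the displayed chain in your \eqref{eq:equi1} step actually bounds $g_m(\tilde y)-g_m(y)$ rather than $g_m(y)-g_m(\tilde y)$ (harmless, as your swap of $y$ and $\tilde y$ covers both signs), and one should say explicitly that the kernels are taken to be the versions given by the density, so that the identities hold for every $y$ and $x$.
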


\begin{proof}
Set $\kappa:=\delta^2<\delta$. Choose compacts $\mathcal{X}_{\text {cpt},\delta}$ and $\mathcal{Y}_{\text{cpt},\delta}$ with $\mu\left(\mathcal{X}_{\text {cpt},\delta}\right) \geq 1-\kappa^2 / 4$ and $\nu\left(\mathcal{Y}_{\text{cpt},\delta}\right) \geq 1-\kappa^2 / 2$. By Corollary~\ref{cor:bdd} there exists $K>0$ such that $\mu(|h_m|> K)\le \kappa^2/4$ for all $m\in \N$. Define $A_{m,\delta}:= \mathcal{X}_{\text {cpt},\delta}\cap \{|h_m|\le K\}$. Then $\mu(A_{m,\delta})\ge 1-\kappa^2/2$. As $\pi_{m} \in \Pi(\mu, \nu)$,
\begin{align}\label{eq:compact}
\pi_{m}\left( A_{m,\delta} \times \mathcal{Y}_{\text {cpt},\delta}\right) \geq 1-\kappa^2 \text {. }
\end{align}
Consider the set
$$
B_{m,\delta}=\left\{y \in \mathcal{Y}_{\mathrm{cpt},\delta}: \int_{A_{m,\delta}} e^{f_{m}(x)+g_{m}(y)-h_m(x)(y-x)} \mu(dx) \geq 1-\kappa\right\}.
$$
We claim that its complement $B_{m,\delta}^c$ satisfies
\begin{align}\label{eq:kappa}
p_{m}:=\nu\left(B_{m,\delta}^c\right) \leq \kappa \text {. }
\end{align}
Indeed, \eqref{eq:schroedinger1} yields
\begin{align}\label{eq:help1}
\int_{A_{m,\delta}} e^{f_{m}(x)+g_{m}(y)-h_m(x)(y-x)} \mu(dx) \leq \int e^{f_{m}(x)+g_{m}(y)-h_m(x)(y-x)} \mu(dx)=1
\end{align}
and thus
\begin{align*}
1-\kappa^2 & \!\!\stackrel{\eqref{eq:compact}}{\leq} \pi_{m}\left(A_{m,\delta} \times \mathcal{Y}_{\mathrm{cpt},\delta}\right)=\int_{\mathcal{Y}_{\mathrm{cpt},\delta}} \int_{A_{m,\delta}}  e^{f_{m}(x)+g_{m}(y)-h_m(x)(y-x)} \mu(d x) \nu(d y) \\
& \leq  \int_{B_{m,\delta}^c} \int_{A_{m,\delta}}   e^{f_{m}(x)+g_{m}(y)-h_m(x)(y-x)} \mu(d x)  \nu(d y)\\
& \quad\:+ \int_{B_{m,\delta}}  \int_{A_{m,\delta}} e^{f_{m}(x)+g_{m}(y)-h_m(x)(y-x)}  \mu(d x)\nu(d y) \\
& \leq (1-\kappa) p_{m}+\left(1-p_{m}\right)=1-p_{m} \kappa,
\end{align*}
which implies \eqref{eq:kappa}. Next, we observe from the definition of $B_{m,\delta}$ that for $y \in B_{m,\delta}$,
\begin{align}\label{eq:key}
\begin{split}
-\left(\log \int_{A_{m,\delta}} e^{f_{m}(x)-h_m(x)(y-x)} \mu(d x)-\log (1-\kappa)\right) & \leq g_{m}(y) \\
& \stackrel{\eqref{eq:help1}}{\leq} \log \int_{A_{m,\delta}} e^{ f_{m}(x)-h_m(x)(y-x)} \mu(d x) .
\end{split}
\end{align}
Let $y, \tilde{y} \in B_{m,\delta}$ and assume without loss of generality that $g_{m}\left(y\right) \geq g_m\left(\tilde y\right)$. Then
\begin{align*}
\left|g_{m}\left(y\right)-g_{m} \left(\tilde y\right)\right| 
& \le \log \int_{A_{m,\delta}} e^{f_m(x)-h_m(x)(\tilde y-x)} \mu(d x)-\log (1-\kappa)  \\
& \quad\; - \log \int_{A_{m,\delta}} e^{f_{m}(x)-h_m(x)(y-x)} \mu(d x) \\
&=\log \int_{A_{m,\delta}} e^{h_m(x)(y-x)-h_m(x)(\tilde y-x)+f_{m}(x)-h_m(x)(y-x)} \mu(d x)- \log (1-\kappa) \\
&\quad\;- \log \int_{A_{m,\delta}} e^{f_m(x)-h_m(x)(y-x)} \mu(dx) \\
& \leq \log \left(e^{\sup _{x \in A_{m,\delta}} \left|h_m(x)(y-x)-h_m(x)(\tilde{y}-x)\right|} \int_{A_{m,\delta}} e^{f_m(x)-h_m(x)(y-x)} \mu(d x)\right) \\
&\quad\;- \log (1-\kappa)- \log \int_{A_{m,\delta}} e^{f_m(x)-h_m(x)(y-x)} \mu(d x) \\
&= \sup _{x \in A_{m,\delta}} \left|h_m(x)(y-\tilde y)\right|- \log (1-\kappa) .
\end{align*}
This concludes the proof of \eqref{eq:equi1}.
Turning to \eqref{eq:equi2}, note that by \eqref{eq:compact}, \eqref{eq:kappa} and the definition of $B_{m,\delta}$,
\begin{align}\label{eq:comapct2}
\begin{split}
\pi_{m}\left(A_{m,\delta} \times B_{m,\delta}\right) & \geq \pi_{m}\left(A_{m,\delta} \times \mathcal{Y}_{\mathrm{cpt},\delta}\right)-\pi_{m}\left(A_{m,\delta}  \times B_{m,\delta}^c\right) \\
& \geq 1-\kappa^2- \int_{B_{m,\delta}^c} \int_{A_{m,\delta}}  e^{f_m(x)+g_m(y)-h_m(x)(y-x)} \mu(d x) \nu(d y) \\
& \geq 1-\kappa^2-\kappa(1-\kappa)=1-\kappa=1-\delta^2,
\end{split}
\end{align}
where we used our definition $\kappa:=\delta^2$ which ensures in particular that $\kappa \in(0, \delta)$. Define
$$
\tilde A_{m,\delta}=\left\{x \in A_{m,\delta}: \int_{B_{m,\delta}} e^{f_m(x)+g_m(y)-h_m(x)(y-x)} \nu(d y) \geq 1-\delta\right\} .
$$
Arguing as for \eqref{eq:kappa} and \eqref{eq:key}, now using \eqref{eq:comapct2} instead of \eqref{eq:compact}, we see that $\mu(\tilde A_{m,\delta}^c) \leq \delta$ and that~\eqref{eq:equi2} holds for $x\in \tilde A_{m,\delta}$.
\end{proof}

\begin{lemma}\label{lem:io}
Recall the sets $\tilde{A}_{m,\delta}$ and $B_{m,\delta}$ from Lemma \ref{lem:equi}. Given $A,B\in\cB(\R)$ with $\mu(A),\nu(B)>\delta$, there exist a subsequence $(m_{k})_{k\geq1}\subseteq\N$ and $x_0,y_0\in\R$ such that $$x_0\in \tilde{A}_{m_{k},\delta}\cap A \quad\text{ and }\quad \quad y_0\in B_{m_{k},\delta}\cap B \quad\text{ for all }\quad k\geq1.$$
\end{lemma}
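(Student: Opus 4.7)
The plan is to reduce the statement to a two-step reverse Fatou (a.k.a.\ Borel--Cantelli type) argument on the finite measure spaces $(\R,\mu)$ and $(\R,\nu)$. From Lemma~\ref{lem:equi} we have $\mu(\tilde A_{m,\delta})\ge 1-\delta$ and $\nu(B_{m,\delta})\ge 1-\delta$ for every $m$. Setting $U_m:=\tilde A_{m,\delta}\cap A$ and $V_m:=B_{m,\delta}\cap B$, the hypotheses $\mu(A)>\delta$ and $\nu(B)>\delta$ then yield the uniform lower bounds
\[
  \mu(U_m)\ge \mu(A)-\delta>0,\qquad \nu(V_m)\ge \nu(B)-\delta>0,\qquad m\ge 1.
\]
No compactness of the ambient sets is actually needed for what follows; only finiteness of $\mu$ and $\nu$.

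First I would extract $x_0$. Since $\mu$ is finite, the reverse Fatou inequality applies to the sequence $(U_m)$: the sets $G_n:=\bigcup_{m\ge n}U_m$ are decreasing with finite measure and satisfy $\mu(G_n)\ge \sup_{m\ge n}\mu(U_m)\ge \limsup_m \mu(U_m)$, hence by continuity of measure from above
\[
  \mu\Bigl(\limsup_m U_m\Bigr)=\lim_n \mu(G_n)\ge \mu(A)-\delta>0.
\]
In particular $\limsup_m U_m\neq\emptyset$, so we may pick $x_0$ in this set. By the very definition of $\limsup$, the point $x_0$ belongs to $U_m$ for infinitely many $m$, which furnishes a subsequence $(m_k)_{k\ge 1}$ with $x_0\in U_{m_k}=\tilde A_{m_k,\delta}\cap A$ for every $k$.

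Second, I would extract $y_0$ along a further subsequence by applying exactly the same argument to $(V_{m_k})_{k\ge 1}$: the bound $\nu(V_{m_k})\ge \nu(B)-\delta>0$ gives $\nu(\limsup_k V_{m_k})>0$, so some $y_0$ lies in $V_{m_{k_j}}$ for a sub-subsequence $(m_{k_j})_{j\ge 1}$. Relabelling $(m_{k_j})$ as $(m_k)$, we still have $x_0\in U_{m_k}$ (since we only passed to a subsequence of the original $x_0$-subsequence) and now also $y_0\in V_{m_k}$ for every $k$, which is precisely the assertion. I do not foresee any real obstacle here: the only ingredients are finiteness of $\mu,\nu$ and the uniform measure bounds from Lemma~\ref{lem:equi}.
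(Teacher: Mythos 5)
Your proof is correct and follows essentially the same route as the paper: the paper's argument with the sets $D_m=\bigcap_{l\ge m}(\tilde A_{l,\delta}\cap A)^c$ is exactly your reverse-Fatou bound $\mu(\limsup_m(\tilde A_{m,\delta}\cap A))\ge\mu(A)-\delta>0$, using finiteness of the measures and the uniform bounds from Lemma~\ref{lem:equi}. Your explicit two-step extraction (first the subsequence for $x_0$, then a further subsequence for $y_0$ along it) is precisely what the paper leaves implicit with ``the proof for $y_0$ is similar.''
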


\begin{proof}
We show the claim for $x_0$; the proof for $y_{0}$ is similar. Define 
\begin{align*}
D_m:= \bigcap_{l=m}^\infty (\tilde A_{l,\delta}\cap A)^c.
\end{align*}
Then $(D_m)_{m\geq1}$ are increasing and the claim is equivalent to $\cap_{m}D_{m}^{c}\neq\emptyset$. %
By Lemma~\ref{lem:equi} we have $\mu(\tilde{A}_{m,\delta})\ge 1-\delta$ and thus $\inf_{m}\mu(\tilde{A}_{m,\delta}\cap A)>0$. As a consequence, 
\begin{align*}
0< \limsup_{m\to \infty} \mu(\tilde{A}_{m,\delta}\cap A) \le \limsup_{m\to\infty}\mu(D_m^c),
\end{align*}
showing that $\mu(\cap_{m}D_{m}^{c})>0$. 
\end{proof}

Applying Lemma \ref{lem:io} with $B=\R$ and taking a subsequence if necessary, we may assume that there exists a common point $y_0\in B_{m,\delta}$ for all $m\in \N$. For the remainder of the proof we choose the normalization 
\begin{align}\label{eq:normalizy0}
  g_m(y_0)=0\quad\text{ for all }\quad m\geq1,
\end{align}
which is achieved by replacing $(f_{m},g_{m})$ with $(f_{m}+g_m(y_0),g_{m}-g_m(y_0))$. We remark that a second normalization has been made implicitly: we have $h_{m}(0)=0$ since $h_{m}$ is continuous and $xh_{m}(x)\le 0$.
For ease of reference, we summarize the consequences of the preceding steps.

\begin{lemma}\label{le:unifBoundsAndTightness}
For any $\delta\in(0,1)$, the sequences
\begin{align*}
  (f_m\1_{\tilde A_{m, \delta}})_{m\geq1}, \quad (g_m\1_{B_{m, \delta}})_{m\geq1}, \quad (h_m\1_{A_{m, \delta}})_{m\geq1} \qquad \mbox{are uniformly bounded.}
\end{align*} 
In particular, $(f_m)_{m\geq1}$ and $(h_m)_{m\geq1}$ are $\mu$-tight, and $(g_m)_{m\geq1}$ is $\nu$-tight.
\end{lemma}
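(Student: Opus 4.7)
The plan is to read off the three uniform bounds directly from Lemma \ref{lem:equi}, using the normalization $g_m(y_0)=0$ fixed in \eqref{eq:normalizy0} and the fact that the sets $\mathcal{X}_{\mathrm{cpt},\delta}$ and $\mathcal{Y}_{\mathrm{cpt},\delta}$ are compact. The bound on $(h_m \1_{A_{m,\delta}})$ is already part of Lemma \ref{lem:equi}: by construction of $A_{m,\delta}$ via Corollary \ref{cor:bdd}, there exists $K=K(\delta)$ with $|h_m|\le K$ on $A_{m,\delta}$ for all $m$.

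For $(g_m \1_{B_{m,\delta}})$, I would apply the Lipschitz-type estimate \eqref{eq:equi1} with $\tilde y = y_0$. Since $y_0\in B_{m,\delta}$ for every $m$ (after passing to the common subsequence obtained from Lemma \ref{lem:io}) and $g_m(y_0)=0$, this yields
\[
|g_m(y)| \le K\,|y-y_0| - \log(1-\delta), \qquad y\in B_{m,\delta},
\]
which is uniformly bounded since $B_{m,\delta}\subseteq\mathcal{Y}_{\mathrm{cpt},\delta}$ is contained in a fixed compact set. For $(f_m\1_{\tilde A_{m,\delta}})$, I would plug the uniform bounds just obtained into the two-sided inequality \eqref{eq:equi2}. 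On $\tilde A_{m,\delta}\times B_{m,\delta}\subseteq\mathcal{X}_{\mathrm{cpt},\delta}\times\mathcal{Y}_{\mathrm{cpt},\delta}$, the quantity $h_m(x)(y-x)$ is bounded in absolute value by a constant $C=C(\delta)$; together with the uniform bound on $g_m$ and the mass bound $\nu(B_{m,\delta})\ge 1-\delta$, the integrand $e^{g_m(y)-h_m(x)(y-x)}$ is squeezed between positive constants, so its $\nu$-integral over $B_{m,\delta}$ lies in a fixed interval bounded above and away from zero. Taking logarithms then yields a uniform bound on $f_m$ on $\tilde A_{m,\delta}$.

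Tightness is then immediate: given $\varepsilon>0$, take $\delta=\varepsilon$. Both $\mu(\tilde A_{m,\delta})\ge 1-\delta$ and $\mu(A_{m,\delta})\ge 1-\delta$, while $|f_m|$ and $|h_m|$ are bounded by $\delta$-dependent constants on $\tilde A_{m,\delta}$ and $A_{m,\delta}$, so the pushforwards $(f_m)_*\mu$ and $(h_m)_*\mu$ each concentrate at least $1-\varepsilon$ of their mass on a common compact interval; the same argument with $B_{m,\delta}$ under $\nu$ handles $(g_m)$. I do not foresee a genuine obstacle: the lemma is essentially a bookkeeping statement that packages the outputs of Lemma \ref{lem:equi} together with the normalization \eqref{eq:normalizy0} into the form needed for the weak-limit passage $m\to\infty$. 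The only care required is to track which of the nested sets $\tilde A_{m,\delta}\subseteq A_{m,\delta}$ each uniform bound lives on, since \eqref{eq:equi2} is only stated on the smaller set $\tilde A_{m,\delta}$.
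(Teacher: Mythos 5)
Your proposal is correct and follows essentially the same route as the paper: the $h_m$ bound is read off from Lemma~\ref{lem:equi}, the $g_m$ bound comes from \eqref{eq:equi1} with $\tilde y=y_0$ and the normalization \eqref{eq:normalizy0}, the $f_m$ bound from squeezing the $\nu$-integral in \eqref{eq:equi2} between positive constants using $\nu(B_{m,\delta})\ge 1-\delta$ and compactness, and tightness follows from the mass bounds on $\tilde A_{m,\delta}$, $A_{m,\delta}$, $B_{m,\delta}$. No gaps beyond those already present in the paper's own argument.
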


\begin{proof}
Recall from Lemma~\ref{lem:equi} that $(h_m\1_{A_{m, \delta}})$ is uniformly bounded and $\mu(A_{m,\delta})>1-\delta$, showing the claims for $(h_{m})$. (In fact, tightness of  $(h_m)$ is tautological with Corollary~\ref{cor:bdd}.) Lemma~\ref{lem:equi} also states that
\begin{align*}
|g_m(y)-g_m(\tilde{y})| &\le \sup_{x\in A_{m,\delta}} |h_m(x)(y-\tilde{y})| -\log(1-\delta)\qquad \forall y, \tilde{y}\in B_{m,\delta}, ~~ m\in \N,
\end{align*}
that $B_{m,\delta}\subseteq\mathcal{Y}_{\text{cpt},\delta}$ and $\nu(B_{m,\delta})\ge 1-\delta$ for all $m\in \N$. Together with~\eqref{eq:normalizy0} and $y_0\in B_{m,\delta}$ this yields
\begin{align}\label{eq:bdd}
|g_m(y)|\le  \sup_{x\in A_{m,\delta}} |h_m(x)(y-y_{0})| -\log(1-\delta) \qquad \forall y \in B_{m,\delta}, ~~ m\in \N.
\end{align}
As the right-hand side is bounded uniformly in $m\in \N$, the claims for  $(g_{m})$ follow.
For $(f_m)$ we use \eqref{eq:equi2}:
\begin{align*}
-\left(\log \int_{B_{m,\delta}} e^{g_{m}(y)-h_m(x)(y-x)} \nu(dy)-\log (1-\delta)\right) & \leq f_{m}(x) \\
& \leq- \log \int_{B_{m,\delta}} e^{g_{m}(y)-h_m(x)(y-x)} \nu(dy)
\end{align*}
for all $x\in \tilde{A}_{m,\delta}$ and all $m\in \N$. By~\eqref{eq:bdd},  $g_m(y)$ is uniformly bounded on $B_{m,\delta}\subseteq\mathcal{Y}_{\text{cpt},\delta}$. Moreover, $y$ is uniformly bounded on that set by compactness, while $h_m(x)$ and $x$ are uniformly bounded on $A_{m, \delta} \subseteq \mathcal{X}_{\text{cpt}, \delta}$. The claims for $(f_{m})$ now follow from $\tilde A_{m, \delta}\subseteq A_{m, \delta}$ and $\mu(\tilde{A}_{m,\delta})\ge 1-\delta$.
\end{proof}

We shall use the following version of Prokhorov's theorem. (Compared, e.g., with \cite[Theorem~3.2.2, p.\,100]{Durrett.10}, this version constructs the limit~$F$ on $(\R,\mu)$ without extending the probability space, which will important for our application in the proof of Lemma~\ref{lem:weak} below.) We denote by $F_*\mu(A):=\mu(F^{-1}(A))$ the pushforward of the measure~$\mu$ by the Borel function $F:\R\to\R^{d}$.

\begin{lemma}\label{lem:prokhorov}
Let $F_{n}:\R\to\R^{d}$ be Borel and let $(F_n)_{n\geq1}$ be $\mu$-tight. After passing to a subsequence, there exists $F:\R\to\R^{d}$ such that $F_{n}\to F$ weakly under~$\mu$; i.e., $(F_n)_*\mu\rightarrow F_*\mu$. 
\end{lemma}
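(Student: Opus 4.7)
The plan is to first extract a subsequence along which $(F_{n_k})_*\mu$ converges weakly to some Borel probability $\lambda$ on $\R^d$, and then construct a Borel $F:\R\to\R^d$ with $F_*\mu=\lambda$. The first step is immediate: $\mu$-tightness of $(F_n)$ is by definition tightness of the family $\{(F_n)_*\mu\}_{n\geq1}\subset \cP(\R^d)$, so classical Prokhorov yields the subsequence and the Borel probability limit~$\lambda$.

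For the construction of~$F$, the subtlety lies in the atoms of $\mu$. Let $\{x_i\}_{i\in I}$ enumerate the (at most countably many) atoms of $\mu$ with masses $p_i:=\mu(\{x_i\})>0$. For each $i$ the sequence $(F_{n_k}(x_i))_k\subset \R^d$ is bounded: if $K_\epsilon\subset \R^d$ is compact with $\sup_n (F_n)_*\mu(\R^d\setminus K_\epsilon)<\epsilon<p_i$, then necessarily $F_n(x_i)\in K_\epsilon$, as otherwise $\mu(F_n^{-1}(\R^d\setminus K_\epsilon))\geq p_i>\epsilon$. By a diagonal argument, refine the subsequence so that $F_{n_k}(x_i)\to y_i^*\in\R^d$ for every $i\in I$, and set $F(x_i):=y_i^*$. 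Applying Portmanteau to open neighborhoods of $y_i^*$ then gives $\lambda(\{y_i^*\})\geq \sum_{j:\,y_j^*=y_i^*} p_j$, so that the residue $\lambda_c := \lambda-\sum_i p_i\delta_{y_i^*}$ (with any coincidences properly aggregated) is a nonnegative finite Borel measure on $\R^d$ of total mass $c:=1-\sum_i p_i$.

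It remains to define $F$ on the atomless part $\mu_c:=\mu-\sum_i p_i\delta_{x_i}$, which is supported on $\R\setminus\{x_i\}_{i\in I}$ and has mass $c$. If $c>0$, then $(\R\setminus\{x_i\}_{i\in I},\mu_c/c)$ is a standard Borel probability space carrying an atomless probability, hence Borel-isomorphic to $([0,1],\mathrm{Leb})$ via some measurable bijection $\psi$. A standard quantile/inverse-CDF construction (combined, if needed, with a Borel isomorphism $\R^d\simeq\R$) yields a Borel $G:[0,1]\to\R^d$ with $G_*\mathrm{Leb}=\lambda_c/c$; setting $F|_{\R\setminus\{x_i\}}:=G\circ\psi$ gives $F_*\mu_c=\lambda_c$, and together with the atomic part, $F_*\mu=\lambda$ as required.

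The main technical point is the diagonal extraction together with the Portmanteau bound, which ensures that the atomic structure of the Prokhorov limit $\lambda$ is compatible with what the atoms of $\mu$ can produce (any extra atomic mass in $\lambda$ being absorbed by the atomless component $\mu_c$); the atomless extension via Borel isomorphism and quantile transform is then routine.
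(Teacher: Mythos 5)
Your proof is correct and follows essentially the same route as the paper: decompose $\mu$ into its atomic and atomless parts, use a diagonal argument so that $F_{n}$ converges at every atom, and realize the remaining limit mass as the pushforward of the atomless part by some Borel map. The only difference is bookkeeping—the paper applies Prokhorov directly to $(F_n)_*\mu_c$ (the atomless restriction), whereas you apply it to the full $(F_n)_*\mu$ and then subtract the atomic contribution, which forces your extra Portmanteau step; there you should invoke closed (shrinking) neighborhoods of $y_i^*$ and the inequality $\lambda(C)\geq\limsup_k\,(F_{n_k})_*\mu(C)$ for closed $C$, since the open-set half of Portmanteau gives a bound in the wrong direction.
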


\begin{proof}
Let $S$ be the set of atoms of $\mu$. By the Lebesgue decomposition theorem, we can decompose $\mu=\mu_{p}+\mu_c$, where $\mu_p=\mu|_{S}$ is a purely atomic measure supported on~$S$ and $\mu_{c}=\mu|_{\R\setminus S}$ is atomless. 

The set $S$ is countable and by tightness, $(F_n(x))_{n}$ is bounded for each $x\in S$. Using a diagonal argument, $(F_n(x))_{n}$ converges for all $x\in S$, after taking a subsequence. If $\mu_c=0$, setting $F(x)=\lim_{n} F_{n}(x)$ for $x\in S$ and $F(x)=0$ for $x\in\R\setminus S$ completes the proof.

Suppose $\mu_c\neq0$. The sequence $(F_n)_*\mu_c =(F_n)_*(\mu-\mu_p)$ is again tight. Applying Prokhorov's theorem to $(F_n)_*\mu_c$ and taking another subsequence, we obtain $(F_n)_*\mu_c\rightarrow \tilde{\mu}$ for a measure $\tilde{\mu}$ with $\tilde{\mu}(\R^{d})=\mu_{c}(\R)$. As $(\R,\mu_{c})$ is atomless, there exists a random vector with law $\tilde{\mu}$; i.e., a Borel function $G:\R\to\R^{d}$ with $G_*\mu_c= \tilde{\mu}$. 

We define $F(x):=\lim_{n} F_{n}(x)$ for $x\in S$ and $F(x):=G(x)$ for $x\in\R\setminus S$.
\end{proof}

\begin{lemma}\label{lem:weak}
  There are Borel functions $\hat{f},\hat{g},\hat{h}:\R\to\R$ such that, after taking a subsequence, $(f_m,h_m,\mathrm{id}_\R)\to (\hat{f},\hat{h},\mathrm{id}_\R)$ weakly under $\mu$ and $(g_m,\mathrm{id}_\R)\to (\hat{g},\mathrm{id}_\R)$ weakly under~$\nu$.
    
Moreover, the functions $U_{m}(x,y):=f_m(x)+g_m(y)-h_m(x)(y-x)$ converge weakly under~$(\mu\otimes\nu)$ to $\hat U(x,y):=\hat f(x)+\hat g(y)-\hat h(x)(y-x)$.
\end{lemma}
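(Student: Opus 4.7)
The plan is to combine the tightness provided by Lemma~\ref{le:unifBoundsAndTightness} with the Prokhorov-type compactness of Lemma~\ref{lem:prokhorov} to extract subsequential weak limits $\hat f,\hat h$ under $\mu$ and $\hat g$ under $\nu$, and then to deduce the weak convergence of $U_m$ under $\mu\otimes\nu$ via a product-space/continuous mapping argument.

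For the first assertion, Lemma~\ref{le:unifBoundsAndTightness} supplies $\mu$-tightness of $(f_m,h_m)$ viewed as $\R^2$-valued Borel maps, and $\nu$-tightness of $(g_m)$. Two applications of Lemma~\ref{lem:prokhorov}, combined on a common subsequence (not relabeled), yield Borel $\hat f,\hat h:\R\to\R$ with $(f_m,h_m)\to(\hat f,\hat h)$ weakly under $\mu$, and Borel $\hat g:\R\to\R$ with $g_m\to\hat g$ weakly under $\nu$. For the joint convergence of $U_m$, I lift to the enriched map $\Phi_m:(\R^2,\mu\otimes\nu)\to\R^5$ defined by $\Phi_m(x,y):=(f_m(x),h_m(x),x,g_m(y),y)$. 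Coordinate-wise tightness of $\Phi_m$ follows from the previous step together with the tightness of $\mu$ and $\nu$, so along a further subsequence $(\Phi_m)_*(\mu\otimes\nu)$ converges weakly on $\R^5$. Independence of $x$ and $y$ under $\mu\otimes\nu$, combined with the fact that the first three coordinates of $\Phi_m$ depend only on $x$ and the last two only on $y$, forces the limit to factor as a product $R_1\otimes R_2$, where $R_1\in\cP(\R^3)$ is the weak limit of $(f_m(x),h_m(x),x)_*\mu$ and $R_2\in\cP(\R^2)$ is the weak limit of $(g_m(y),y)_*\nu$. Re-selecting the Borel realizations in Lemma~\ref{lem:prokhorov} at the level of these enriched limits---compatibly with the marginals already identified---I arrange $R_1=(\hat f,\hat h,\mathrm{id})_*\mu$ and $R_2=(\hat g,\mathrm{id})_*\nu$. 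The continuous map $\psi(a,b,u,c,v):=a+c-b(v-u)$ satisfies $\psi\circ\Phi_m=U_m$ and $\psi(\hat f(x),\hat h(x),x,\hat g(y),y)=\hat U(x,y)$, so the continuous mapping theorem delivers $(U_m)_*(\mu\otimes\nu)\to\hat U_*(\mu\otimes\nu)$.

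The hard part will be the re-selection step ensuring $R_1=(\hat f,\hat h,\mathrm{id})_*\mu$ with the \emph{literal} identity in the third slot, rather than the measure-preserving surrogate that a direct application of Lemma~\ref{lem:prokhorov} to the triple $(f_m,h_m,\mathrm{id})$ produces a priori. Addressing this requires exploiting that the third marginal of $R_1$ is exactly $\mu$ (built into the construction of $\Phi_m$), together with the flexibility afforded by the atomless part of $\mu$ in the proof of Lemma~\ref{lem:prokhorov}, which lets us match the disintegration of $R_1$ while retaining identity in the distinguished coordinate; the analogous remark applies on the $\nu$-side for $\hat g$.
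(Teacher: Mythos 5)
Your overall architecture is the same as the paper's: tightness from Lemma~\ref{le:unifBoundsAndTightness} combined with Lemma~\ref{lem:prokhorov} for the first assertion, and, for the second, the enriched map $(f_m(x),h_m(x),x,g_m(y),y)$, the product structure coming from independence of the two blocks under $\mu\otimes\nu$, and the continuous mapping theorem. The point where you depart from the paper is the explicit ``re-selection'' step, and that is precisely where your proposal has a genuine gap.

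Concretely, you need $R_1=(\hat f,\hat h,\mathrm{id})_*\mu$ with the literal identity in the third slot, and you propose to get it from the fact that the third marginal of $R_1$ is $\mu$ together with ``the flexibility afforded by the atomless part of $\mu$'' in the proof of Lemma~\ref{lem:prokhorov}. That flexibility only says that on an atomless space any prescribed law is the pushforward of \emph{some} Borel map; it is unavailable once the base coordinate is pinned to the identity. Indeed, a measure $R_1\in\cP(\R^3)$ with third marginal $\mu$ equals $(\hat f,\hat h,\mathrm{id})_*\mu$ for some Borel $\hat f,\hat h$ if and only if its disintegration over the third coordinate is $\mu$-a.s.\ a Dirac mass, i.e.\ $R_1$ is concentrated on a graph. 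Tightness and Prokhorov alone cannot guarantee this: weak limits of graph couplings are Young measures and in general not graphs (for $\mu$ uniform on $[0,1]$ and $F_m(x)=\sin(2\pi m x)$, the limit of $(F_m,\mathrm{id})_*\mu$ is the product of the arcsine law with $\mu$, which is not a graph, so no choice of limit function works). Hence ``matching the disintegration of $R_1$ while retaining identity in the distinguished coordinate'' is impossible unless one first shows the disintegration is degenerate, and nothing in your sketch (or in the lemmas you invoke) does that; the same objection applies on the $\nu$-side. For comparison, the paper performs no re-selection: it takes $\hat f,\hat g,\hat h$ from Lemma~\ref{lem:prokhorov} and directly asserts the enriched convergences $(f_m(x),h_m(x),x)_*\mu\to(\hat f(x),\hat h(x),x)_*\mu$ and $(g_m(y),y)_*\nu\to(\hat g(y),y)_*\nu$, then concludes by independence and continuity exactly as you do. So you have correctly isolated the delicate point (which is indeed what is used downstream, e.g.\ in Lemma~\ref{lem:form} and in Step~3 of Lemma~\ref{lem:hard}), but the mechanism you offer to close it does not work as stated.
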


\begin{proof}
For the first part, applying Lemma~\ref{lem:prokhorov} together with Lemma~\ref{le:unifBoundsAndTightness} for $F_m(x)=(f_m(x), h_m(x), x)$ yields a $\mu$-weak limit $F(x)=:(\hat{f}(x),\hat{h}(x),\hat{a}(x))$. In particular $x_*\mu \to \hat{a}_*\mu$, and thus $\hat{a}_*\mu= x_*\mu$. By the same arguments, $(g_m(y), y)_*\nu \rightarrow (\hat g(y),y)_*\nu.$
Thus by independence, $$(f_m(x),h_m(x),x,g_m(y),y)_*(\mu\otimes\nu)\rightarrow (\hat f(x),\hat h(x),x,\hat g(y),y)_*(\mu\otimes\nu).$$ The claim follows.
\end{proof}

Next, we connect the weak convergence of $f_m,g_{m},h_m$ to weak convergence of~$\pi_m$. The limit $\doublehat{\pi}$ will be identified as $\hat{\pi}$ (and eventually as $\pi^{*}$) in subsequent steps.

\begin{lemma}\label{lem:form}
Define the measure $\doublehat \pi$ on $\R^{2}$ by
\begin{align}\label{eq:doublehatPiDefn}
\frac{d\doublehat \pi}{d(\mu\otimes\nu)}(x,y)=e^{\hat f(x)+\hat g(y)-\hat h(x)(y-x)}.
\end{align}
Then $\pi_m \rightarrow \doublehat{\pi}$ weakly for $m\to\infty$.
\end{lemma}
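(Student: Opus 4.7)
My plan is to extract a weakly convergent subsequence of $(\pi_m)$ and identify its limit with $\doublehat\pi$. Since each $\pi_m\in\Pi(\mu,\nu)$ has fixed marginals $\mu,\nu$, the family is tight, so along a subsequence (not relabelled) $\pi_m\to\pi_\infty$ weakly for some $\pi_\infty\in\Pi(\mu,\nu)$. It will suffice to show $\pi_\infty=\doublehat\pi$, since uniqueness of the limit then extends convergence to the full sequence. I will argue via two steps: the measure inequality (i) $\doublehat\pi\le\pi_\infty$ on $\R^2$, and the total-mass identity (ii) $\doublehat\pi(\R^2)=1$; combined with $\pi_\infty(\R^2)=1$, these force equality.

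For (i), fix $\phi\in C_b(\R^2,\R_+)$ and $K>0$. The function
\[
\Psi(a,b,c,x,y):=\phi(x,y)\bigl(e^{a+b-c(y-x)}\wedge K\bigr)
\]
is bounded continuous on $\R^5$, so the joint weak convergence established in Lemma~\ref{lem:weak} yields
\[
\int\phi(x,y)\bigl(e^{U_m(x,y)}\wedge K\bigr)\,\mu(dx)\nu(dy)\longrightarrow\int\phi(x,y)\bigl(e^{\hat U(x,y)}\wedge K\bigr)\,\mu(dx)\nu(dy).
\]
Since $e^{U_m}\wedge K\le e^{U_m}$, the left-hand side is dominated by $\int\phi\,d\pi_m\to\int\phi\,d\pi_\infty$; monotone convergence as $K\uparrow\infty$ gives $\int\phi\,d\doublehat\pi\le\int\phi\,d\pi_\infty$, proving the inequality of measures.

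For (ii), I start from $\int e^{U_m}\,d(\mu\otimes\nu)=1$ and decompose $1=\int(e^{U_m}\wedge K)\,d(\mu\otimes\nu)+\int(e^{U_m}-K)^+\,d(\mu\otimes\nu)$, where the second term is at most $\pi_m(\{U_m>\log K\})$. A uniform tail bound $\sup_m\pi_m(\{U_m>T\})\to 0$ as $T\to\infty$ then lets me pass $m\to\infty$ at fixed $K$ and $K\to\infty$ to deduce $\int e^{\hat U}\,d(\mu\otimes\nu)\ge 1$, which together with (i) at $\phi\equiv 1$ gives $\doublehat\pi(\R^2)=1$. By Markov's inequality the tail estimate reduces to $\sup_m\int U_m^+\,d\pi_m<\infty$. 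Since $\int U_m\,d\pi_m=H(\pi_m|\mu\otimes\nu)$ and $\int U_m^-\,d\pi_m\le e^{-1}$ (using $se^{-s}\le e^{-1}$ for $s\ge 0$), it ultimately suffices to show a uniform entropy bound $\sup_m H(\pi_m|\mu\otimes\nu)<\infty$.

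The uniform entropy bound is the hard part. Testing the $\pi_m$-optimality against the martingale coupling $\bar\pi$ from Assumption~\ref{ass:0}, for which $\int h_m(y-x)\,d\bar\pi=0$, yields $H(\pi_m|\mu\otimes\nu)+\int h_m(y-x)\,d\pi_m\le H(\bar\pi|\mu\otimes\nu)$; it therefore remains to bound $\int h_m(y-x)\,d\pi_m$ from below uniformly in $m$. I expect this to follow from combining the $L^1(\tilde\mu)$-bound on $(h_m)$ in Lemma~\ref{lem:bounded}, the compactness of $\mathrm{supp}\,\mu$ (Assumption~\ref{ass:1}), and the finite first moments of the marginals $\mu,\nu$, perhaps via a Donsker--Varadhan-type inequality $-\int h_m(y-x)\,d\pi_m\le H(\pi_m|\mu\otimes\nu)+\log\int e^{-h_m(y-x)}\,d(\mu\otimes\nu)$ with the log-moment controlled by these ingredients. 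Once the entropy bound is in place, the tail estimate and hence (ii) follow, completing the identification $\pi_\infty=\doublehat\pi$.
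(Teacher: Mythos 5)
Your overall route is genuinely different from the paper's (the paper proves $\pi_m(S)\to\doublehat{\pi}(S)$ directly on continuity sets via a truncation at level $3C$ and a three-$\delta$ estimate based on Lemma~\ref{le:unifBoundsAndTightness}), and most of it is correct: step (i) (domination $\doublehat{\pi}\le\pi_\infty$ via truncation, the joint weak convergence of Lemma~\ref{lem:weak}, and monotone convergence) is fine, and your reduction of (ii) to the uniform bound $\sup_m H(\pi_m|\mu\otimes\nu)<\infty$ (via $\int U_m^-\,d\pi_m\le e^{-1}$ and Markov) is also fine. The genuine gap is that the uniform entropy bound itself is never proved: the last paragraph is explicitly speculative, and the route you sketch does not work as stated. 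Inserting the Donsker--Varadhan inequality $-\int h_m(x)(y-x)\,d\pi_m\le H(\pi_m|\mu\otimes\nu)+\log\int e^{-h_m(x)(y-x)}\,d(\mu\otimes\nu)$ into your comparison $H(\pi_m|\mu\otimes\nu)+\int h_m(x)(y-x)\,d\pi_m\le H(\bar{\pi}|\mu\otimes\nu)$ makes the entropy term cancel, so no bound on $H(\pi_m|\mu\otimes\nu)$ results; a version with a parameter $\lambda>1$ would instead require $\sup_m\log\int e^{-\lambda h_m(x)(y-x)}\,d(\mu\otimes\nu)<\infty$, which the $L^1(\tilde{\mu})$-bound of Lemma~\ref{lem:bounded} cannot deliver: $h_m$ is only bounded by $m$ in sup-norm and may be of order $m$ on a set of small $\tilde{\mu}$-measure, and $\nu$ is not assumed to have exponential moments, so this log-moment can be infinite.

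The gap is easy to close with material already at your disposal, which is why I would call it a missing step rather than a wrong strategy. By the saddle-point identity \eqref{eq:saddle}, $\int h_m(x)(y-x)\,\pi_m(dx,dy)=\sup_{h\in L_m(\R)}\int h(x)(y-x)\,\pi_m(dx,dy)\ge 0$ (take $h\equiv 0$), so your own test against $\bar{\pi}$ (legitimate, since $h_m$ is bounded, $\bar\pi$ is a martingale coupling, and $\pi_m$ minimizes $\pi\mapsto\int h_m(x)(y-x)\,d\pi+H(\pi|\mu\otimes\nu)$ over $\Pi(\mu,\nu)$ by \eqref{eq:saddle}) immediately gives $H(\pi_m|\mu\otimes\nu)\le H(\bar{\pi}|\mu\otimes\nu)<\infty$; equivalently, the chain at the end of the proof of Lemma~\ref{lem:saddle} gives $H(\pi_m|\mu\otimes\nu)\le H(\hat{\pi}|\mu\otimes\nu)$. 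With that bound in place, your tail estimate, the identity $\doublehat{\pi}(\R^2)=1$, and the identification of every subsequential limit of $(\pi_m)$ with $\doublehat{\pi}$ (hence convergence of the full sequence by tightness of $\Pi(\mu,\nu)$) all go through. Alternatively, the uniform tail bound $\sup_m\pi_m(U_m>3C)\le 3\delta$ can be read off directly from Lemma~\ref{le:unifBoundsAndTightness} exactly as in the paper's proof, without any entropy argument.
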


\begin{proof}
Recall from \eqref{eq:rn} that $\pi_m$ has Radon--Nikodym derivative
\begin{align*}
\frac{d\pi_m}{d(\mu\otimes\nu)}(x,y)=e^{f_m(x)+g_m(y)-h_m(x)(y-x)}.
\end{align*} 
To show $\pi_m \rightarrow \doublehat{\pi}$, we fix an arbitrary Borel set $S\subseteq \R\times \R$ with $(\mu\otimes\nu)(\partial S)=0$ and prove
\begin{align}\label{eq:claim}
\lim_{m\to \infty} |\pi_m(S)-\doublehat \pi(S)|=0
\end{align}
($\partial S$ denotes the boundary of~$S$). 
Indeed, fix $\delta>0$ and recall from Lemma \ref{le:unifBoundsAndTightness} that there exists $C>0$ such that
\begin{align}
g_m&\le C\qquad \text{on } B_{m,\delta} \text{ for all }m\in \N, \label{eq:bound}\\
f_m&\le C\qquad \text{on } \tilde{A}_{m,\delta}\text{ for all }m\in \N, \label{eq:bound2}
\end{align}
and that $\mu(\tilde{A}_{m,\delta})\wedge \nu(B_{m,\delta})\ge 1-\delta$.
Recalling also that $(h_m)$ is uniformly bounded on $A_{m,\delta}$, we have,  after enlarging $C>0$ if necessary,
\begin{align*}
\sup_{m\in \N} \mu(|h_m|\ge \sqrt{C})\le \delta/2, \qquad \mu(|x|\ge \sqrt{C}/2)\vee \nu(|y|\ge \sqrt{C}/2) \le \delta/4,
\end{align*}
and thus
\begin{align}\label{eq:bound3}
\begin{split}
\pi_m( |h_m(x)(y-x)|\ge C)&\le \pi_m( |h_m(x)|\ge \sqrt{C})+ \pi_m( |x-y|\ge \sqrt{C})\\
&\le \mu( |h_m(x)|\ge \sqrt{C}) + \mu(|x|\ge \sqrt{C}/2) + \nu(|y|\ge \sqrt{C}/2)\\
&\le \delta/2+\delta/4+\delta/4=\delta.
\end{split}
\end{align}
Define measures $\pi_m^C$ and $\doublehat \pi^C$ by
\begin{align*}
\frac{d \pi_m^C}{d(\mu\otimes\nu)}(x,y)&:= e^{[f_m(x)+g_m(y) -h_m(x)(y-x)]\wedge 3C},\\
\frac{d \doublehat \pi^C}{d(\mu\otimes\nu)}(x,y) &:= e^{[\hat f(x)+\hat g(y) -\hat h(x)(y-x)]\wedge 3C}.
\end{align*}
We first note that by the monotone convergence theorem,
\begin{align}\label{eq:1}
\int_S e^{[\hat f(x) + \hat g(y) -\hat h(x)(y-x)]\wedge 3C}\, \mu(dx) \nu(dy)\uparrow \int_S e^{\hat f(x)+\hat g(y) -\hat h(x)(y-x)}\, \mu(dx)\nu(dy)
\end{align}
for $C\uparrow \infty$.
On the other hand, using \eqref{eq:bound}--\eqref{eq:bound3},
\begin{align}\label{eq:2}
\begin{split}
\pi_m(S)&\ge \pi_m^C(S)\ge \pi_m^C \big(S\cap \{f_m<C\}\cap \{g_m<C\}\cap \{-h_m(x)(y-x)<C\}\big)\\
&= \pi_m \big(S\cap \{f_m<C\}\cap \{g_m<C\}\cap \{-h_m(x)(y-x)<C\}\big)\\
&\ge \pi_m(S)-\mu(f_m\ge C) -\nu(g_m\ge C)-\pi_m(-h_m(x)(y-x)\ge C)\\
&\ge \pi_m(S)-3\delta.
\end{split}
\end{align}
Lastly, 
\begin{align}\label{eq:3}
\Big|\int_S e^{[f_m(x)+g_m (y)-h_m(x)(y-x)]\wedge 3C} -e^{[\hat f(x)+\hat g(y) -\hat h(x)(y-x)]\wedge 3C}\, \mu(dx)\nu(dy)\Big|\to 0
\end{align}
for $m\to \infty$ by Lemma \ref{lem:weak} and $(\mu\otimes\nu)(\partial S)=0$. Combining \eqref{eq:1}--\eqref{eq:3} yields
\begin{align*}
&\lim_{m\to \infty}|\pi_m(S)-\doublehat \pi(S)|\\
&\le \limsup_{C\to\infty} \limsup_{m\to \infty} \big( |\pi_m(S)-\pi_m^C(S)|+|\pi_m^C(S)-\doublehat \pi^C(S)|+|\doublehat \pi^C(S)-\doublehat \pi(S)|\big) \leq 3\delta.
\end{align*}
As $\delta>0$ was arbitrary,  \eqref{eq:claim} follows and the proof is complete. %
\end{proof}

We now come to the key technical step of identifying the limit and showing integrability.

\begin{lemma}\label{lem:hard} 
We have $(\hat f,\hat g)\in L^1(\mu)\times L^1(\nu)$ and $\hat{h}(x)(y-x)\in L^{1}(\hat{\pi})$. Moreover, 
\begin{gather} 
    \hat\pi =\doublehat{\pi}, \nonumber\\
    H(\hat{\pi}| \mu\otimes\nu)= \int \hat{f}(x)\,\mu(dx)+\int \hat{g}(y)\,\nu(dy), \nonumber\\
     \int \hat{h}(x)(y-x)\,\hat{\pi}(dx,dy)  =0 \quad \text{and}\quad  \mu(\{x: x\hat h(x) \le 0\}) = 1.\nonumber %
\end{gather} 
\end{lemma}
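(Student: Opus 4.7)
The plan is to pass to the limit $m\to\infty$ in the identities and bounds carried by the saddle point $(\pi_m,h_m)$ of Lemma~\ref{lem:saddle}, identify $\doublehat\pi$ with $\hat\pi$, and read off the asserted integrability. Weak convergence $\pi_m\to\doublehat\pi$ from Lemma~\ref{lem:form} preserves the marginals, so $\doublehat\pi\in\Pi(\mu,\nu)$; since $\pi_m\in\Pi_m(\mu,\nu)$ by Lemma~\ref{lem:saddle}, Lemma~\ref{lem:com} gives $\doublehat\pi\in\widehat{\mathcal{M}}(\mu,\nu)$, whence $H(\doublehat\pi|\mu\otimes\nu)\ge H(\hat\pi|\mu\otimes\nu)$ by optimality of $\hat\pi$. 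The closed set $\{(x,u)\in\R^2:xu\le 0\}$ supports every $(x,h_m(x))_\ast\mu$, so Portmanteau yields $x\hat h(x)\le 0$ $\mu$-a.s. Combined with $\doublehat\pi\in\widehat{\mathcal{M}}(\mu,\nu)$, this gives $\hat h(x)\int(y-x)\,\doublehat\pi_x(dy)\le 0$ for $\mu$-a.e.\ $x$, hence $\int \hat h(x)(y-x)\,d\doublehat\pi\le 0$ as soon as integrability is available.

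The \emph{main obstacle} is uniform integrability of $(f_m)$ under $\mu$, $(g_m)$ under $\nu$, and $(h_m(x)(y-x))$ under $\pi_m$. Applying Jensen's inequality to the Schr\"odinger equations~\eqref{eq:schroedinger1} (integrating in $\nu$ or $\mu$ and invoking the centering~\eqref{eq:centered}) produces the pointwise upper bounds
\[ f_m(x)\le -\int g_m\,d\nu-xh_m(x),\qquad g_m(y)\le -\int f_m\,d\mu+y\int h_m\,d\mu-\int x h_m\,d\mu. \]
Together with the uniform bounds on large-measure subsets from Lemma~\ref{le:unifBoundsAndTightness} and the a priori bound $\int f_m\,d\mu+\int g_m\,d\nu=H(\pi_m|\mu\otimes\nu)+\int h_m(x)(y-x)\,d\pi_m\uparrow H(\hat\pi|\mu\otimes\nu)$ (from~\eqref{eq:EOTdualityValuem} and~\eqref{eq:minimax}), the task reduces to controlling $\|xh_m\|_{L^1(\mu)}$. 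This is precisely where Assumption~\ref{ass:1} enters: it makes $\supp\mu$ bounded and, via the construction of $P$ in Lemma~\ref{lem:P}, makes $d\tilde\mu/d\mu=|\int(y-x)P_x(dy)|$ bounded below on $\supp\mu$, upgrading the $L^1(\tilde\mu)$ bound of Lemma~\ref{lem:bounded} to an $L^1(\mu)$ bound on $(h_m)$. From there the uniform integrability of all three sequences follows by combining the pointwise bounds with the compactness of $\supp\mu$ and the $\nu$-tightness in Lemma~\ref{le:unifBoundsAndTightness}.

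Once uniform integrability is in hand, the weak convergences of Lemmas~\ref{lem:weak} and~\ref{lem:form} upgrade to convergence of integrals:
\[ \int f_m\,d\mu\to\int\hat f\,d\mu,\qquad \int g_m\,d\nu\to\int\hat g\,d\nu,\qquad \int h_m(x)(y-x)\,d\pi_m\to\int\hat h(x)(y-x)\,d\doublehat\pi, \]
which immediately yields $\hat f\in L^1(\mu)$, $\hat g\in L^1(\nu)$, and $\hat h(x)(y-x)\in L^1(\doublehat\pi)$. Since $\doublehat\pi\in\Pi(\mu,\nu)$ has density~\eqref{eq:doublehatPiDefn},
\[ H(\doublehat\pi|\mu\otimes\nu)=\int\hat f\,d\mu+\int\hat g\,d\nu-\int\hat h(x)(y-x)\,d\doublehat\pi, \]
and passing to the limit in~\eqref{eq:EOTdualityValuem} gives $\int\hat f\,d\mu+\int\hat g\,d\nu=H(\hat\pi|\mu\otimes\nu)$.

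For the final identification, the saddle property of Lemma~\ref{lem:saddle} implies $\int h_m(x)(y-x)\,d\pi_m=\sup_{h\in L_m(\R)}\int h(x)(y-x)\,d\pi_m\ge 0$ (choose $h\equiv 0$), so in the limit $\int\hat h(x)(y-x)\,d\doublehat\pi\ge 0$. Coupled with the opposite inequality from the first paragraph this forces $\int \hat h(x)(y-x)\,d\doublehat\pi=0$, and then the displayed formula gives $H(\doublehat\pi|\mu\otimes\nu)=H(\hat\pi|\mu\otimes\nu)$. Uniqueness of the minimizer of~\eqref{eq:opt3} yields $\doublehat\pi=\hat\pi$, and all three assertions of the lemma follow.
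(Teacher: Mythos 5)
Your overall architecture (identify $\doublehat\pi=\hat\pi$, then read off the integrability) parallels the paper's, but the step you single out as the ``main obstacle'' is resolved by a claim that is both unjustified and actually false under the standing assumptions. You assert that Assumption~\ref{ass:1}, via the construction of $P$ in Lemma~\ref{lem:P}, makes $\frac{d\tilde\mu}{d\mu}(x)=\big|\int(y-x)\,P_x(dy)\big|$ bounded below on $\supp\mu$, so that the $L^1(\tilde\mu)$ bound of Lemma~\ref{lem:bounded} upgrades to a uniform $L^1(\mu)$ bound on $(h_m)$, and from there to uniform integrability of $(f_m)$ under $\mu$, $(g_m)$ under $\nu$, $(h_m(x)(y-x))$ under $\pi_m$, and convergence of the corresponding integrals. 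Nothing in Lemma~\ref{lem:P} provides such a lower bound: the construction only gives strict positivity $\mu$-a.s.\ (the quantity $\epsilon(x^+,x^-)$ may degenerate), and Assumption~\ref{ass:1} is not used for that purpose anywhere. Worse, the conclusion you would reach is contradicted by Example~\ref{ex:strictSeparationHelps} with $b=a$: there Assumptions~\ref{ass:0} and~\ref{ass:1} hold, the potentials are unique up to affine shifts, and $x h(x)\notin L^1(\mu)$, $h(x)(y-x)\notin L^1(\mu\otimes\nu)$. Since $(xh_m(x))_*\mu\to(x\hat h(x))_*\mu$ weakly, a uniform $L^1(\mu)$ bound on $(xh_m)$ would already force $x\hat h\in L^1(\mu)$ by Portmanteau, which is impossible in that example. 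Hence the control to which you ``reduce the task'' cannot exist in general, and with it collapse the claimed convergences $\int f_m\,d\mu\to\int\hat f\,d\mu$, $\int h_m(x)(y-x)\,d\pi_m\to\int\hat h(x)(y-x)\,d\doublehat\pi$, your derivation of $\int\hat h(x)(y-x)\,d\doublehat\pi\ge 0$, and your final identification $\doublehat\pi=\hat\pi$, which in your write-up rests on this machinery.

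The paper's proof is built precisely to avoid this. Step~1 identifies $\doublehat\pi=\hat\pi$ immediately from lower semicontinuity of relative entropy combined with the saddle identities \eqref{eq:saddle}, \eqref{eq:help} and \eqref{eq:minimax}; no integrability of $h_m(x)(y-x)$ beyond the saddle point itself is needed. Step~2 establishes uniform integrability only of the \emph{positive parts} $(f_m^+,g_m^+)$, by evaluating the Schr\"odinger system \eqref{eq:schroedinger1} at finitely many well-chosen points ($C<0<D$ and $E\le s_-<s_+\le F$, the latter supplied by Assumption~\ref{ass:1} together with Lemmas~\ref{le:unifBoundsAndTightness} and~\ref{lem:io}); this is the only place Assumption~\ref{ass:1} enters, and it never yields (nor requires) an $L^1(\mu)$ bound on $h_m$ or $xh_m$. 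Step~3 then sandwiches $\int\hat f\,d\mu+\int\hat g\,d\nu$: from above by $H(\hat\pi|\mu\otimes\nu)$ using $x\hat h(x)\le0$, $\doublehat\pi(\R^2)=1$ and the dual formula of Lemma~\ref{lem:duality}, and from below via \eqref{eq:EOTdualityValuem} and a $\limsup$ estimate that uses only the UI of the positive parts. Finally, $\hat h(x)(y-x)\in L^1(\hat\pi)$ and $\int\hat h(x)(y-x)\,d\hat\pi=0$ are deduced from finiteness of $H(\hat\pi|\mu\otimes\nu)$ and the already-established integrability of $\hat f,\hat g$, not from any limit of $\int h_m(x)(y-x)\,d\pi_m$. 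To repair your argument you would need to replace your two-sided uniform-integrability step by a one-sided scheme of this kind.
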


\begin{proof}
\emph{Step 1.} We first show $\doublehat{\pi}=\hat{\pi}$. 
As $\pi_m\rightarrow\doublehat{\pi}$ by Lemma~\ref{lem:form} and $\pi_m\in \Pi_m(\mu,\nu)$ by Lemma~\ref{lem:saddle}, we have $\doublehat{\pi}\in \widehat{\mathcal{M}}(\mu,\nu)$ by Lemma~\ref{lem:com}. Recall from~\eqref{eq:saddle} that
\begin{align}\label{eq:help}
\begin{split}
&\int h_m(x)(y-x)\,\pi_m(dx,dy)+H(\pi_m|\mu\otimes \nu)\\
&= \sup_{h\in L_m(\R)} \int h(x)(y-x)\,\pi_m(dx,dy)+H(\pi_m|\mu\otimes \nu)\\
&\ge H(\pi_m|\mu\otimes \nu).
\end{split}
\end{align}
Using the lower semicontinuity of $\pi\mapsto H(\pi|\mu\otimes\nu)$, we conclude that
\begin{align} \label{eq:inequality}
\begin{split}
H(\doublehat{\pi}|\mu\otimes \nu)&\le \liminf_{m\to \infty} H(\pi_m|\mu\otimes \nu)\\
&\stackrel{\eqref{eq:help}}{\le}  \liminf_{m\to \infty} \int h_m(x)(y-x)\,\pi_m(dx,dy)+H(\pi_m|\mu\otimes \nu) \\
&\stackrel{\eqref{eq:saddle}, \eqref{eq:minimax}}{=} \inf_{\pi\in \widehat{\mathcal{M}}(\mu,\nu)} H(\pi| \mu\otimes\nu)
\end{split}
\end{align}
and hence $\doublehat{\pi}=\hat{\pi}$. 

\emph{Step 2.} 
Our next goal is to show that $(f_m^+, g_m^+)_{m}$ is $(\mu\otimes\nu)$-uniformly integrable. Note that the Schr\"odinger system \eqref{eq:schroedinger1} can be stated as
\begin{align}\label{eq:schroedinger}
\begin{split}
\int e^{f_m(x)-h_m(x)(y-x)}\,\mu(dx)& = e^{-g_m(y)} \qquad \forall y\in \R,\\
\int e^{g_m(y)-h_m(x)y}\,\nu(dy)& = e^{-f_m(x)-h_m(x)x} \qquad \forall x\in \R.
\end{split}
\end{align}
As $\mu$ is centered and $\mu\neq \delta_0$, we have $\mu((-\infty,0))\wedge \mu((0,\infty))>0$. After taking subsequences, Lemmas~\ref{le:unifBoundsAndTightness} and~\ref{lem:io} yield numbers $C<0<D$ such that $(f_m(C),h_m(C),f_m(D), h_m(D))$ is bounded uniformly in $m\in \N$.
Recall from Assumption~\ref{ass:1} that $\nu((-\infty,s-])>0$ and $\nu([s+, \infty))>0$, where $s_-, s_+$ are the left and right endpoints of the support of~$\mu$. Applying again Lemmas~\ref{le:unifBoundsAndTightness} and~\ref{lem:io}, and taking another subsequence, there exist numbers $E\le s_-<s_+\le F$ such that $(g_m(E), g_m(F))$ is bounded uniformly in $m\in\N$.

Recalling that $h_m(x)x\le 0$, we have $h_m(C)\geq0$ and $h_m(D)\leq0$. Using also that 
$\sgn(E-x)\sgn(F-x)\leq 0$ for $x\in [E,F]$, we obtain
\begin{align*}
e^{f_m^+(x)} &\le 1 + e^{f_m(x)-h_m(x)(E-x)} + e^{f_m(x)-h_m(x)(F-x)}\qquad \forall x\in [E,F],\\
e^{g_m^+(y)} &\le 1 + e^{g_m(y)-h_m(C)y} + e^{g_m(y)-h_m(D)y}\qquad \forall y\in \R
\end{align*}
for all $m\in \N$. Combining this with \eqref{eq:schroedinger},
\begin{align*}
\int e^{f_m^+(x)}\,\mu(dx) &\le \int [1 + e^{f_m(x)-h_m(x)(E-x)} + e^{f_m(x)-h_m(x)(F-x)}]\,\mu(dx)\\
&= 1 + e^{-g_m(E)}+e^{-g_m(F)}
\end{align*}
and
\begin{align*}
\int e^{g_m^+(y)}\,\nu(dy) &\le \int [1 + e^{g_m(y)-h_m(C)y} + e^{g_m(y)-h_m(D)y}]\,\nu(dy)\\
&\le 1 + e^{-f_m(C)-h_m(C)C} +e^{-f_m(D)-h_m(D)D}.
\end{align*}
Recalling boundedness of $(f_m(C),h_m(C),f_m(D), h_m(D), g_m(E), g_m(F))$ and the de la Vall\'ee-Poussin theorem, we have shown that $(f_m^+, g_m^+)_{m}$ is $(\mu\otimes\nu)$-uniformly integrable.

\emph{Step 3.} As $(h_m(x),x)_*\mu \rightarrow (\hat{h}(x),x)_*\mu$, we also have $(xh_m(x))_*\mu \rightarrow (x\hat{h}(x))_*\mu$ and thus
\begin{align}\label{eq:clear1}
\mu(\{x: x\hat{h}(x)\le 0\})\ge \limsup_{m\to \infty} \mu( \{x: xh_m(x)\le 0\})=1
\end{align}
by the Portmanteau theorem. Recalling that $\doublehat{\pi}=\hat{\pi}$ is a probability measure, 
\begin{align}\label{eq:clear2}
\int e^{\hat{f}(x)+\hat{g}(y)-\hat{h}(x)(y-x)}\,\nu(dy)\mu(dx)=1.
\end{align}
Combining \eqref{eq:clear1} and \eqref{eq:clear2} with Lemma \ref{lem:duality} yields
\begin{align*}
\int \hat f(x)\,\mu(dx) +\int \hat g(y)\,\nu(dy)&\le 
\sup_{h\in C_b(\R), xh(x)\le 0} \: \sup_{f,g\in C_b(\R)} \int f(x)\,\mu(dx)+\int g(y)\,\nu(dy) \\
&\qquad-\int e^{f(x)+g(y)-h(x)(y-x)}\,\nu(dy)\mu(dx)+1\\
&= H(\hat{\pi}|\mu\otimes\nu).
\end{align*}
On the other hand, by \eqref{eq:inequality} and~\eqref{eq:EOTdualityValuem} we have
\begin{align*}
H(\hat{\pi}|\mu\otimes\nu)
&\le \liminf_{m\to \infty}   \int h_m(x)(y-x)\,\pi_m(dx,dy)+H(\pi_m|\mu\otimes \nu)\\
&= \liminf_{m\to \infty} \int f_m(x)\,\mu(dx) + \int g_m(y)\,\nu(dy) \\
&\le \limsup_{m\to \infty} \int f_m(x)\,\mu(dx) + \limsup_{m\to \infty}\int g_m(y)\,\nu(dy).
\end{align*}
The weak convergence $(f_m)_*\mu\rightarrow \hat{f}_*\mu$ and $(g_m)_*\nu \rightarrow \hat{g}_*\nu$,  together with the $(\mu\otimes\nu)$-uniform integrability of $(f_m^+, g_m^+)$ and the Portmanteau theorem, yields $(\hat f^{+},\hat g^{+})\in L^{1}(\mu)\times L^{1}(\nu)$ and using again the Portmanteau theorem,
\begin{align*}
\limsup_{m\to \infty} \int f_m(x)\,\mu(dx) + \limsup_{m\to \infty}\int g_m(y)\,\nu(dy)
&\le \int \hat f(x)\,\mu(dx) +\int \hat g(y)\,\nu(dy).
\end{align*}
Putting the last three displays together, we see that equality holds throughout. In particular, this yields
\begin{align}\label{eq:hatDualValueFormula}
H(\hat{\pi}|\mu\otimes\nu)&=  \int \hat f(x)\,\mu(dx) +\int \hat g(y)\,\nu(dy)
\end{align} 
and thus $(\hat f,\hat g)\in L^1(\mu)\times L^1(\nu)$.

In general, if $\pi\in\Pi(\mu,\nu)$ satisfies $\pi\ll\mu\otimes\nu$, then $\log(d\pi/d(\mu\otimes\nu))^{-}\in L^{1}(\pi)$ as $x\log x$ is bounded from below. If $H(\pi|\mu\otimes\nu)=\int \log(d\pi/d(\mu\otimes\nu))\,d\pi <\infty$, the positive part is also integrable, so that
$\log(d\pi/d(\mu\otimes\nu))\in L^{1}(\pi)$. Applying this to $\hat\pi=\doublehat\pi$ and using~\eqref{eq:doublehatPiDefn}, we see that $\hat f(x)+ \hat g(y)-\hat{h}(x)(y-x)\in L^{1}(\hat\pi)$, and as $(\hat f,\hat g)\in L^1(\mu)\times L^1(\nu)$, it follows that $\hat{h}(x)(y-x)\in L^{1}(\hat\pi)$. In particular, we can write $H(\hat{\pi}|\mu\otimes\nu)=\int \log(d\hat\pi/d(\mu\otimes\nu))\,d\hat\pi$ as 
\begin{align*}
H(\hat{\pi}|\mu\otimes\nu)&= \int \hat{f}(x)\,\mu(dx)+\int \hat{g}(y)\,\nu(dy)-\int \hat{h}(x)(y-x)\,\hat{\pi}(dx,dy)
\end{align*} 
and conclude by~\eqref{eq:hatDualValueFormula} that $\int \hat{h}(x)(y-x)\,\hat{\pi}(dx,dy) = 0$.
\end{proof}
 
\begin{lemma}\label{lem:martingale}
We have $\hat \pi\in \mathcal{M}(\mu,\nu)$.
\end{lemma}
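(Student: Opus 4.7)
The goal is to prove $\int(y-x)\,\hat\pi_x(dy) = 0$ $\mu$-a.s. Set $\varphi(x) := \int(y-x)\,\hat\pi_x(dy)$, which lies in $L^1(\mu)$ because $\nu$ has finite first moment, and which satisfies $x\varphi(x)\ge 0$ $\mu$-a.s.\ by $\hat\pi \in \widehat{\mathcal{M}}(\mu,\nu)$.

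The first key step is to establish $\hat h(x)\varphi(x) = 0$ $\mu$-a.s. For $x\neq 0$, the sign conditions $x\hat h(x)\le 0$ and $x\varphi(x)\ge 0$ force $\hat h(x)$ and $\varphi(x)$ to have opposite signs, so $\hat h\varphi \le 0$ there. At $x = 0$, continuity of $h_m$ together with $xh_m(x)\le 0$ gives $h_m(0) = 0$, and the construction in Lemma~\ref{lem:prokhorov} preserves pointwise limits at atoms of $\mu$ (and on a $\mu$-null set we may simply impose $\hat h(0)=0$), so $\hat h(0) = 0$; hence $\hat h\varphi \le 0$ $\mu$-a.s.\ throughout. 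Combining this with the identity $\int \hat h(x)\varphi(x)\, \mu(dx) = \int \hat h(x)(y-x)\, d\hat\pi = 0$ from Lemma~\ref{lem:hard} (Fubini is justified since $\hat h(x)(y-x) \in L^1(\hat\pi)$) yields $\hat h\varphi = 0$ $\mu$-a.s.

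This already gives $\varphi = 0$ on $\{\hat h\ne 0\}$, so the analysis reduces to $S := \{\hat h = 0\}$, which contains~$0$. On~$S$, the density of $\hat\pi$ reduces to $e^{\hat f(x)+\hat g(y)}$, and the first-marginal relation $\int (d\hat\pi/d(\mu\otimes\nu))(x,y)\, \nu(dy) = 1$ (which holds $\mu$-a.s.) forces $\hat f \equiv -\log C_g$ on $S$, where $C_g := \int e^{\hat g}\, d\nu$. A direct computation then yields $\varphi(x) = M - x$ for $\mu$-a.e.\ $x \in S$, where $M := C_g^{-1}\int y\, e^{\hat g}\, d\nu$.

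I would finish with a short case analysis using the centering identity $\int\varphi\, d\mu = 0$ (which follows from $\mu,\nu$ being centered). The $\widehat{\mathcal{M}}$-constraint gives $M - x \ge 0$ on $S\cap(0,\infty)$ and $M - x \le 0$ on $S\cap(-\infty,0)$; if both sets had positive $\mu$-measure we would obtain $M > 0$ and $M < 0$, a contradiction. By symmetry we may thus assume $\mu(S\cap(-\infty,0)) = 0$, whence $\int\varphi\, d\mu$ reduces to $\int_{S\cap(0,\infty)}(M - x)\, d\mu + \mu(\{0\})\,M = 0$. If $\mu(S\cap(0,\infty)) > 0$, then $M > 0$, both summands are nonnegative and must vanish, forcing $\mu(\{0\}) = 0$ and $x = M$ $\mu$-a.e.\ on $S\cap(0,\infty)$, so $\varphi = M - M = 0$ there. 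Otherwise $\mu(S\cap(0,\infty)) = 0$ and the remaining identity $\mu(\{0\})\,M = 0$ gives either $\mu(\{0\}) = 0$ (so $\mu(S) = 0$) or $M = 0$ (so $\varphi(0) = 0$). In all cases $\varphi = 0$ $\mu$-a.s., proving $\hat\pi \in \mathcal{M}(\mu,\nu)$. The main subtlety I anticipate is the handling of a possible atom of $\mu$ at $0$, where the $\widehat{\mathcal{M}}$-constraint is vacuous; the identification $\hat h(0) = 0$ via pointwise limits of the approximants is what lets the pointwise identity $\hat h\varphi = 0$ remain valid there.
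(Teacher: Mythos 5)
Your proof is correct and follows essentially the same strategy as the paper: combine $\int \hat h(x)(y-x)\,d\hat\pi=0$ with the sign conditions $x\hat h(x)\le 0$ and $x\int(y-x)\,\hat\pi_x(dy)\ge 0$ to get $\hat h\,\varphi=0$ $\mu$-a.s., then use the normalization $\int e^{\hat f(x)+\hat g(y)-\hat h(x)(y-x)}\,\nu(dy)=1$ on $\{\hat h=0\}$ together with the centering of $\mu,\nu$ to exclude any violation of the martingale property. The only difference is packaging: the paper derives a contradiction from two points $x_2\le 0\le x_1$ with nonzero barycenter shifts, whereas you compute the common conditional mean $M$ on $\{\hat h=0\}$ and run a direct case analysis, including the explicit treatment of a possible atom at $0$ (which the paper handles implicitly via $h_m(0)=0$).
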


\begin{proof}
As $\hat\pi\in\widehat{\mathcal{M}}(\mu,\nu)$, we have  $\int (y-x)\,\hat\pi_x(dy) \ge (\leq)\: 0$ for $\mu$-a.e.\ $x\geq (\leq)\:0$. Suppose for contradiction that $\hat\pi\notin\mathcal{M}(\mu,\nu)$; that is, $A=\{x:\,\int (y-x)\,\hat{\pi}_x(dy)\neq 0\}$ satisfies $\mu(A)>0$. 
As $\mu,\nu$ are both centered, $A_{\pm}:=\{x:\int (y-x)\,\hat{\pi}_x(dy)> (<) \;0\}$ then both have positive $\mu$-mass. Moreover, we must have $A_{\pm}\subseteq\R_{\pm}$ $\mu$-a.s. In particular, there exists a non-negligible set of points  $x_2\le 0\le  x_1$ for which 
\begin{align}\label{eq:mean1}
\int (y-x_1)\,\hat\pi_{x_1}(dy)>0 \qquad \text{and} \qquad \int (y-x_2)\,\hat\pi_{x_2}(dy)<0.
\end{align}
As 
\begin{align*}
\int \hat h(x) \int (y-x)\hat \pi_x(dy)\mu(dx)&=\int  \hat h(x)(y-x)\,\hat \pi(dx,dy)=0
\end{align*}
by Lemma \ref{lem:hard},
we obtain that $\hat h(x_1)=\hat h(x_2)=0$ for $\mu$-a.a. $x_1\in A_+$ and $x_2\in A_-$, so that \eqref{eq:mean1} can be stated as 
\begin{align}\label{eq:mean1a}
\int ye^{\hat f(x_1)+\hat g(y)}\,\nu(dy)>x_1 \qquad \text{and} \qquad \int y e^{\hat f(x_2)+\hat g(y)}\,\nu(dy)<x_2.
\end{align}
Combining \eqref{eq:mean1a} and $x_2\le 0\le  x_1$ we obtain
\begin{align}\label{eq:mean3}
\int y e^{\hat f(x_2)+\hat g(y)}\,\nu(dy)<x_2\le x_1<\int ye^{\hat f(x_1)+\hat g(y)}\,\nu(dy)
\end{align}
and in particular $\hat f(x_1)\neq\hat f(x_2)$. 
On the other hand,
\begin{align*}
1=\int e^{\hat f(x_i) +\hat g(y)-\hat h(x_i)(y-x_i)}\nu(dy)=\int e^{\hat f(x_i)+\hat g(y)}\nu(dy)
\end{align*}
for $i=1,2$ implies $\hat f(x_1)=\hat f(x_2)$, a contradiction.
\end{proof}

\begin{proof}[Proof of Theorem \ref{thm:main}: Existence and Characterization]
As $\hat{\pi}\in \mathcal{M}(\mu,\nu)$ by Lemma~\ref{lem:martingale}, it follows from~\eqref{eq:inclusion} that $\pi^\ast=\hat{\pi}$. The density of that coupling has the desired form and properties by Lemmas~\ref{lem:form} and~\ref{lem:hard}, except that it remains to verify $\hat{h}(x)(y-x)\in  L^1(\pi)$ for all $\pi\in \mathcal{M}(\mu,\nu)$ satisfying $H(\pi|\mu\otimes \nu)<\infty$. Indeed, given such $\pi$, \cite[Lemma~1.4]{Nutz.20} yields that $\log(d\pi^{*}/d(\mu\otimes\nu))^{+}\in L^{1}(\pi)$. As $\log(d\pi^{*}/d(\mu\otimes\nu))= \hat{f}(x)+\hat{g}(y)-\hat{h}(x)(y-x)$ with $(\hat f,\hat g)\in L^1(\mu)\times L^1(\nu)$, it follows that $(\hat{h}(x)(y-x))^{-}\in L^{1}(\pi)$. Now the martingale property of $\pi$ and the Fubini--Tonelli theorem for kernels imply that $\int[ \hat{h}(x)(y-x)]^-\,\pi(dx,dy)=\int[ \hat{h}(x)(y-x)]^+\,\pi(dx,dy)$, so that
$\int \hat{h}(x)(y-x)\,\pi(dx,dy)=0$ and in particular $\hat{h}(x)(y-x)\in  L^1(\pi)$. This completes the existence proof.

For the characterization, suppose that $\pi\in \mathcal{M}(\mu,\nu)$ has a density of the form~\eqref{eq:opt1} with $(f,g,h)\in\cD$. Then it follows from Lemma~\ref{le:weakDuality} that $\pi=\pi^{*}$ is the primal optimizer (and also that $(f,g,h)$ are dual optimizers).
\end{proof}

It remains to show the uniqueness part of Theorem \ref{thm:main}. 
Given potentials $(f,g,h)$ and constants $\alpha,\beta\in\R$, the functions 
$f'(x) :=f(x)+ \alpha x + \beta$, $g'(y):=g(y)-\alpha y-\beta$ and $h'(x):= h(x)-\alpha$ satisfy $f'(x)+g'(y)-h'(x)(y-x)=f(x)+g(y)-h(x)(y-x)$, so that $(f',g',h')$ are again potentials. The next result implies that this affine shift is the only source of non-uniqueness, and completes the proof of Theorem~\ref{thm:main}.

\begin{lemma}\label{le:uniqueness}
Let $\nu\neq\delta_{0}$ and $\pi\in\mathcal{P}(\R\times\R)$.  Suppose that
\begin{align*}
  \frac{d\pi}{d(\mu\otimes\nu)}(x,y)=e^{f(x)+g(y)-h(x)(y-x)}=e^{f'(x)+g'(y)-h'(x)(y-x)} \quad\pi\as
\end{align*} 
for some functions $f,f',g,g',h,h':\R\to\R$. Then there are $\alpha,\beta\in\R$ such that 
\begin{align*}
    f'(x) - f(x) = \alpha x + \beta \quad \mu\as, \qquad g(y)-g'(y)=\alpha y+\beta \quad \nu\as,\qquad  h-h'= \alpha \quad \mu\as
\end{align*} 
\end{lemma}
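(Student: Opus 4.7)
The plan is to reduce the problem to showing that an $\mu\otimes\nu$-a.s.\ equation of the form $G(y)-F(x)=H(x)(y-x)$ forces $F,G$ to be affine and $H$ to be constant, where $F:=f'-f$, $G:=g-g'$ and $H:=h-h'$. First I would observe that since the density $d\pi/d(\mu\otimes\nu)=e^{f(x)+g(y)-h(x)(y-x)}$ is strictly positive (as a finite exponential) wherever it is defined, we have $\pi\sim\mu\otimes\nu$, so ``$\pi$-a.s.'' is the same as ``$\mu\otimes\nu$-a.s.''. Taking logarithms of the equality $e^{f+g-h(y-x)}=e^{f'+g'-h'(y-x)}$ and rearranging gives
\begin{equation*}
G(y)-F(x)=H(x)(y-x)\qquad (\mu\otimes\nu)\text{-a.s.}
\end{equation*}

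Next I would apply Fubini's theorem to locate a single ``good'' point $x_0\in\R$ such that the above identity holds for $\nu$-a.e.\ $y$ with $x=x_0$ fixed. Solving for $G$ gives
\begin{equation*}
G(y)=H(x_0)\,y+\bigl(F(x_0)-H(x_0)x_0\bigr)\qquad \nu\text{-a.s.},
\end{equation*}
so we define $\alpha:=H(x_0)$ and $\beta:=F(x_0)-H(x_0)x_0$, yielding $G(y)=\alpha y+\beta$ $\nu$-a.s.\ immediately.

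To pin down $H$, I would use the assumption $\nu\neq\delta_0$ to select two distinct points $y_1,y_2$ in the $\nu$-full set on which Fubini's theorem guarantees the original identity holds for $\mu$-a.e.\ $x$. Subtracting the two resulting equations,
\begin{equation*}
G(y_1)-G(y_2)=H(x)(y_1-y_2),
\end{equation*}
so $H(x)=(G(y_1)-G(y_2))/(y_1-y_2)$ is constant $\mu$-a.s.; by the formula for $G$ just derived this constant equals $\alpha$. Finally, plugging $H\equiv\alpha$ and $G(y)=\alpha y+\beta$ back into $G(y)-F(x)=H(x)(y-x)$ (for any $y$ in the $\nu$-full good set) gives $F(x)=\alpha x+\beta$ $\mu$-a.s.

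There is no genuinely hard step here; the argument is purely measure-theoretic linear algebra. The only subtle point is being careful about which almost-sure set the identity is interpreted on (hence the remark that $\pi\sim\mu\otimes\nu$ from positivity of the density), and using Fubini in both directions: once to extract a single $x_0$ that produces the affine form of $G$, and once to extract two distinct $y_1,y_2$ (available because $\nu\neq\delta_0$) that force $H$ to be constant. The hypothesis $\nu\neq\delta_0$ is used in exactly this latter step.
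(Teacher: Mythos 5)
Your proof is correct and follows essentially the same route as the paper's: after noting $\pi\sim\mu\otimes\nu$ and passing to the log-identity $G(y)-F(x)=H(x)(y-x)$ a.e.\ with $F=f'-f$, $G=g-g'$, $H=h-h'$, you anchor at a single $x_0$ to make $G$ affine and then use that $\nu$ charges at least two points to force $H\equiv\alpha$ and $F(x)=\alpha x+\beta$. The only difference is cosmetic: where you invoke Fubini twice to select the good slice $x_0$ and the two points $y_1\neq y_2$, the paper instead cites the cross-section lemma of Beiglb\"ock--Goldstern--Maresch--Schachermayer (resp.\ Ghosal--Nutz--Bernton) to produce a full-measure set with a common anchor point; your elementary Fubini argument accomplishes the same thing here.
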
 

\begin{proof}
  Note that $\pi\sim \mu\otimes\nu$. Thus we have
  \begin{align}\label{eq:proofUnique1}
    f(x)+g(y)-h(x)(y-x)=f'(x)+g'(y)-h'(x)(y-x), \qquad (x,y)\in A
  \end{align} 
  for some $A\in\cB(\R\times\R)$ with $(\mu\otimes\nu)(A)=1$. In general, given any $A\in\cB(\R\times\R)$ with $(\mu\otimes\nu)(A)=1$, there exist a set $A_{0}\subseteq A$  with $(\mu\otimes\nu)(A_{0})=1$ and a point $(x_{0},y_{0})\in A_{0}$ such that for all $(x,y)\in A_{0}$, we have $(x_{0},y)\in A_{0}$ and $(x,y_{0})\in A_{0}$. The construction can be found in the proof of \cite[Lemma~4.3]{BeiglbockGoldsternMareschSchachermayer.09} or, more explicitly, in \cite[Lemma~4.5]{GhosalNutzBernton.21b}. Fixing $x=x_{0}$ in~\eqref{eq:proofUnique1}, it follows for all $y$ with $(x_{0},y)\in A_{0}$ (and hence $\nu$-a.s.) that
  \begin{align*}
    g(y)-g'(y) = f'(x_{0}) - f(x_{0}) + h(x_{0})(y-x_{0}) -h'(x_{0})(y-x_{0}).
  \end{align*} 
  In particular, $g(y)-g'(y)=\alpha y+\beta$ for some $\alpha,\beta\in\R$. 
  We deduce
    \begin{align*}
    f'(x) - f(x) + [h(x)-h'(x)](y-x) = \alpha y+\beta \qquad (\mu\otimes\nu)\as
  \end{align*} 
  As the support of $\nu$ includes at least two points, it follows that $h-h'= \alpha$ $\mu$-a.s. Then,  we also see that $f' - f - \alpha x =\beta$ $\mu$-a.s.
\end{proof}

Finally, we show the weak duality that was used in the proof of Corollary~\ref{co:duality}.

\begin{lemma}[Weak duality]\label{le:weakDuality}
Let $(\tilde f,\tilde g,\tilde h)\in\cD$ and $\pi\in\cM(\mu,\nu)$. Then
\begin{align*}
H(\pi | \mu\otimes\nu)
\geq \int \tilde f(x)\, \mu(dx) + \int \tilde g(y) \, \nu(dy) - \log \int e^{\tilde f(x)+\tilde g(y)-\tilde h(x)(y-x)}\, \mu(dx)\nu(dy).
\end{align*}
\end{lemma}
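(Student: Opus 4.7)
The plan is to apply the Fenchel--Young inequality for relative entropy. The elementary pointwise inequality $x y \leq e^{y} + x(\log x -1)$ for $x\geq0,y\in\R$ (which is just the Legendre relation between $x\log x - x$ and $e^{y}$) will do all the work, combined with the martingale property of $\pi$ to dispose of the awkward term $\tilde h(x)(y-x)$.

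First, I would dispense with the trivial cases. If $H(\pi|\mu\otimes\nu)=\infty$, there is nothing to prove. If $Z:=\int e^{\tilde f(x)+\tilde g(y)-\tilde h(x)(y-x)}\, \mu(dx)\nu(dy)=+\infty$, then $\log Z=+\infty$ and the right-hand side is $-\infty$, so again the inequality holds. It remains to treat the case where $H(\pi|\mu\otimes\nu)<\infty$ (hence $\pi\in\mathcal{M}_{\mathrm{fin}}(\mu,\nu)$ and $\pi\ll\mu\otimes\nu$) and $Z<\infty$.

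Set $\rho:=d\pi/d(\mu\otimes\nu)$ and $\phi(x,y):=\tilde f(x)+\tilde g(y)-\tilde h(x)(y-x) - \log Z$. Applying the Fenchel--Young inequality pointwise with $x=\rho(x,y)$ and $y=\phi(x,y)$ gives
\begin{align*}
\rho(x,y)\,\phi(x,y) \;\leq\; e^{\phi(x,y)} + \rho(x,y)\bigl(\log \rho(x,y) -1\bigr).
\end{align*}
Integrating against $\mu\otimes\nu$ (the right-hand side is a sum of measurable, well-defined integrals, the first being $Z^{-1}\int e^{\tilde f+\tilde g-\tilde h(y-x)}d(\mu\otimes\nu)=1$ and the second being $H(\pi|\mu\otimes\nu)-1$), one obtains
\begin{align*}
\int \phi \, d\pi \;\leq\; H(\pi\mid\mu\otimes\nu).
\end{align*}
The integral on the left is legitimate because the positive part of the integrand is controlled by the right-hand side above; in particular $\phi\in L^{1}(\pi)$ since $\tilde f\in L^{1}(\mu)\subseteq L^{1}(\pi)$, $\tilde g\in L^{1}(\nu)\subseteq L^{1}(\pi)$, and $\tilde h(x)(y-x)\in L^{1}(\pi)$ by the definition of the dual domain~$\cD$ together with $\pi\in\mathcal{M}_{\mathrm{fin}}(\mu,\nu)$.

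Finally, using that $\pi$ has marginals $\mu,\nu$, together with the martingale property and Fubini--Tonelli for kernels applied to the $\pi$-integrable function $\tilde h(x)(y-x)$, I obtain
\begin{align*}
\int \phi\, d\pi \;=\; \int \tilde f\, d\mu + \int \tilde g\, d\nu - \int \tilde h(x)(y-x)\,\pi(dx,dy) - \log Z \;=\; \int \tilde f\, d\mu + \int \tilde g\, d\nu - \log Z,
\end{align*}
which combined with the previous display yields precisely the claimed inequality. The only mildly delicate step is justifying $\int \tilde h(x)(y-x)\,d\pi=0$ via Fubini, but this is immediate from $\tilde h(x)(y-x)\in L^{1}(\pi)$ and the martingale relation $\int (y-x)\,\pi_{x}(dy)=0$ $\mu$-a.s., so I do not anticipate any genuine obstacle.
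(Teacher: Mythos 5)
Your proof is correct, and it reaches the desired inequality by a slightly different mechanism than the paper. The paper restricts the integral defining $Z$ to $\{\varphi>0\}$, $\varphi:=d\pi/d(\mu\otimes\nu)$, rewrites it as $\int e^{\tilde f(x)+\tilde g(y)-\tilde h(x)(y-x)-\log\varphi(x,y)}\,\pi(dx,dy)$ and applies Jensen's inequality to $-\log$; you instead invoke the pointwise Legendre (Fenchel--Young) relation $ab\le e^{b}+a\log a-a$ with $a=\rho(x,y)$, $b=\phi(x,y)$ and integrate against $\mu\otimes\nu$. The two routes are equivalent in substance (both are the standard one-step derivations of the Gibbs/entropy variational inequality) and of comparable length; your version has the minor merits of treating the degenerate cases $H(\pi|\mu\otimes\nu)=\infty$ and $Z=\infty$ explicitly and of avoiding the restriction to $\{\varphi>0\}$, at the price of having to justify that the integrated inequality is legitimate, which you do correctly: the positive part of $\rho\phi$ is dominated by the integrable right-hand side, and $\phi\in L^{1}(\pi)$ follows from $\tilde f\in L^{1}(\mu)$, $\tilde g\in L^{1}(\nu)$ and $\tilde h(x)(y-x)\in L^{1}(\pi)$ via the marginal constraints. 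The only step genuinely specific to this paper, namely eliminating the term $\tilde h(x)(y-x)$ by combining $\pi\in\mathcal{M}_{\text{fin}}(\mu,\nu)$ with the definition of $\cD$, Fubini for kernels and the martingale property to get $\int \tilde h(x)(y-x)\,\pi(dx,dy)=0$, is identical to the paper's argument.
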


\begin{proof}
Let $(\tilde f,\tilde g,\tilde h)\in\cD$ and $\pi\in\cM(\mu,\nu)$. We may assume that $H(\pi|\mu\otimes \nu)<\infty$, so that $\varphi:= \frac{d\pi}{d(\mu\otimes\nu)}$ exists with values in $[0,\infty)$.
We can write
\begin{align*}
\int  e^{\tilde f(x)+\tilde g(y)-\tilde h(x)(y-x)} \, \mu(dx)\nu(dy) &\geq \int  e^{\tilde f(x)+\tilde g(y)-\tilde h(x)(y-x)} \1_{\{\varphi>0\}}\, \mu(dx)\nu(dy)\\
&= \int  e^{\tilde f(x)+\tilde g(y)-\tilde h(x)(y-x) -\log\varphi(x,y)}\,\pi(dx,dy).
\end{align*}
Hence by Jensen's inequality,
\begin{align}\label{eq:jensen_rev}
\begin{split}
& -\log \int  e^{\tilde f(x)+\tilde g(y)-\tilde h(x)(y-x)} \, \mu(dx)\nu(dy)\\
& \leq \int  [-\tilde f(x)-\tilde g(y)+\tilde h(x)(y-x) + \log\varphi(x,y)] \, \pi(dx,dy) \\
& = -\int \tilde f(x)\, \mu(dx) - \int \tilde g(y) \, \nu(dy) +  H(\pi|\mu\otimes \nu),
\end{split}
\end{align}
where the last equality used $\pi\in \mathcal{M}(\mu,\nu)$ and the integrability properties of $(\tilde f,\tilde g,\tilde h)\in\cD$ under $\pi\in\cM_{\text{fin}}(\mu,\nu)$, as well as the definition of $H(\pi|\mu\otimes \nu)$.
\end{proof}

\section{Closing Remarks}\label{se:closing}

We conclude with an example regarding Remark~\ref{rk:irred} and Assumption~\ref{ass:1}. While the assumption is satisfied in the example, it shows that the integrability of $h$ is delicate when the potentials functions $\varphi_{\mu}$ and $\varphi_{\nu}$ are not uniformly separated.

\begin{example}\label{ex:strictSeparationHelps}
  Let $b\geq a:=1/2$ and $\nu=(\delta_{-b}+\delta_{b})/2$. Moreover, let $\mu\preceq_c\nu$ satisfy $\supp(\mu)=[-a,a]$ and $\mu(\{\pm a\})=0$. There exists a unique martingale coupling $\pi\in\mathcal{M}(\mu,\nu)$; namely, $\pi=\mu(dx)\otimes\pi_{x}(dy)$ for
  \begin{align*}
    \pi_{x} = \frac{b-x}{2b}\delta_{-b}+\frac{b+x}{2b}\delta_{b}.
  \end{align*} 
  In particular, Assumption~\ref{ass:0} is satisfied.
  Direct calculation yields that $\frac{d\pi}{d(\mu\otimes\nu)}(x,y)=\exp[f(x)+g(y)-h(x)(y-x)]$ for
    \begin{align*}
      f(x)= \log\left(\frac{b-x}{2b}\right) - \frac{b+x}{2b}\log\left(\frac{b-x}{b+x}\right), \qquad
      g(y)=0, \qquad
      h(x)=\frac{1}{2b}\log\left(\frac{b-x}{b+x}\right).
    \end{align*} 
    These functions are unique up to an affine transformation. The functions $f$ and $g$ are always bounded, whereas the function $h$ is bounded on $\supp(\mu)$ when $b>a$ but unbounded for $b=a$.
    
  Focusing on the boundary case $b=a$, then more specifically, $h$ is unbounded from below and from above, while $c(x,y):=h(x)(y-x)$ is bounded from below but unbounded from above. This leads to $ce^{-c}$ being bounded and in particular $c\in L^{1}(\pi)$. Assumption~\ref{ass:1} is satisfied due to the atoms of~$\nu$, hence $c\in L^{1}(\pi)$ also follows from Theorem~\ref{thm:main}.

Next, we specialize to~$\mu$ defined by
$$\frac{d\mu}{dx}=\frac{C}{(a-|x|) \log(a-|x|)^2}\1_{[-a,a]}(x),$$
where $C=-\log(a)/2$ is the normalizing constant.  Then we have $h\notin L^{1}(\mu)$ as
\begin{align*}
\int |h(x)|\,\mu(dx) &= -2\int_{0}^a \frac{1}{2a} \log(\frac{a-x}{a+x}) \frac{C}{(a-x) \log(a-x)^2}\,dx\\
&\ge -\frac{1}{a} \int_0^a \log( \frac{a-x}{a}) \frac{C}{(a-x) \log(a-x)^2}\,dx\\
&= \frac{1}{a} \log(a) \int_0^a \frac{C}{(a-x) \log(a-x)^2}\,dx -\frac{1}{a} \int_0^a \frac{C}{(a-x) \log(a-x)}\,dx\\
&= \frac{\log(a)}{2a} + \infty,
\end{align*}
and a similar calculation shows that $c(x,y):=h(x) (x-y)\notin L^{1}(\mu\otimes\nu)$.

In the case $b=a$, the potential functions $\varphi_{\mu}$ and $\varphi_{\nu}$ (cf.\ Remark~\ref{rk:irred}) are not uniformly separated. While the example is covered by Assumption~\ref{ass:1} due to the presence of atoms, the boundary case illustrates that the integrability of the potentials can be delicate.
\end{example}

\bibliographystyle{abbrv}
\bibliography{stochfin}

\end{document}